\providecommand{\algorithmname}{Algorithm}
\theoremstyle{plain}
\newtheorem{thm}{\protect\theoremname}
\theoremstyle{remark}
\newtheorem{rem}[thm]{\protect\remarkname}
\theoremstyle{plain}
\newtheorem{fact}[thm]{\protect\factname}
\theoremstyle{plain}
\newtheorem{lem}[thm]{\protect\lemmaname}
\newenvironment{proof}[1][\protect\proofname]{\par
	\normalfont\topsep6\p@\@plus6\p@\relax
	\trivlist
	\itemindent\parindent
	\item[\hskip\labelsep\scshape #1]\ignorespaces
}{%
	\endtrivlist\@endpefalse
}
\providecommand{\proofname}{Proof}
\theoremstyle{plain}
\newtheorem{cor}[thm]{\protect\corollaryname}
\author{ 
Zijian Liu\thanks{Stern School of Business, New York University, \texttt{zl3067@stern.nyu.edu}.}  
 \and 
Zhengyuan Zhou\thanks{Stern School of Business, New York University, \texttt{zzhou@stern.nyu.edu}.}}
\date{}
\providecommand{\corollaryname}{Corollary}
\providecommand{\factname}{Fact}
\providecommand{\lemmaname}{Lemma}
\providecommand{\remarkname}{Remark}
\providecommand{\theoremname}{Theorem}
\begin{document}
\global\long\def\E{\mathbb{\mathbb{E}}}%
\global\long\def\F{\mathcal{F}}%
\global\long\def\R{\mathbb{R}}%
\global\long\def\N{\mathbb{\mathbb{N}}}%
\global\long\def\bzero{\mathbb{\mathbf{\mathbf{0}}}}%
\global\long\def\na{\nabla}%
\global\long\def\indi{\mathds{1}}%
\global\long\def\dom{\mathcal{X}}%
\global\long\def\pa{\partial}%
\global\long\def\hp{\widehat{\partial}}%
\global\long\def\argmin{\mathrm{argmin}}%

\title{Stochastic Nonsmooth Convex Optimization with Heavy-Tailed Noises:
High-Probability Bound, In-Expectation Rate and Initial Distance Adaptation}
\maketitle
\begin{abstract}
Recently, several studies consider the stochastic optimization problem
but in a heavy-tailed noise regime, i.e., the difference between the
stochastic gradient and the true gradient is assumed to have a finite
$p$-th moment (say being upper bounded by $\sigma^{p}$ for some
$\sigma\geq0$) where $p\in(1,2]$, which not only generalizes the
traditional finite variance assumption ($p=2$) but also has been
observed in practice for several different tasks. Under this challenging
assumption, lots of new progress has been made for either convex or
nonconvex problems, however, most of which only consider smooth objectives.
In contrast, people have not fully explored and well understood this
problem when functions are nonsmooth. This paper aims to fill this
crucial gap by providing a comprehensive analysis of stochastic nonsmooth
convex optimization with heavy-tailed noises. We revisit a simple
clipping-based algorithm, whereas, which is only proved to converge
in expectation but under the additional strong convexity assumption.
Under appropriate choices of parameters, for both convex and strongly
convex functions, we not only establish the first high-probability
rates but also give refined in-expectation bounds compared with existing
works. Remarkably, all of our results are optimal (or nearly optimal
up to logarithmic factors) with respect to the time horizon $T$ even
when $T$ is unknown in advance. Additionally, we show how to make
the algorithm parameter-free with respect to $\sigma$, in other words,
the algorithm can still guarantee convergence without any prior knowledge
of $\sigma$. Furthermore, an initial distance adaptive convergence
rate is provided if $\sigma$ is assumed to be known.
\end{abstract}

\section{Introduction}

In this paper, we consider the constrained optimization problem
$\min_{\dom}F(x)$ where $F(x)$ is convex and Lipschitz and $\dom\subseteq\R^{d}$
is a closed convex set and possible to be $\R^{d}$. With a stochastic
oracle $\hp F(x)$ satisfying $\E[\hp F(x)\vert x]\in\pa F(x)$ where
$\pa F(x)$ denotes the set of subgradients at $x$, the classic algorithm,
stochastic gradient descent (SGD) \cite{robbins1951stochastic}, guarantees
a convergence rate of $O(T^{-\frac{1}{2}})$ in expectation after
$T$ iterations running under the finite variance condition for the
noise, i.e., $\E[\|\hp F(x)-\E[\hp F(x)\vert x]\|^{2}\vert x]\leq\sigma^{2}$
for some $\sigma\geq0$ representing the noise level. Due to the easy
implementation and empirical success of SGD, it has become one of
the most standard and popular algorithms for optimization problems
nowadays.

However, a huge part of studies, e.g., \cite{mirek2011heavy,simsekli2019tail,zhang2020adaptive,zhou2020towards,gurbuzbalaban2021fractional,camuto2021asymmetric,hodgkinson2021multiplicative},
points out that the finite variance assumption may be too optimistic
and is indeed violated in different machine learning tasks from empirical
observations. Instead, it is more proper to assume the bounded $p$-th
moment noise. Specifically, the noise is considered to satisfy $\E[\|\hp F(x)-\E[\hp F(x)\vert x]\|^{p}\vert x]\leq\sigma^{p}$
for some $p\in(1,2]$ and $\sigma\geq0$, which is known as heavy-tailed.
In particular, if $p=2$, this is exactly the finite variance assumption.
In contrast, the case of $p\in(1,2)$ is much more complicated as
the existing theory for vanilla SGD becomes invalid and the SGD algorithm
itself may fail to converge.

Following this new challenging and more realistic assumption, several
works propose different algorithms to overcome this problem. When
specialized to our case, i.e., a convex and Lipschitz objective, \cite{vural2022mirror}
is the first and the only one to provide an algorithm based on mirror
descent (MD) \cite{nemirovskij1983problem} achieving the in-expectation
rate of $O(T^{\frac{1-p}{p}})$, which matches the lower bound $\Omega(T^{\frac{1-p}{p}})$
\cite{nemirovskij1983problem,raginsky2009information,vural2022mirror}.
In addition, if strong convexity is assumed, \cite{zhang2020adaptive}
gives the first clipping-based algorithm to achieve the rate of $O(T^{\frac{2(1-p)}{p}})$
in expectation, in which a biased estimator $g=\min\left\{ 1,M/\|\hp F(x)\|_{2}\right\} \hp F(x)$
is used to deal with the heavy-tailed issue where $M>0$ denotes the
clipping magnitude. Besides, in the same work, they also establish
the first lower bound of $\Omega(T^{\frac{2(1-p)}{p}})$ to show their
algorithm is optimal. However, even though the two bounds in \cite{zhang2020adaptive,vural2022mirror}
are both optimal with respect to $T$, both of them are not adaptive
to $\sigma$. To be more precise, when $\sigma=0$, they can not recover
the optimal deterministic rates of $O(T^{-\frac{1}{2}})$ and $O(T^{-1})$
for convex and strongly convex cases respectively.

Despite two time-optimal in-expectation bounds have been proved for
our problem, another crucial part, the high-probability convergence
guarantee, still lacks, which turns out to be more helpful in describing
the convergence behavior for an individual running. Notably, if we
instead consider a smooth optimization problem (i.e., $F(x)$ is differentiable
and the gradient of $F(x)$ is Lipschitz), \cite{cutkosky2021high,sadiev2023high,nguyen2023high,liu2023breaking}
make different progress in both convex and nonconvex optimization.
Naturally, one may wonder whether the high-probability bounds can
also be proved in the Lipschitz case. Motivated by this important
gap, we give an affirmative answer to this question in this work by
considering the same clipping algorithm in \cite{zhang2020adaptive}
and show (nearly) optimal high-probability convergence rates for both
convex and strongly convex objectives. Remarkably, under the same
settings of parameters used for the high-probability bounds, we also
prove refined in-expectation rates that are adaptive to $\sigma$.
Hence, we give an exhaustive analysis for stochastic nonsmooth convex
optimization with heavy-tailed noises.

\subsection{Our Contributions}

We use a simple clipping algorithm to handle noises with only bounded
$p$-th moment for $p\in(1,2]$ and establish several new results
for different cases.
\begin{itemize}
\item When the function is assumed to be $G$-Lipschitz and convex
\begin{itemize}
\item We provide the first high-probability bound for the stochastic nonsmooth
convex optimization with heavy-tailed noises. Notably, our choices
of the clipping magnitude $M_{t}$ and step size $\eta_{t}$ are very
flexible. The corresponding rate $\widetilde{O}(T^{\frac{1-p}{p}})$
always matches the lower bound $\Omega(T^{\frac{1-p}{p}})$ only up
to logarithmic factors whenever the time horizon $T$ and the noise
level $\sigma$ is known or not. In other words, we give an any-time
bound that is parameter-free with respect to $\sigma$ at the same
time. Moreover, the bound will be adaptive to $\sigma$ when $\sigma$
is assumed to be known.\\
Besides, the dependence on the failure probability $\delta$ is $O(\log(1/\delta))$
in all the above cases. Our high-probability analysis is done in a
direct style in contrast to the induction-based proof in prior works,
which always leads to a sub-optimal dependence $O(\log(T/\delta))$.
It is also worth emphasizing that our proof neither makes any compact
assumption on the constrained set $\dom$ nor requires the knowledge
of the distance between $x_{1}$ and $x_{*}$ where $x_{1}$ and $x_{*}$
are the initial point and a local minimizer in $\dom$ respectively.
\item Under the same settings of $M_{t}$ and $\eta_{t}$ used in the proof
of high-probability bounds, we also show a nearly optimal in-expectation
convergence rate $\widetilde{O}(T^{\frac{1-p}{p}})$. To our best
knowledge, this is the first in-expectation bound for a clipping algorithm
under this problem. Especially, when $T$ is assumed to be known,
the extra logarithmic factors in $\widetilde{O}(T^{\frac{1-p}{p}})$
can be removed, which leads to the best possible rate of $O(T^{\frac{1-p}{p}})$.
Moreover, if both $\sigma$ and $T$ can be used to set $M_{t}$ and
$\eta_{t}$, the rate at this time will be $O((G+\sigma)T^{-\frac{1}{2}}+\sigma T^{\frac{1-p}{p}})$,
which is adaptive to $\sigma.$
\item When $\sigma$ and $\delta$ are assumed to be known in advance, we
also prove an initial distance adaptive bound under more careful choices
of parameters. More precisely, the dependence on the initial distance
is only in the order of $O(\log\frac{r+\|x_{1}-x_{*}\|}{r}(r+\|x_{1}-x_{*}\|))$
where $r>0$ can be any number rather than the traditional quadratic
bound $O(\alpha+\|x_{1}-x_{*}\|^{2}/\alpha)$ where $\alpha>0$ can
be viewed as the learning rate needed to be tuned. Moreover, the convergence
rate is still optimal in $T$ up to logarithmic factors, adaptive
to $\sigma$ and only has $O(\log(1/\delta))$ dependence on $\delta$.
\end{itemize}
\item When the objective is assumed to be $G$-Lipschitz and strongly convex
\begin{itemize}
\item We give the first high-probability convergence analysis and show an
optimal convergence rate $O(\log^{2}(1/\delta)((G^{2}+\sigma^{2})T^{-1}+(M^{2}+\sigma^{2p}M^{2-2p}+\sigma^{p}G^{2-p})T^{\frac{2(1-p)}{p}}))$
with probability at least $1-\delta$ where $M>0$ can be any real
number used to decide the clipping magnitude. The same as the convex
case, the choices of $M_{t}$ and $\eta_{t}$ don't require any prior
knowledge of the time horizon $T$ and the noise level $\sigma$,
either. Especially, when $\sigma$ is known, setting $M=\sigma$ leads
to the noise adaptive rate $O(\log^{2}(1/\delta)(G^{2}T^{-1}+(\sigma^{2}+\sigma^{p}G^{2-p})T^{\frac{2(1-p)}{p}}))$.
\item With the same $M_{t}$ and $\eta_{t}$ used for high-probability bounds,
our algorithm also guarantees a rate of $O((G^{2}+\sigma^{2})T^{-1}+(\sigma^{p}M^{2-p}+\sigma^{2p}M^{2-2p})T^{\frac{2(1-p)}{p}})$
in expectation where $M$ can be any positive real number. It is worth
mentioning that, unlike the convex case, such an $\sigma$-adaptive
rate doesn't need $\sigma$ to set $M_{t}$ and $\eta_{t}$.
\end{itemize}
\end{itemize}

\subsection{Related Work}

We review the literature related to nonsmooth convex optimization
with heavy-tailed noises. As for the heavy-tailed smooth problems
(either convex or non-convex), the reader can refer to \cite{csimcsekli2019heavy,cutkosky2021high,wang2021convergence,jakovetic2022nonlinear,sadiev2023high,nguyen2023high,liu2023breaking}
for recent progress.

\textbf{High-probability convergence with heavy-tailed noises:} As
far as we know, there doesn't exist any prior work establishing the
high-probability convergence rate when considering nonsmooth convex
(or strongly convex) optimization with heavy-tailed noises. However,
a previous paper \cite{zhang2022parameter} is very close to our problem,
in which the authors focus on online nonsmooth convex optimization
and present an algorithm with a provable high-probability convergence
bound to address heavy-tailed noises. One can employ their algorithm
to solve our problem as online convex optimization is more general.
But there are still lots of differences between our work and \cite{zhang2022parameter}.
A comprehensive comparison can be found in Section \ref{sec: algo}.

If only considering the finite variance case (i.e., $p=2$) with convex
objectives, \cite{parletta2022high} provides a high-probability bound
$O(\log(1/\delta)T^{-\frac{1}{2}})$ where $\delta$ is the failure
probability. But they require a bounded domain $\dom$ in the proof,
which is a restrictive assumption and significantly simplifies the
analysis. \cite{gorbunov2021near} is the first to show a high-probability
rate when the domain of the problem is $\R^{d}$. Whereas to set up
the parameters in their algorithm, the initial distance $\|x_{1}-x_{*}\|_{2}$
(or any upper bound on it) needs to be known, which is also hard to
estimate when $\dom$ is unbounded. Besides, the dependence on $\delta$
in \cite{gorbunov2021near} is sub-optimal $\log(T/\delta)$.

\textbf{In-expectation convergence with heavy-tailed noises: }For
the Lipschitz convex problem, \cite{vural2022mirror} is the first
and the only work to show an $O(T^{\frac{1-p}{p}})$ rate under the
framework of MD. However, unlike the popular clipping method, their
algorithm is based on the uniform convexity property (see Definition
1 in \cite{vural2022mirror}) of the mirror map. Therefore, an in-expectation
bound for the clipping-based algorithm still lacks in this case. When
the functions are additionally considered to be strongly convex, \cite{zhang2020adaptive}
is the first and the only work to prove an $O(T^{\frac{2(1-p)}{p}})$
convergence rate in expectation by combining SGD and clipped gradients.

However, we would like to mention that both \cite{vural2022mirror}
and \cite{zhang2020adaptive} only assume that $\E[\|\widehat{\pa}F(x)\|^{p}\vert x]$
is uniformly bounded by a constant $C^{p}$ for some $C>0$ unlike
our assumption of $\E[\|\hp F(x)-\E[\widehat{\pa}F(x)\vert x]\|^{p}\vert x]\leq\sigma^{p}$.
This difference can let us obtain a refined in-expectation bound,
which is able to be adaptive to $\sigma$. In other words, when $\sigma=0$,
our in-expectation bounds will automatically recover the well-known
optimal rates of $O(T^{-\frac{1}{2}})$ and $O(T^{-1})$ for convex
and strongly convex cases respectively. We refer the reader to Section
\ref{sec: algo} for more detailed comparisons with these two previous
works.

\textbf{Lower bound with heavy-tailed noises: }When the noises only
have $p$-th finite moment for some $p\in(1,2]$, for the convex functions,
\cite{nemirovskij1983problem,raginsky2009information,vural2022mirror}
show that the convergence rate of any first-order algorithm cannot
be faster than $\Omega(T^{\frac{1-p}{p}})$. If the strong convexity
is additionally assumed, \cite{zhang2020adaptive} is the first to
establish a lower bound of $\Omega(T^{\frac{2(1-p)}{p}})$.

\section{Preliminaries\label{sec:Preliminaries}}

\textbf{Notations}: Let $\left[d\right]$ denote the set $\left\{ 1,2,\cdots,d\right\} $
for any integer $d\geq1$. $\langle\cdot,\cdot\rangle$ is the standard
Euclidean inner product on $\R^{d}$ and $\|\cdot\|$ represents the
$\ell_{2}$ norm. $\mathrm{int}(A)$ stands for the interior points
of any set $A\subseteq\R^{d}$. Given a closed and convex set $C$,
$\Pi_{C}(\cdot)$ is the projection operator onto $C$, i.e., $\Pi_{C}(x)=\argmin_{z\in C}\|z-x\|$.
$a\lor b$ and $a\land b$ are defined as $\max\left\{ a,b\right\} $
and $\min\left\{ a,b\right\} $ respectively. Given a function $f$,
$\partial f(x)$ denotes the set of subgradients at $x$.

We focus on the following optimization problem in this work:

\textbf{
\[
\min_{x\in\dom}F(x)
\]
}where $F$ is convex and $\dom\subseteq\mathrm{int}(\mathrm{dom}(F))\subseteq\R^{d}$
is a closed convex set. The requirement of $\dom\subseteq\mathrm{int}(\mathrm{dom}(F))$
is only to guarantee the existence of subgradients for every point
in $\dom$ with no other special reason. We remark that there is no
compactness assumption on $\dom$. Additionally, our analysis relies
on the following assumptions

\textbf{1. Existence of a local minimizer}: $\exists x_{*}\in\arg\min_{x\in\dom}F(x)$
satisfying $F(x_{*})>-\infty$.

\textbf{2. }$\mu$\textbf{-strongly convex}: $\exists\mu\geq0$ such
that $F(x)\geq F(y)+\langle g,x-y\rangle+\frac{\mu}{2}\|x-y\|^{2},\forall x,y\in\dom,g\in\pa F(y)$.

\textbf{3. }$G$\textbf{-Lipschitz}: $\exists G>0$ such that $\|g\|\leq G,\forall x\in\dom,g\in\pa F(x)$.

\textbf{4. Unbiased gradient estimator}: We can access a history-independent,
unbiased gradient estimator $\hp F(x)$ for any $x\in\dom$, i.e.,
$\E[\widehat{\pa}F(x)\vert x]\in\pa F(x),\forall x\in\dom$.

\textbf{5. Bounded $p$-th moment noise}: There exist $p\in(1,2]$
and $\sigma\geq0$ denoting the noise level such that $\E[\|\hp F(x)-\E[\widehat{\pa}F(x)\vert x]\|^{p}\vert x]\leq\sigma^{p}$.

We briefly discuss the assumptions here. Assumptions 1-3 are standard
in the nonsmooth convex optimization literature. For Assumption 2,
the objective will degenerate to the convex function when $\mu=0$.
Assumption 4 is commonly used in stochastic optimization. Assumption
5 is the definition of heavy-tailed noise. Lastly, we would like to
mention that the reason for using the $\ell_{2}$ norm is only for
convenience. When considering a general norm, similar results to our
theorems (except Theorem \ref{thm:lip-dog-prob}) still hold after
changing the algorithmic framework into MD. A more detailed discussion
will be given in Section \ref{sec:general-norm} in the appendix.

\section{Algorithm and its Convergence Guarantee\label{sec: algo}}

\begin{algorithm}[h]
\caption{\label{alg:algo}Projected SGD with Clipping}

\textbf{Input}: $x_{1}\in\dom$, $M_{t}>0$, $\eta_{t}>0$.

\textbf{for} $t=1$ \textbf{to} $T$ \textbf{do}

$\quad$$g_{t}=\left(1\land\frac{M_{t}}{\left\Vert \hp F(x_{t})\right\Vert }\right)\hp F(x_{t})$

$\quad$$x_{t+1}=\Pi_{\dom}(x_{t}-\eta_{t}g_{t}).$

\textbf{end for}
\end{algorithm}

The projected clipped SGD algorithm is shown in Algorithm \ref{alg:algo}.
The algorithm itself is simple to understand. Compared with SGD, the
only difference is to clip the stochastic gradient $\hp F(x_{t})$
with a threshold $M_{t}$. In the next two sections, we will show
that properly picked $M_{t}$ and $\eta_{t}$ guarantee both high-probability
and in-expectation convergence for Algorithm \ref{alg:algo}. Again,
we remark that our results in Sections \ref{subsec:convex} and \ref{subsec:str-convex}
can be extened to any norm $\|\cdot\|$ on $\R^{d}$. Theorem \ref{thm:lip-dog-prob}
in \ref{subsec:dog} still holds when changing the $\ell_{2}$ norm
to the Mahalanobis norm, i.e., $\|x\|=\sqrt{x^{\top}Ax}$ for $A\succ0$.

\subsection{General Convergence Theorems When $\mu=0$\label{subsec:convex}}

In this section, we present the convergence theorems of Algorithm
\ref{alg:algo} for convex functions, i.e., $\mu=0$.

First, when $T$ is not assumed to be known, Theorem \ref{thm:lip-prob}
gives any-time high-probability convergence bounds for two cases,
i.e., whether the noise level $\sigma$ is known or not. As far as
we know, Theorem \ref{thm:lip-prob} is the first to describe an any-time
high-probability convergence rate for nonsmooth convex optimization
problems when the noise is assumed to be heavy-tailed.
\begin{thm}
\label{thm:lip-prob}Suppose Assumptions (1)-(5) hold with $\mu=0$
and let $\bar{x}_{T}=\frac{1}{T}\sum_{t=1}^{T}x_{t}$. Under the choices
of $M_{t}=2G\lor Mt^{\frac{1}{p}}$ where $M\geq0$ can be any real
number and $\eta_{t}=\frac{\alpha}{G\sqrt{t}}\land\frac{\alpha}{M_{t}}$
where $\alpha=\frac{\beta}{\log(4/\delta)}$ and $\beta>0$ can be
any real number, for any $T\geq1$ and $\delta\in(0,1)$, the following
bound holds with probability at least $1-\delta$,
\[
F(\bar{x}_{T})-F(x_{*})\leq O\left(\left(\beta\left(1+(\sigma/M)^{2p}\right)\log^{2}T+\log(1/\delta)\left(\beta+\frac{\left\Vert x_{1}-x_{*}\right\Vert ^{2}}{\beta}\right)\right)\left(\frac{G}{\sqrt{T}}\lor\frac{M}{T^{\frac{p-1}{p}}}\right)\right).
\]
Especially, by setting $M=\sigma$ when $\sigma$ is known, we have
\[
F(\bar{x}_{T})-F(x_{*})\leq O\left(\left(\beta\log^{2}T+\log(1/\delta)\left(\beta+\frac{\left\Vert x_{1}-x_{*}\right\Vert ^{2}}{\beta}\right)\right)\left(\frac{G}{\sqrt{T}}\lor\frac{\sigma}{T^{\frac{p-1}{p}}}\right)\right).
\]
\end{thm}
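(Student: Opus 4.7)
The plan is to begin from the standard projected-SGD descent relation
\[
\|x_{t+1}-x_*\|^{2}\le\|x_{t}-x_*\|^{2}-2\eta_{t}\langle g_{t},x_{t}-x_*\rangle+\eta_{t}^{2}\|g_{t}\|^{2},
\]
then decompose $g_{t}=\na F(x_{t})+b_{t}+n_{t}$, where $b_{t}:=\E[g_{t}\mid\F_{t-1}]-\na F(x_{t})$ is the clipping bias and $n_{t}:=g_{t}-\E[g_{t}\mid\F_{t-1}]$ is mean-zero, use convexity to lower bound $\langle\na F(x_{t}),x_{t}-x_*\rangle$ by $F(x_{t})-F(x_*)$, multiply by $\eta_{t}$, telescope, and finally apply Jensen on the left to pass to $\bar{x}_{T}$. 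After rearrangement this reduces the task to controlling four pieces: the initial distance $\|x_{1}-x_*\|^{2}$, the step-squared sum $\sum_{t}\eta_{t}^{2}\|g_{t}\|^{2}$, the bias contribution $\sum_{t}\eta_{t}\langle b_{t},x_*-x_{t}\rangle$, and the martingale $Z_{T}:=\sum_{t}\eta_{t}\langle n_{t},x_*-x_{t}\rangle$.

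Two standard clipping-moment bounds will feed this. Since $M_{t}\ge2G\ge2\|\na F(x_{t})\|$, a Markov argument on the noise tail yields $\|b_{t}\|\lesssim\sigma^{p}/M_{t}^{p-1}$, while $\|n_{t}\|\le2M_{t}$ and $\E_{t-1}[\|n_{t}\|^{2}]\lesssim(G^{p}+\sigma^{p})M_{t}^{2-p}$, using $\|g_{t}\|^{2}\le M_{t}^{2-p}\|g_{t}\|^{p}$ together with $\E[\|\hp F(x_{t})\|^{p}\mid x_{t}]\lesssim G^{p}+\sigma^{p}$. The bias inner product is then split by Young, $\eta_{t}\langle b_{t},x_*-x_{t}\rangle\le\gamma\eta_{t}\|x_*-x_{t}\|^{2}+\eta_{t}\|b_{t}\|^{2}/(4\gamma)$, with $\gamma$ chosen small enough that the trajectory piece is absorbed by the descent, leaving only $\sigma^{2p}\eta_{t}/M_{t}^{2(p-1)}$ to pay for in the residual.

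The hard part will be concentrating $Z_{T}$ (and the centered $\sum_{t}\eta_{t}^{2}(\|g_{t}\|^{2}-\E_{t-1}\|g_{t}\|^{2})$) without any diameter hypothesis on $\dom$, because both $Z_{T}$'s predictable variance $\sum_{t}\eta_{t}^{2}\|x_{t}-x_*\|^{2}\E_{t-1}[\|n_{t}\|^{2}]$ and its per-step range $2\eta_{t}M_{t}\|x_{t}-x_*\|\le2\alpha\|x_{t}-x_*\|$ are trajectory-dependent; this is exactly what forces the induction-plus-union-bound proofs of earlier works to pay an extra $\log T$ in the failure probability. In place of that, I would build a single exponential super-martingale of the form
\[
\Phi_{t}=\exp\Bigl(\lambda\|x_{t+1}-x_*\|^{2}+2\lambda\sum_{s\le t}\eta_{s}(F(x_{s})-F(x_*))-\lambda R_{t}\Bigr),
\]
where $R_{t}$ aggregates the deterministic penalties and $\lambda\asymp1/\alpha$, and verify $\E_{t-1}[\Phi_{t}]\le\Phi_{t-1}$ by a Bernstein-type conditional MGF estimate $\E_{t-1}[\exp(2\lambda\eta_{t}\langle n_{t},x_*-x_{t}\rangle)]\le\exp(C\lambda^{2}\eta_{t}^{2}\|x_{t}-x_*\|^{2}M_{t}^{2-p}(G^{p}+\sigma^{p}))$, which is valid precisely when $\lambda\eta_{t}M_{t}\le1$; this is what the scaling $\alpha=\beta/\log(4/\delta)$ is engineered to guarantee. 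Ville's inequality then gives $\sup_{t}\Phi_{t}\le1/\delta$ with probability $1-\delta$, producing the $O(\log(1/\delta))$ rather than $O(\log(T/\delta))$ deviation advertised in the introduction. Plugging in $M_{t}=2G\lor Mt^{1/p}$ and $\eta_{t}=\alpha/(G\sqrt{t})\land\alpha/M_{t}$ and evaluating $\sum_{t}\eta_{t}\asymp1/(G/\sqrt{T}\lor M/T^{(p-1)/p})$, $\sum_{t}\eta_{t}^{2}M_{t}^{2-p}\lesssim\alpha^{2}\log T/M^{p}$, and $\sum_{t}\eta_{t}\sigma^{2p}/M_{t}^{2(p-1)}\lesssim(\sigma/M)^{2p}\log T\cdot\sum_{t}\eta_{t}$ then consolidates to the claimed rate, with the $M=\sigma$ specialization collapsing $(\sigma/M)^{2p}$ to unity.
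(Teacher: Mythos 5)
Your high-level decomposition (bias plus mean-zero noise plus true subgradient, telescope the one-step descent, control the bias via Young's inequality and the martingale via a time-uniform concentration bound) matches the paper's skeleton, and you have correctly pinpointed the hard part: without a diameter assumption, both the range and the predictable variance of the martingale $\sum_{t}\eta_{t}\langle\xi_{t}^{u},x_{*}-x_{t}\rangle$ are trajectory-dependent. However, the proposed fix does not actually close this gap. Your Bernstein-type conditional MGF claim
\[
\E_{t-1}\!\left[\exp\!\left(2\lambda\eta_{t}\langle n_{t},x_{*}-x_{t}\rangle\right)\right]\le\exp\!\left(C\lambda^{2}\eta_{t}^{2}\|x_{t}-x_{*}\|^{2}M_{t}^{2-p}(G^{p}+\sigma^{p})\right)
\]
is \emph{not} valid under the condition $\lambda\eta_{t}M_{t}\le1$ as you state. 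The per-step range of $2\eta_{t}\langle n_{t},x_{*}-x_{t}\rangle$ is $R_{t}=4\eta_{t}M_{t}\|x_{t}-x_{*}\|$, so a Bennett--Bernstein MGF control of the quadratic form you wrote requires $\lambda R_{t}=4\lambda\eta_{t}M_{t}\|x_{t}-x_{*}\|\lesssim1$, which still involves the unbounded quantity $\|x_{t}-x_{*}\|$ that the super-martingale is supposed to prove bounded. Choosing $\alpha=\beta/\log(4/\delta)$ does nothing to control this factor. (An Azuma--Hoeffding MGF bound avoids the range condition but produces a per-step cost $\Theta(\lambda^{2}\alpha^{2}\|x_{t}-x_{*}\|^{2})$ that the potential cannot absorb when summed over $T$ steps without losing the target rate.) So as written, the argument is circular at exactly the step that was flagged as the hard one.

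The paper resolves this in a structurally different way that you may want to adopt: instead of centering an exponential potential, it divides the one-step descent inequality by $\mathfrak{D}_{t}:=\left(\max_{s\in[t]}\|x_{s}-x_{*}\|\right)\lor\alpha$ \emph{before} summing (Lemma~\ref{lem:basic-lip-prob}). Because $\|x_{*}-x_{t}\|/\mathfrak{D}_{t}\le1$, the resulting martingale increments $\eta_{t}\langle\xi_{t}^{u},(x_{*}-x_{t})/\mathfrak{D}_{t}\rangle$ are a.s.\ bounded by $2\eta_{t}M_{t}\le2\alpha$ with \emph{no} residual $\|x_{t}-x_{*}\|$ factor, and then a single application of a time-uniform Freedman inequality (Corollary~\ref{cor:ez-any-freedman-1}) delivers the $O(\log(1/\delta))$ dependence for all $\tau\in[T]$ simultaneously. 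The normalization is undone afterward via $\mathfrak{D}_{\tau}\cdot(\cdots)\le\frac{1}{2}(\mathfrak{D}_{\tau}^{2}+(\cdots)^{2})\le\frac{1}{2}(D_{\tau}^{2}+\alpha^{2}+(\cdots)^{2})$, and a one-line induction on $d_{\tau+1}^{2}\le\frac{1}{2}(D_{\tau}^{2}+K)$ with $\Delta_{t}\ge0$ gives $D_{t}^{2}\le K$. This normalize-then-concentrate-then-undo pattern is the missing ingredient in your plan, and replacing the exponential super-martingale with it would make the argument go through.
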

\begin{rem}
The choice of $\alpha=\frac{\beta}{\log(4/\delta)}$ is only for optimizing
the dependence on $\log(1/\delta)$. Our theoretical analysis works
for any $\alpha>0.$ Additionally, it is possible to choose $\eta_{t}=\frac{\alpha_{1}}{G\sqrt{t}}\land\frac{\alpha_{2}}{M_{t}}$
for different $\alpha_{1},\alpha_{2}>0$. However, we keep the same
$\alpha$ for simplicity in Theorem \ref{thm:lip-prob} and the following
Theorems \ref{thm:lip-prob-fix}, \ref{thm:lip-exp} and \ref{thm:lip-exp-fix}.
\end{rem}
We note that whenever $\sigma$ is known or not, our choice always
leads to the (nearly) optimal rate $\widetilde{O}(T^{\frac{1-p}{p}})$
in $T$. Moreover, if we assume $\sigma$ is known and consider the
choice of $M_{t}=2G\lor\sigma t^{\frac{1}{p}}$ when $\sigma=0$,
in other words, the deterministic case, the clipping magnitude $M_{t}$
will be $M_{t}=2G$ and the step size $\eta_{t}$ is $\eta_{t}=\frac{\alpha}{G\sqrt{t}}\land\frac{\alpha}{2G}=O(\frac{\alpha}{G\sqrt{t}})$.
Recall that the norm of any subgradient is bounded by $G$, which
implies $M_{t}=2G$ won't have any effect now. Hence, the algorithm
will be the totally same as the traditional Projected SGD. The corresponding
bound will also be the (nearly) optimal rate $\widetilde{O}(G/\sqrt{T})$.
We would like to emphasize that the appearance of the term $\log^{2}T$
is due to the time-varying step size rather than the analysis technique
for the high-probability bound. Notably, the dependence on $\delta$
is only $\log(1/\delta)$ rather than the sub-optimal $\log(T/\delta)$
in previous works.

Next, in Theorem \ref{thm:lip-prob-fix}, we state the fixed time
bound, i.e., the case of known $T$. As mentioned above, the extra
term $\log^{2}T$ will be removed.
\begin{thm}
\label{thm:lip-prob-fix}Suppose Assumptions (1)-(5) hold with $\mu=0$
and let $\bar{x}_{T}=\frac{1}{T}\sum_{t=1}^{T}x_{t}$. Additionally,
assume $T$ is known. Under the choices of $M_{t}=2G\lor MT^{\frac{1}{p}}$
where $M\geq0$ can be any real number and $\eta_{t}=\frac{\alpha}{G\sqrt{T}}\land\frac{\alpha}{M_{t}}$
where $\alpha=\frac{\beta}{\log(4/\delta)}$ and $\beta>0$ can be
any real number, for any $T\geq1$ and $\delta\in(0,1)$, the following
bound holds with probability at least $1-\delta$,
\[
F(\bar{x}_{T})-F(x_{*})\leq O\left(\left(\beta(\sigma/M)^{2p}+\log(1/\delta)\left(\beta+\frac{\left\Vert x_{1}-x_{*}\right\Vert ^{2}}{\beta}\right)\right)\left(\frac{G}{\sqrt{T}}\lor\frac{M}{T^{\frac{p-1}{p}}}\right)\right).
\]
Especially, by setting $M=\sigma$ when $\sigma$ is known, we have
\[
F(\bar{x}_{T})-F(x_{*})\leq O\left(\log(1/\delta)\left(\beta+\frac{\left\Vert x_{1}-x_{*}\right\Vert ^{2}}{\beta}\right)\left(\frac{G}{\sqrt{T}}\lor\frac{\sigma}{T^{\frac{p-1}{p}}}\right)\right).
\]
\end{thm}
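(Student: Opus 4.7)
The plan is to mirror the strategy used for Theorem~\ref{thm:lip-prob}, exploiting the fact that with $T$ known the clipping magnitude $M_{T}:=2G\lor MT^{\frac{1}{p}}$ and step size $\eta:=\frac{\alpha}{G\sqrt{T}}\land\frac{\alpha}{M_{T}}$ are now constant in $t$. This constancy is precisely what eliminates the $\log^{2}T$ factor appearing in Theorem~\ref{thm:lip-prob}, whose origin was an Abel-summation argument over time-varying parameters; here a straight telescoping works.

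First I would write the standard projected SGD one-step inequality
\[
\|x_{t+1}-x_{*}\|^{2}\le\|x_{t}-x_{*}\|^{2}-2\eta\langle g_{t},x_{t}-x_{*}\rangle+\eta^{2}\|g_{t}\|^{2},
\]
decompose the clipped gradient as $g_{t}=\na F(x_{t})+b_{t}+\epsilon_{t}$, where $b_{t}:=\E[g_{t}\mid x_{t}]-\na F(x_{t})$ is the clipping bias and $\epsilon_{t}:=g_{t}-\E[g_{t}\mid x_{t}]$ the martingale noise, then use convexity of $F$ and Jensen's inequality on $\bar{x}_{T}$ to obtain
\[
F(\bar{x}_{T})-F(x_{*})\le\frac{\|x_{1}-x_{*}\|^{2}}{2T\eta}+\frac{\eta}{2T}\sum_{t=1}^{T}\|g_{t}\|^{2}-\frac{1}{T}\sum_{t=1}^{T}\langle\epsilon_{t},x_{t}-x_{*}\rangle-\frac{1}{T}\sum_{t=1}^{T}\langle b_{t},x_{t}-x_{*}\rangle.
\]

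I would then bound each of the four pieces. The deterministic initial-distance piece $\|x_{1}-x_{*}\|^{2}/(2T\eta)$ combined with $1/\eta\le\max(G\sqrt{T},M_{T})/\alpha$, $\alpha=\beta/\log(4/\delta)$, and $M_{T}/T\lesssim M/T^{\frac{p-1}{p}}$ already yields the $\log(1/\delta)\|x_{1}-x_{*}\|^{2}/\beta$ contribution. Because $M_{T}\ge2G\ge2\|\na F(x_{t})\|$, the standard bias calculation under Assumption~5 gives $\|b_{t}\|\le\sigma^{p}/M_{T}^{p-1}$, and the bias term is handled by Cauchy--Schwarz together with an AM--GM step that amortizes a $\|x_{t}-x_{*}\|$ factor into the telescoping part. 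For $\eta\sum\|g_{t}\|^{2}$ I would use $\|g_{t}\|^{2}\le M_{T}^{2-p}\|g_{t}\|^{p}$ with $\E[\|g_{t}\|^{p}\mid x_{t}]\le 2^{p-1}(G^{p}+\sigma^{p})$ and a Bernstein-type tail bound driven by the almost-sure cap $\|g_{t}\|\le M_{T}$; substituting $\eta\le\alpha/M_{T}$ and collecting constants produces the $\beta(\sigma/M)^{2p}$-shaped contribution.

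The main obstacle, as in Theorem~\ref{thm:lip-prob}, is controlling the martingale sum $\sum_{t}\langle\epsilon_{t},x_{t}-x_{*}\rangle$ when $\dom$ is not assumed compact, so that $\|x_{t}-x_{*}\|$ is unbounded a priori. To avoid an inductive argument that would blow the dependence up to $\log(T/\delta)$, I would apply a Freedman-style inequality directly, using conditional variance proxy $\|x_{t}-x_{*}\|^{2}\E[\|\epsilon_{t}\|^{2}\mid x_{t}]$ and almost-sure envelope $2M_{T}\|x_{t}-x_{*}\|$, and then pick the free parameter $\lambda$ so that the resulting variance term $\lambda\sum_{t}\E[\|\epsilon_{t}\|^{2}\mid x_{t}]\|x_{t}-x_{*}\|^{2}$ is absorbed into the telescoping quantity $\frac{1}{2\eta T}\sum_{t}(\|x_{t}-x_{*}\|^{2}-\|x_{t+1}-x_{*}\|^{2})$ already present on the right-hand side. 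The surviving $\log(1/\delta)/\lambda$ deviation, multiplied by the variance prefactor, collapses after the choice $\alpha=\beta/\log(4/\delta)$ into the $\log(1/\delta)\beta\,(G/\sqrt{T}\lor M/T^{\frac{p-1}{p}})$ term of the claimed bound. Combining the four estimates yields the stated rate, and specializing $M=\sigma$ produces the noise-adaptive second form.
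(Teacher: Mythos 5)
The central obstacle you correctly identify --- that $\|x_{t}-x_{*}\|$ has no a priori bound --- is exactly where your argument breaks down, and the fix you propose does not work. You say you would apply a Freedman-type inequality to $\sum_{t}\langle\epsilon_{t},x_{t}-x_{*}\rangle$ using the ``almost-sure envelope $2M_{T}\|x_{t}-x_{*}\|$'' and conditional variance proxy $\|x_{t}-x_{*}\|^{2}\E[\|\epsilon_{t}\|^{2}\mid x_{t}]$. Both quantities are random and unbounded, whereas Freedman's inequality (Lemma~\ref{lem:freedman}) requires a deterministic almost-sure bound $R$ and a deterministic variance cap $F$. Your subsequent ``pick $\lambda$ and absorb $\lambda\sum_{t}\|x_{t}-x_{*}\|^{2}\E_{t}[\|\epsilon_{t}\|^{2}]$'' step is also not available: since $\mu=0$, the one-step inequality telescopes to $d_{1}^{2}-d_{T+1}^{2}$ and produces no running $-\sum_{t}d_{t}^{2}$ term into which such a quantity could be absorbed. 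That absorption trick belongs to the strongly convex case.

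The paper's proof of Theorem~\ref{thm:lip-prob-fix} (and of Theorem~\ref{thm:lip-prob}, which it tracks) resolves this precisely by first dividing the one-step inequality by $\mathfrak{D}_{t}=\alpha\lor\max_{s\le t}d_{s}$, a predictable, monotone normalizer, before summing (Lemma~\ref{lem:basic-lip-prob}). Because $d_{t}/\mathfrak{D}_{t}\le1$ and $\eta_{t}M_{t}\le\alpha$, the normalized increments $\eta_{t}\langle\xi_{t}^{u},(x_{*}-x_{t})/\mathfrak{D}_{t}\rangle$ then obey a deterministic almost-sure bound $2\alpha$ and a deterministic conditional-variance sum, so Freedman applies directly and yields $O(\alpha\log(1/\delta))$. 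Multiplying back by $\mathfrak{D}_{\tau}$ and observing $\mathfrak{D}_{\tau}\cdot(\text{const})\le\frac{1}{2}(D_{\tau}^{2}+K)$ gives an inequality $d_{\tau+1}^{2}\le\frac{1}{2}(D_{\tau}^{2}+K)$ that a trivial induction turns into $D_{T}^{2}\le K$, from which the final bound follows. This normalization step is the missing idea in your proposal; without it there is no route to a deterministic envelope, and the argument you sketched cannot be completed as written. (A minor point: the objective is only Lipschitz, so your $\nabla F(x_{t})$ should be a subgradient $\pa_{t}\in\pa F(x_{t})$, but that is cosmetic.)
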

To finish the high-probability bounds, we would like to make a comprehensive
comparison with \cite{zhang2022parameter}, which is the only existing
work showing a high-probability bound of $\widetilde{O}(\epsilon\log(1/\delta)T^{-1}+(\sigma+G)\log(T/\delta)\log(\|x_{1}-x_{*}\|T/\epsilon)\|x_{1}-x_{*}\|T^{\frac{1-p}{p}})$
(where $\epsilon>0$ is any user-specified parameter and $\|\cdot\|$
is the Mahalanobis norm\footnote{The bound in \cite{zhang2022parameter} is proved in the Hilbert space.
Hence, $\|\cdot\|$ will be the Mahalanobis norm when specialized
to $\R^{d}$.}) in the related literature. We need to emphasize our work is different
in several aspects.
\begin{enumerate}
\item The algorithm in \cite{zhang2022parameter} is much more complicated
than ours. To be more precise, their main algorithm needs to call
several outer algorithms. The outer algorithms themselves are even
very involved. This difference is because we only focus on convex
optimization, in contrast, their algorithm is designed for online
convex optimization, which is known to be more general. Hence, when
only considering solving the heavy-tailed nonsmooth convex optimization,
we believe our algorithm is much easier to be implemented.
\item When choosing $M_{t}$ and $\eta_{t}$, \cite{zhang2022parameter}
requires not only the time horizon $T$ but also the noise $\sigma$,
which means their result is neither an any-time bound nor parameter-free
with respect to $\sigma$. In comparison, our Theorem \ref{thm:lip-prob}
doesn't require $T$. We also show how to set $M_{t}$ and $\eta_{t}$
when $\sigma$ is unknown. Besides, the dependence on $\delta$ in
both Theorems \ref{thm:lip-prob} and \ref{thm:lip-prob-fix} is always
$O(\log(1/\delta))$, which is significantly better than $O(\log(T/\delta))$
in \cite{zhang2022parameter}.
\item However, our result is not as good as \cite{zhang2022parameter} for
the dependence on the initial distance $\|x_{1}-x_{*}\|$. As one
can see, our obtained bound is always in the form of $O(\beta+\|x_{1}-x_{*}\|^{2}/\beta)$,
which is worse than $\|x_{1}-x_{*}\|\log(\|x_{1}-x_{*}\|)$ in \cite{zhang2022parameter}.
To deal with this issue, a high-probability bound, $\widetilde{O}(\log(\|x_{1}-x_{*}\|/r)(r+\|x_{1}-x_{*})(G\log(1/\delta)T^{-\frac{1}{2}}+\sigma\log(1/\delta)^{1-\frac{1}{p}}T^{\frac{1-p}{p}}))$
where $r>0$ can be any number, is provided in Theorem \ref{thm:lip-dog-prob}
Section \ref{subsec:dog}. As a tradeoff, compared with Theorems \ref{thm:lip-prob}
and \ref{thm:lip-prob-fix}, Theorem \ref{thm:lip-dog-prob} needs
to assume a known $\sigma$ and can at most be applied to the same
Mahalanobis norm used in \cite{zhang2022parameter} 
\item Finally, it is worth pointing out that our proof techniques are very
different from \cite{zhang2022parameter}. Our analysis is done in
a direct way compared to the reduction-based manner in \cite{zhang2022parameter}.
\end{enumerate}
Now, we turn to provide the first (nearly) optimal in-expectation
convergence rate of clipping algorithms in Theorems \ref{thm:lip-exp}
(any-time bound) and \ref{thm:lip-exp-fix} (fixed time bound), which
correspond to the cases of unknown $T$ and known $T$ respectively.
\begin{thm}
\label{thm:lip-exp}Suppose Assumptions (1)-(5) hold with $\mu=0$
and let $\bar{x}_{T}=\frac{1}{T}\sum_{t=1}^{T}x_{t}$. Under the choices
of $M_{t}=2G\lor Mt^{\frac{1}{p}}$ where $M\geq0$ can be any real
number and $\eta_{t}=\frac{\alpha}{G\sqrt{t}}\land\frac{\alpha}{M_{t}}$
where $\alpha>0$ can be any real number, for any $T\geq1$, we have
\[
\E\left[F(\bar{x}_{T})-F(x_{*})\right]\leq O\left(\left(\alpha\left(\log T+(\sigma/M)^{2p}\log^{2}T\right)+\frac{\left\Vert x_{1}-x_{*}\right\Vert ^{2}}{\alpha}\right)\left(\frac{G}{\sqrt{T}}\lor\frac{M}{T^{\frac{p-1}{p}}}\right)\right).
\]
Especially, by setting $M=\sigma$ when $\sigma$ is known, we have
\[
\E\left[F(\bar{x}_{T})-F(x_{*})\right]\leq O\left(\left(\alpha\log^{2}T+\frac{\left\Vert x_{1}-x_{*}\right\Vert ^{2}}{\alpha}\right)\left(\frac{G}{\sqrt{T}}\lor\frac{\sigma}{T^{\frac{p-1}{p}}}\right)\right).
\]
\end{thm}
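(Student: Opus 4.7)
The plan is to follow the standard descent-inequality analysis for Projected SGD but to carefully handle the clipping bias in an unbounded domain. Let $g_{t}^{\star}:=\E[\hp F(x_{t})\mid x_{t}]\in\pa F(x_{t})$ and let $b_{t}:=\E[g_{t}\mid x_{t}]-g_{t}^{\star}$ denote the clipping bias. Projection nonexpansiveness together with the subgradient inequality $\langle g_{t}^{\star},x_{t}-x_{*}\rangle\geq F(x_{t})-F(x_{*})$ yields the one-step conditional recursion
\[
2\eta_{t}\E[F(x_{t})-F(x_{*})\mid x_{t}]\leq\|x_{t}-x_{*}\|^{2}-\E[\|x_{t+1}-x_{*}\|^{2}\mid x_{t}]+\eta_{t}^{2}\E[\|g_{t}\|^{2}\mid x_{t}]+2\eta_{t}\|b_{t}\|\,\|x_{t}-x_{*}\|.
\]
Because $M_{t}\geq2G\geq2\|g_{t}^{\star}\|$, on the clipping event $\{\|\hp F(x_{t})\|>M_{t}\}$ the noise $\xi_{t}:=\hp F(x_{t})-g_{t}^{\star}$ necessarily satisfies $\|\xi_{t}\|\geq M_{t}/2$, so the standard $p$-th-moment truncation calculus yields
\[
\|b_{t}\|\leq2^{p}\sigma^{p}/M_{t}^{p-1},\qquad\E[\|g_{t}\|^{2}\mid x_{t}]\leq c\bigl(G^{2}+\sigma^{p}M_{t}^{2-p}\bigr)
\]
for an absolute constant $c$.

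The heart of the argument is a uniform-in-$t$ bound on $A_{t}:=\E[\|x_{t}-x_{*}\|^{2}]$. Taking total expectation in the one-step recursion and dropping the nonnegative gap term gives $A_{t+1}\leq A_{t}+\eta_{t}^{2}\E[\|g_{t}\|^{2}]+2\eta_{t}\|b_{t}\|\sqrt{A_{t}}$. Unrolling and setting $S_{T}:=\max_{1\leq t\leq T+1}A_{t}$,
\[
S_{T}\leq A_{1}+\sum_{t=1}^{T}\eta_{t}^{2}\E[\|g_{t}\|^{2}]+2\sqrt{S_{T}}\sum_{t=1}^{T}\eta_{t}\|b_{t}\|.
\]
Substituting the stated choices of $\eta_{t}$ and $M_{t}$ and splitting the sums according to the two regimes $M_{t}=2G$ (for $t\leq(2G/M)^{p}$) and $M_{t}=Mt^{1/p}$ (for larger $t$), routine computation yields $\sum\eta_{t}^{2}\E[\|g_{t}\|^{2}]=O(\alpha^{2}(1+(\sigma/M)^{p})\log T)$ and $\sum\eta_{t}\|b_{t}\|=O(\alpha(\sigma/M)^{p}\log T)$. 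Solving the resulting quadratic inequality in $\sqrt{S_{T}}$ gives
\[
S_{T}=O\bigl(\|x_{1}-x_{*}\|^{2}+\alpha^{2}\log T+\alpha^{2}(\sigma/M)^{2p}\log^{2}T\bigr),
\]
and it is this step that produces the $(\sigma/M)^{2p}\log^{2}T$ term appearing in the theorem.

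Returning to the one-step bound, summing over $t=1,\ldots,T$, and using $\E[\|b_{t}\|\|x_{t}-x_{*}\|]\leq\|b_{t}\|\sqrt{S_{T}}$,
\[
2\sum_{t=1}^{T}\eta_{t}\E[F(x_{t})-F(x_{*})]\leq A_{1}+\sum_{t=1}^{T}\eta_{t}^{2}\E[\|g_{t}\|^{2}]+2\sqrt{S_{T}}\sum_{t=1}^{T}\eta_{t}\|b_{t}\|.
\]
Since $\eta_{t}$ is nonincreasing and each $F(x_{t})-F(x_{*})\geq0$, the left-hand side is at least $2\eta_{T}\sum\E[F(x_{t})-F(x_{*})]$; combining with Jensen's inequality $F(\bar{x}_{T})\leq\frac{1}{T}\sum F(x_{t})$ converts this into a bound on $\E[F(\bar{x}_{T})-F(x_{*})]$. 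The identity $1/(T\eta_{T})=(G/\sqrt{T}\vee M/T^{(p-1)/p})/\alpha$, together with a single Young's inequality to absorb the cross term $\sqrt{A_{1}}\cdot\alpha(\sigma/M)^{p}\log T$ into $A_{1}/\alpha+\alpha(\sigma/M)^{2p}\log^{2}T$, reproduces the stated rate, and setting $M=\sigma$ recovers the noise-adaptive form. The principal obstacle throughout is the lack of a compact domain: the bias $b_{t}$ must be paired with the \emph{unbounded} quantity $\|x_{t}-x_{*}\|$ in the descent inequality, and the self-referential quadratic bound on $S_{T}$ obtained above is exactly what resolves this and manufactures the characteristic $(\sigma/M)^{2p}\log^{2}T$ correction relative to the unbiased-SGD rate.
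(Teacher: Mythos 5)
Your proposal is correct and follows essentially the same route as the paper: you derive the one-step descent recursion, bound the clipping bias $\|b_t\|=O(\sigma^p M_t^{1-p})$ and the clipped second moment $\E[\|g_t\|^2]=O(G^2+\sigma^p M_t^{2-p})$ (the paper's Lemma~\ref{lem:err-bound} / Lemma~\ref{lem:basic}), pair the bias with $\|x_t-x_*\|$, and close the self-referential inequality on the running maximum of $\E[\|x_t-x_*\|^2]$. The only cosmetic difference is that you solve the quadratic inequality $S_T\leq P+2Q\sqrt{S_T}$ directly, whereas the paper's Lemma~\ref{lem:basic-lip-exp} reaches the same uniform bound $\E[d_t^2]\leq K$ via an AM-GM step followed by induction; the two are equivalent and yield the same $K=O(\|x_1-x_*\|^2+\alpha^2\log T+\alpha^2(\sigma/M)^{2p}\log^2 T)$.
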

\begin{thm}
\label{thm:lip-exp-fix}Suppose Assumptions (1)-(5) hold with $\mu=0$
and let $\bar{x}_{T}=\frac{1}{T}\sum_{t=1}^{T}x_{t}$. Additionally,
assume $T$ is known. Under the choices of $M_{t}=2G\lor MT^{\frac{1}{p}}$
where $M\geq0$ can be any real number and $\eta_{t}=\frac{\alpha}{G\sqrt{T}}\land\frac{\alpha}{M_{t}}$
where $\alpha>0$ can be any real number, for any $T\geq1$, we have
\[
\E\left[F(\bar{x}_{T})-F(x_{*})\right]\leq O\left(\left(\alpha\left(1+(\sigma/M)^{2p}\right)+\frac{\left\Vert x_{1}-x_{*}\right\Vert ^{2}}{\alpha}\right)\left(\frac{G}{\sqrt{T}}\lor\frac{M}{T^{\frac{p-1}{p}}}\right)\right).
\]
Especially, by setting $M=\sigma$ when $\sigma$ is known, we have
\[
\E\left[F(\bar{x}_{T})-F(x_{*})\right]\leq O\left(\left(\alpha+\frac{\left\Vert x_{1}-x_{*}\right\Vert ^{2}}{\alpha}\right)\left(\frac{G}{\sqrt{T}}\lor\frac{\sigma}{T^{\frac{p-1}{p}}}\right)\right).
\]
\end{thm}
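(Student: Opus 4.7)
The plan is to run the standard projected SGD descent analysis on the clipped, biased estimator $g_{t}$. Since $T$ is known in this setting, $M_{t}\equiv M_{T}=2G\lor MT^{1/p}$ and $\eta_{t}\equiv\eta=(\alpha/(G\sqrt T))\land(\alpha/M_{T})$ are both constant in $t$, which makes the telescoping clean and avoids the logarithmic factors that arose in Theorem~\ref{thm:lip-exp} from time-varying step sizes.

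First I would decompose $g_{t}=\nabla F(x_{t})+b_{t}+\xi_{t}$, where $\nabla F(x_{t})\coloneqq\E[\hp F(x_{t})\mid x_{t}]\in\pa F(x_{t})$, $b_{t}\coloneqq\E[g_{t}\mid x_{t}]-\nabla F(x_{t})$ is the clipping bias, and $\xi_{t}\coloneqq g_{t}-\E[g_{t}\mid x_{t}]$ is a martingale-difference noise. Because $M_{T}\geq 2G\geq 2\|\nabla F(x_{t})\|$, the event $\{\|\hp F(x_{t})\|>M_{T}\}$ is contained in $\{\|\hp F(x_{t})-\nabla F(x_{t})\|>M_{T}/2\}$; combining this with Markov's inequality and the $p$-th moment assumption gives the pointwise bounds $\|b_{t}\|\lesssim \sigma^{p}M_{T}^{1-p}=:B$ and $\E[\|g_{t}\|^{2}\mid x_{t}]\lesssim G^{2}+\sigma^{p}M_{T}^{2-p}=:V$. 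Keeping the $G^{2}$ piece separate from the heavy-tail contribution (rather than using the cruder $M_{T}^{2-p}(G^{p}+\sigma^{p})$) is important to match the stated rate. The non-expansiveness of $\Pi_{\dom}$, convexity $\langle\nabla F(x_{t}),x_{t}-x_{*}\rangle\geq F(x_{t})-F(x_{*})$, and Cauchy--Schwarz on $\langle b_{t},x_{t}-x_{*}\rangle$ then give, after summing over $t=1,\dots,T$ and dividing by $2\eta T$,
\[\E[F(\bar x_{T})-F(x_{*})]\leq \frac{\|x_{1}-x_{*}\|^{2}}{2\eta T}+\frac{\eta V}{2}+\frac{B}{T}\sum_{t=1}^{T}\E\|x_{t}-x_{*}\|.\]

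The hard part will be controlling the coupled last term, since the bias is not zero-mean and, without strong convexity, $\|x_{t}-x_{*}\|$ need not decrease along the trajectory. My plan is to prove a self-bounding inequality: dropping the nonpositive $-2\eta\E[F(x_{t})-F(x_{*})]$ from the one-step descent, iterating, and applying Jensen's inequality to $u_{t}\coloneqq\sqrt{\E\|x_{t}-x_{*}\|^{2}}$ yields $u_{t}^{2}\leq\|x_{1}-x_{*}\|^{2}+2\eta B\,s_{t-1}+t\eta^{2}V$, where $s_{T}\coloneqq\sum_{t=1}^{T}u_{t}$. Since $s_{T}\leq T\max_{t}u_{t}$, this becomes a quadratic inequality in $s_{T}$; completing the square produces $s_{T}\lesssim T\|x_{1}-x_{*}\|+T^{2}\eta B+T^{3/2}\eta\sqrt{V}$, so the coupling contribution $(B/T)s_{T}$ is at most $B\|x_{1}-x_{*}\|+T\eta B^{2}+T^{1/2}\eta B\sqrt{V}$.

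Finally I would plug in the choices of $\eta$ and $M_{T}$ and split on whether $M_{T}=2G$ or $M_{T}=MT^{1/p}$. In each regime every one of the five pieces---$\|x_{1}-x_{*}\|^{2}/(2\eta T)$, $\eta V/2$, $B\|x_{1}-x_{*}\|$, $T\eta B^{2}$ and $T^{1/2}\eta B\sqrt V$---is at most a constant multiple of $(G/\sqrt T\lor M/T^{(p-1)/p})$ with coefficient bounded by $\alpha(1+(\sigma/M)^{2p})+\|x_{1}-x_{*}\|^{2}/\alpha$. The three reductions powering this are: the identity $T\eta B\lesssim \alpha(\sigma/M)^{p}$, which stays uniformly bounded in $T$ because the clipping radius scales like $T^{1/p}$; the AM-GM bound $\|x_{1}-x_{*}\|\leq(\alpha+\|x_{1}-x_{*}\|^{2}/\alpha)/2$ to absorb the linear $B\|x_{1}-x_{*}\|$ term; and $(\sigma/M)^{q}\leq 1+(\sigma/M)^{2p}$ for $q\in[0,2p]$ to collapse every bias-coupling exponent into the single $(\sigma/M)^{2p}$ factor. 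Specializing to $M=\sigma$ sends $(\sigma/M)^{2p}$ to $1$ and yields the second displayed inequality.
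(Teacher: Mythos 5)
Your proposal is correct and follows essentially the same route as the paper: you decompose the clipped gradient into a bias piece $b_t$ with $\|b_t\|\lesssim\sigma^p M_T^{1-p}$ and a martingale noise with conditional second moment $\lesssim\sigma^p M_T^{2-p}$, run the standard descent inequality with constant $\eta$, and then tame the bias--distance coupling $\tfrac{B}{T}\sum_t\E\|x_t-x_*\|$ by a self-bounding argument on the running maximum of $\sqrt{\E\|x_t-x_*\|^2}$, exactly the structure of Lemma~\ref{lem:basic-lip-exp} (with the known-$T$ remark) plus the induction in the proof of Theorem~\ref{thm:lip-exp}. The only deviation is cosmetic: you close the self-bound by completing the square in a quadratic inequality for $s_T$ (via $s_T\le T\max_t u_t$), whereas the paper applies AM-GM with coefficient $1/4$ and a one-line induction to get $\mathcal{D}_t^2\le K$ for the constant $K$ of~\eqref{eq:lip-exp-known-def-K}; both deliver the same bound $\E[F(\bar x_T)-F(x_*)]\le K/(2\eta T)$ up to absolute constants.
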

We first remark that the choices of $M_{t}$ and $\eta_{t}$ in Theorems
\ref{thm:lip-exp} and \ref{thm:lip-exp-fix} are the same as them
in Theorems \ref{thm:lip-prob} and \ref{thm:lip-prob-fix}. Hence,
our $M_{t}$ and $\eta_{t}$ guarantee both high-probability and in-expectation
convergence. Next, compared with the any-time bounds in Theorem \ref{thm:lip-exp},
the extra logarithmic factors are removed in the fixed time bounds
in Theorem \ref{thm:lip-exp-fix}. Additionally, the rates for the
case of known $\sigma$ are always adaptive to the noise. In particular,
when $T$ is known and $p=2$, our result matches the traditional
bound of SGD perfectly.

Lastly, let us talk about the differences with the prior work \cite{vural2022mirror}
providing the only in-expectation bound but for a different algorithm.
\begin{enumerate}
\item The algorithm in \cite{vural2022mirror} is based on MD, but more
importantly, requires the property of uniform convexity (see Definition
1 in \cite{vural2022mirror}) for the mirror map. However, our in-expectation
bounds are for the algorithm employing the clipping method, which
is widely used to deal with heavy-tailed problems in several areas
but lacks theoretical justifications in nonsmooth convex optimization.
\item \cite{vural2022mirror} only assumes $\E[\|\widehat{\pa}F(x)\|^{p}\vert x]\leq\sigma^{p}$
for some $p\in(1,2]$ and $\sigma>0$ (strictly speaking, the norm
in \cite{vural2022mirror} is $\ell_{q}$ norm for some $q\in[1,\infty]$,
however, our method can be extended to an arbitrary norm including
$\ell_{q}$ norm as a subcase). This assumption is equivalent to
$\E[\|\widehat{\pa}F(x)\|^{p}\vert x]\leq O((\sigma+G)^{p})$ under
our assumptions, which means \cite{vural2022mirror} needs both $G$
and $\sigma$ as input but their final rate doesn't adapt to $\sigma$.
In contrast, we not only give a rate being adaptive to the noise when
$\sigma$ is known but also show how to run our algorithm without
any prior knowledge of $\sigma$.
\item Besides, our parameter settings not only guarantee in-expectation
convergence but also admit provable high-probability bounds as shown
in Theorems \ref{thm:lip-prob} and \ref{thm:lip-prob-fix}. But \cite{vural2022mirror}
only provides the in-expectation result for their algorithm.
\item Finally, our proof strategy is completely different from \cite{vural2022mirror}
as there is no clipping step in which. We believe that our techniques
in the proof will lead to a better understanding of the clipping method.
\end{enumerate}

\subsection{Initial Distance Adaptive Convergence Rate When $\mu=0$\label{subsec:dog}}

As mentioned above, in this section, we show that Algorithm \ref{alg:algo}
can achieve an initial distance adaptive convergence under sophisticated
parameters. In the traditional bound for SGD, a quadratic dependence
on the initial distance $O(\alpha+\|x_{1}-x_{*}\|^{2}/\alpha)$ always
shows up where $\alpha>0$ is the learning rate. Such a term also
appears in our above results, e.g., Theorem \ref{thm:lip-prob}. Hence,
theoretically speaking, the optimal learning rate $\alpha^{*}=\Theta(\|x_{1}-x_{*}\|)$.
If the domain $\dom$ is bounded with diameter $D$, one can set $\alpha=\Theta(D)$
as a proxy of $\alpha^{*}$. However, in the general unbounded case,
e.g., $\dom=\R^{d}$, the strategy of $\alpha=\Theta(D)$ is no longer
useful as $D=\infty$ now.

One may think it is impossible to achieve a better dependence on $\|x_{1}-x_{*}\|$
for SGD based algorithm if no prior information on $x_{*}$ is known,
whereas \cite{mcmahan2012no} is the first to improve it to the order
of $O(\|x_{1}-x_{*}\|\log(\|x_{1}-x_{*}\|))$ on $\R$. More surprisingly,
the problem considered in \cite{mcmahan2012no} is online learning,
which can cover the optimization problem considered in this paper.
Later on, different algorithms (see, e.g., \cite{mcmahan2014unconstrained,orabona2016coin,zhang2022pde})
are proposed to achieve such an initial distance adaptive bound on
$\R^{d}$. \cite{zhang2022parameter} is the first to extend such
kind of algorithms to deal with online learning problems with heavy-tailed
noises. However, all of these methods are designed for online learning
problems originally causing the algorithms to be complicated when
using them to deal with convex optimization problems. 

Recently, three different works \cite{carmon2022making,defazio2023learning,ivgi2023dog}
come up with different methods to achieve the initial distance adaptive
bound for convex optimization. In our paper, we borrow the key idea
provided in \cite{ivgi2023dog}, i.e., using the term $r_{t}=(\max_{s\in\left[t\right]}\|x_{1}-x_{s}\|)\lor r$
in the step size where $r>0$ can be any real number to approximate
the optimal choice $\Theta(\|x_{1}-x_{*}||)$, to achieve the better
dependence on $\|x_{1}-x_{*}\|$ as shown in the following theorem.
\begin{thm}
\label{thm:lip-dog-prob}Suppose Assumptions (1)-(5) hold with $\mu=0$.
Given $\delta\in(0,1)$, under the choices of 
\begin{itemize}
\item $w_{t\geq1}>0$ is a non-decreasing sequence satisfying $\sum_{t=1}^{T}\frac{1}{tw_{t}}\leq W<\infty$
for any $T\geq1$ and some $W\in\R$;
\item $M_{t}=2G\lor\sigma(tw_{t}/\log(4/\delta)){}^{\frac{1}{p}}$;
\item $\eta_{t}=r_{t}\gamma_{t}$ and $\gamma_{t}=\frac{\alpha_{1}}{G\sqrt{tw_{t}}}\land\frac{\alpha_{2}}{M_{t}}$
where $r_{t}=(\max_{s\in\left[t\right]}\|x_{1}-x_{s}\|)\lor r$ and
$r>0$ can be set arbitrarily;
\item $\alpha_{1}=\frac{1}{\sqrt{32W}},\alpha_{2}=\frac{1}{8\left(\frac{16}{3}+8\sqrt{5W}+4W\right)\log\frac{4}{\delta}}\land\frac{1}{\sqrt{16\left(\frac{32}{3}+8\sqrt{5W}+80W\right)\log\frac{4}{\delta}}}$;
\end{itemize}
then with probability at least $1-\delta$, for any sufficiently large
$T\geq\Omega(\log\frac{r+\left\Vert x_{1}-x_{*}\right\Vert }{r})$,
there is
\begin{align}
F(\bar{x}_{I(T)})-F(x_{*})\leq & O\left(\left(1+\log\frac{r+\left\Vert x_{1}-x_{*}\right\Vert }{r}\right)\left(r+\left\Vert x_{1}-x_{*}\right\Vert \right)\right.\nonumber \\
 & \quad\left.\times\left(\frac{G\sqrt{Ww_{T}}}{\sqrt{T}}\lor\frac{G\left(1+W\right)\log\frac{1}{\delta}}{T}\lor\frac{\sigma\left(1+W\right)\left(w_{T}\right)^{\frac{1}{p}}\left(\log\frac{1}{\delta}\right)^{1-\frac{1}{p}}}{T^{\frac{p-1}{p}}}\right)\right).\label{eq:dog-bound}
\end{align}
where
\[
\bar{x}_{T}=\frac{\sum_{t=1}^{T}r_{t}x_{x}}{\sum_{t=1}^{T}r_{t}},I(T)\in\mathrm{argmax}_{t\in\left[T\right]}\sum_{s=1}^{t}\frac{r_{s}}{r_{t+1}}.
\]

Under the first example, $w_{t}=1+\log^{2}(t)$ and $W=1+\frac{\pi}{2}$,
given in Fact \ref{fact:dog-order}, there is
\[
F(\bar{x}_{I(T)})-F(x_{*})\leq O\left(\left(1+\log\frac{r+\left\Vert x_{1}-x_{*}\right\Vert }{r}\right)\left(r+\left\Vert x_{1}-x_{*}\right\Vert \right)\left(\frac{G\log T}{\sqrt{T}}\lor\frac{G\log\frac{1}{\delta}}{T}\lor\frac{\sigma\left(\log T\right)^{\frac{2}{p}}\left(\log\frac{1}{\delta}\right)^{1-\frac{1}{p}}}{T^{\frac{p-1}{p}}}\right)\right).
\]
\end{thm}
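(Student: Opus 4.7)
The plan is to adapt the high-probability analysis behind Theorem \ref{thm:lip-prob} to the data-dependent step size $\eta_t = r_t \gamma_t$, with $r_t$ acting as a proxy for the unknown quantity $\Theta(\|x_1 - x_*\|)$. I would start from the standard projection-nonexpansiveness inequality
\[
\|x_{t+1} - x_*\|^2 \leq \|x_t - x_*\|^2 - 2\eta_t \langle g_t, x_t - x_*\rangle + \eta_t^2 \|g_t\|^2,
\]
and split $g_t = \na F(x_t) + b_t + \xi_t$, where $b_t = \E[g_t|x_t] - \na F(x_t)$ is the clipping bias (controlled by $\|b_t\| \lesssim \sigma^p/M_t^{p-1}$, using that $M_t \geq 2G$ so the clipping only distorts the noise part) and $\xi_t = g_t - \E[g_t|x_t]$ is a bounded martingale difference with $\|\xi_t\| \leq 2M_t$. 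After multiplying by an appropriate factor and summing, the telescoping of $\|x_t-x_*\|^2/(r_t\gamma_t)$ with a varying prefactor produces the $r_s/r_{t+1}$ ratios that are exactly what the output index $I(T) \in \mathrm{argmax}_t \sum_{s\leq t} r_s/r_{t+1}$ is designed to control; Jensen's inequality on the weighted average $\bar{x}_{I(T)}$ with weights $r_t$ then turns the convexity bound $r_t\langle \na F(x_t), x_t-x_*\rangle \geq r_t(F(x_t)-F(x_*))$ into the left-hand side of the claimed rate.

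For the high-probability piece, I would apply a Freedman-type inequality to the two martingale sums $\sum_t r_t\gamma_t \langle \xi_t, x_t - x_*\rangle$ and $\sum_t r_t^2 \gamma_t^2 (\|g_t\|^2 - \E[\|g_t\|^2 |x_t])$, using the deterministic bounds $\|\xi_t\| \leq 2M_t$, $\|x_t - x_*\| \leq r_t + \|x_1 - x_*\|$, and $\gamma_t^2 M_t^2 \leq \alpha_1^2/(tw_t) \vee \alpha_2^2$. The constants $\alpha_1,\alpha_2$ specified in the statement are chosen precisely so that the Freedman variance terms, after being summed and bounded using $\sum_t 1/(tw_t) \leq W$, can be absorbed either into the deterministic progress term $2r_t\gamma_t(F(x_t)-F(x_*))$ or into a constant multiple of $r_t^2$, producing a self-bounding inequality roughly of the form
\[
\sum_{t=1}^{T} r_t(F(x_t) - F(x_*)) \leq r_{T+1}^2\, C_1(T,\delta) + r_{T+1}\, C_2(T,\delta),
\]
with $C_1,C_2$ involving only $G,\sigma,W,w_T$ and $\log(1/\delta)$. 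Combined with the DoG choice of $I(T)$, this yields the $T$-dependence in the theorem up to the $r_{T+1}$ factor.

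The last and hardest step is the self-bounding of $r_t$ itself, since $r_t$ is random and it drives both the step size and the concentration event. Following the template of \cite{ivgi2023dog}, I would prove by induction on $t$ that on the high-probability event, $\max_{s\leq t}\|x_1 - x_s\| \leq O\bigl((r+\|x_1-x_*\|)\log\tfrac{r+\|x_1-x_*\|}{r}\bigr)$, which both caps $r_{T+1}$ and contributes the logarithmic factor in \eqref{eq:dog-bound}. The main obstacle is precisely this coupling: the clipping bias, the data-dependent step size through $r_t$, and the Freedman variance must simultaneously close, which forces the specific algebraic balance of $\alpha_1,\alpha_2$ in terms of $W$ and the requirement $T \geq \Omega(\log\tfrac{r+\|x_1-x_*\|}{r})$ (so that the induction can start from the regime where the $r_t$ proxy has already saturated). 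Once this self-bounding loop is closed, substituting $w_t = 1+\log^2 t$ and $W = 1+\pi/2$ from Fact \ref{fact:dog-order} immediately yields the explicit corollary stated at the end of the theorem.
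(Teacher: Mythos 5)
There is a genuine gap: you propose to apply Freedman's inequality directly to $\sum_t r_t\gamma_t\langle\xi_t, x_t-x_*\rangle$, but this sum has no deterministic almost-sure bound because both $r_t$ and $\|x_t-x_*\|$ are random and a priori unbounded. Your claimed bound $\|x_t-x_*\|\leq r_t + \|x_1-x_*\|$ is correct, yet it still involves the random $r_t$, so $|r_t\gamma_t\langle\xi_t,x_t-x_*\rangle|\lesssim \alpha_2 r_t(r_t+\|x_1-x_*\|)$ is not a constant and Freedman (Lemma~\ref{lem:freedman}) does not apply. You gesture at closing this by an induction over $t$ on the high-probability event, but if that induction requires a fresh concentration event at each step it produces the union-bound $\log(T/\delta)$ dependence that the theorem explicitly avoids, and if it does not, it is unclear how the event is obtained in the first place. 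The paper's fix is precisely the normalization by $D_t=\max_{s\le t}d_s$ together with the Abel-summation inequality (Lemma~5 in \cite{ivgi2023dog}): one bounds $\sum_t 2r_t\gamma_t\langle\xi_t^u,x_*-x_t\rangle$ by $4D_\tau r_\tau\max_{t\le\tau}\bigl|\sum_{s\le t}\gamma_s\langle\xi_s^u,\tfrac{x_*-x_s}{D_s}\rangle\bigr|$, and the inner martingale now has the deterministic envelope $\gamma_s\|\xi_s^u\|\cdot\tfrac{\|x_*-x_s\|}{D_s}\le 2\alpha_2$, to which Freedman applies once and for all; the random factor $D_\tau r_\tau$ is pulled entirely outside. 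The same device (factoring out $r_\tau^2$) is used for the quadratic martingale term $\sum_t\gamma_t^2(\|\xi_t^u\|^2-\E_t\|\xi_t^u\|^2)$. Only after these two concentration bounds are in hand does a (single, non-union-bound) induction on the resulting deterministic inequality give $d_{\tau+1}^2\le 3d_1^2+r^2/2$, whence $r_\tau=O(r+\|x_1-x_*\|)$ \emph{without} a logarithmic inflation.

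A smaller but real misplacement: you attribute the $\log\frac{r+\|x_1-x_*\|}{r}$ factor to the self-bounding induction on $r_t$. In the paper it comes from a different place—the deterministic Lemma~\ref{lem:dog-sequnce}, which lower-bounds the DoG weight sum $\max_t\sum_{s\le t}r_s/r_{t+1}$ by $T/\bigl((r_{T+1}/r_1)^{1/T}(1+\log(r_{T+1}/r_1))\bigr)$; the induction itself yields a \emph{log-free} bound $r_{T+1}=O(r+\|x_1-x_*\|)$. Your decomposition of $g_t$ into true gradient, clipping bias, and martingale noise, and the role of $\alpha_1,\alpha_2$ in absorbing the variance sums via $\sum_t 1/(tw_t)\le W$, are consistent with the paper (cf.\ Lemma~\ref{lem:err-bound} and the choices of $\alpha_1,\alpha_2$), so the overall skeleton is right; the missing ingredient is the $D_t$-normalization plus Abel summation, without which the concentration step cannot close at the claimed $\log(1/\delta)$ rate.
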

We provide two examples of $w_{t}$ before explaining more about the
theorem.
\begin{fact}
\label{fact:dog-order}The following two choices satisfy the requirements
on $w_{t}$ in Theorem \ref{thm:lip-dog-prob}:
\begin{itemize}
\item $w_{t}=1+\log^{2}(t)$ and $W=1+\frac{\pi}{2}$
\item $w_{t}=\left[y^{(n+1)}(t)\right]^{1+\varepsilon}\prod_{i=1}^{n}y^{(i)}(t)$
and $W=1+\frac{1}{\varepsilon}$ where $y(t)=1+\log(t)$, $y^{(n)}(t)=y(y^{(n-1)}(t))$
is the $n$-times composition with itself for any non-negative integer
$n$, and $\varepsilon>0$ can be chosen arbitrarily.
\end{itemize}
\end{fact}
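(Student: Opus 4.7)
The plan is to verify both summability bounds by isolating the $t=1$ contribution and then bounding the tail $\sum_{t\ge 2}$ by an integral via the integral test. Note that in both examples $y(1)=1+\log 1 = 1$, so by induction $y^{(i)}(1)=1$ for every $i\ge 0$. This makes the $t=1$ term equal to exactly $1$ in both cases, which is precisely the ``$1+$'' part of the claimed constant $W$. So the real content is showing that the integral of the general term from $1$ to $\infty$ equals $\pi/2$ in the first example and $1/\varepsilon$ in the second.

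For the first example, I would verify that $f(x) = \frac{1}{x(1+\log^2 x)}$ is decreasing on $[1,\infty)$ (a direct computation gives $f'(x) = -\frac{(1+\log x)^2}{x^2(1+\log^2 x)^2} \le 0$), so the integral test yields $\sum_{t=2}^T f(t) \le \int_1^T f(x)\,dx$. Then the substitution $u = \log x$, $du = dx/x$ converts the integral into $\int_0^\infty \frac{du}{1+u^2} = \arctan(u)\big|_0^\infty = \pi/2$. Combined with the $t=1$ term this gives the required bound $1 + \pi/2$.

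For the second example, the key computation is a chain-rule identity: since $y'(x)=1/x$, induction on $k$ shows that
\[
\frac{d}{dx}\, y^{(k+1)}(x) \;=\; \frac{1}{x\,\prod_{i=1}^{k} y^{(i)}(x)}.
\]
Consequently, the integrand in the tail,
\[
\frac{1}{x\,[y^{(n+1)}(x)]^{1+\varepsilon}\,\prod_{i=1}^{n} y^{(i)}(x)},
\]
is exactly the derivative of $-\frac{1}{\varepsilon}\,[y^{(n+1)}(x)]^{-\varepsilon}$. Since the integrand is positive and non-increasing on $[1,\infty)$ (each $y^{(i)}(x)\ge 1$ is non-decreasing), the integral test applies, and the antiderivative evaluates to $-\frac{1}{\varepsilon}$ at $x=1$ (using $y^{(n+1)}(1)=1$) and $0$ at $x=\infty$, giving $\int_1^\infty = \frac{1}{\varepsilon}$. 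Adding the $t=1$ contribution of $1$ produces the bound $1 + 1/\varepsilon$.

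The main (mild) obstacle is simply spotting the antiderivative in the second example; once one recognizes the telescoping structure of the iterated-log derivatives, both verifications reduce to one-line integral computations. Monotonicity of the summands is needed to justify the integral test but follows immediately in both cases from the positivity and monotonicity of $y$ and its iterates on $[1,\infty)$.
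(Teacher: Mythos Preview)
Your proposal is correct and follows essentially the same approach as the paper: isolate the $t=1$ term (which equals $1$ in both cases), bound the tail by the corresponding integral, and evaluate via the substitution $u=\log x$ for the first example and the antiderivative $-\frac{1}{\varepsilon}[y^{(n+1)}(x)]^{-\varepsilon}$ for the second. The paper's proof is terser---it omits the monotonicity check and the chain-rule derivation you spell out---but the argument is the same.
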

There are several points we would like to discuss here. First, the
requirement of $T\geq\Omega(\log\frac{r+\left\Vert x_{1}-x_{*}\right\Vert }{r})$
is not necessary, we indeed prove that $O((\frac{r_{1}+\|x_{1}-x_{*}\|}{r})^{\frac{1}{T}})\times$R.H.S.
of (\ref{eq:dog-bound}) holds for any $T\geq1$. For simplicity,
$T$ is assumed to be large enough to make $O((\frac{r_{1}+\|x_{1}-x_{*}\|}{r})^{\frac{1}{T}})=O(1)$.
Next, we would like to emphasize that Theorem \ref{thm:lip-dog-prob}
still holds under the Mahalanobis norm (the same as \cite{zhang2022parameter}).
However, for the general norm combined with the framework of MD, how
to achieve this initial distance adaptive extension still remains
unclear to us. Besides, our bound is an any time bound (without knowing
$T$) and achieves $O(\log(1/\delta))$ (rather than $O(\log(T/\delta))$)
dependence simultaneously, which are both better than \cite{zhang2022parameter}.
However, compared with Theorems \ref{thm:lip-prob} and \ref{thm:lip-prob-fix},
Theorem \ref{thm:lip-dog-prob} requires knowing the $p$-th moment
$\sigma$ and the failure probability $\delta$ in advance as a tradeoff.
Additionally, compared with \cite{ivgi2023dog}, our proof is very
different since we consider the heavy-tailed noises.

\subsection{General Convergence Theorems When $\mu>0$\label{subsec:str-convex}}

In this section, we focus on establishing the convergence rate of
Algorithm \ref{alg:algo} for strongly convex objectives, i.e., $\mu>0$.
In this case, even when $T$ is assumed to be known, we no longer
consider using $T$ to set $M_{t}$ and $\eta_{t}$ since a step size
$\eta_{t}$ depending on $T$ is rarely used under the strong convexity
assumption.

The first result, Theorem \ref{thm:str-prob}, describes the high-probability
behavior of Algorithm \ref{alg:algo}. To our best knowledge, this
is the first high-probability bound for nonsmooth strongly convex
optimization with heavy-tailed noises matching the in-expectation
lower bound of $\Omega(T^{\frac{2(1-p)}{p}})$.
\begin{thm}
\label{thm:str-prob}Suppose Assumptions (1)-(5) hold with $\mu>0$
let $\bar{x}_{T}=\frac{2}{T(T+1)}\sum_{t=1}^{T}tx_{t}$. Under the
choices of $M_{t}=2G\lor Mt^{\frac{1}{p}}$ and $\eta_{t}=\frac{4}{\mu(t+1)}$
where $M\geq0$ can be any real number, for any $T\geq1$ and $\delta\in(0,1)$,
the following two bounds hold simultaneously with probability at least
$1-\delta$,
\begin{align*}
F(\bar{x}_{T})-F(x_{*}) & \leq O\left(\log^{2}(1/\delta)\left(\frac{G^{2}+\sigma^{2}}{\mu T}+\frac{M^{2}+\sigma^{2p}M^{2-2p}+\sigma^{p}G^{2-p}}{\mu T^{\frac{2(p-1)}{p}}}\right)\right);\\
\left\Vert x_{T+1}-x_{*}\right\Vert ^{2} & \leq O\left(\log^{2}(1/\delta)\left(\frac{G^{2}+\sigma^{2}}{\mu^{2}T}+\frac{M^{2}+\sigma^{2p}M^{2-2p}+\sigma^{p}G^{2-p}}{\mu^{2}T^{\frac{2(p-1)}{p}}}\right)\right).
\end{align*}
Especially, by setting $M=\sigma$ when $\sigma$ is known, we have
\begin{align*}
F(\bar{x}_{T})-F(x_{*}) & \leq O\left(\log^{2}(1/\delta)\left(\frac{G^{2}}{\mu T}+\frac{\sigma^{2}+\sigma^{p}G^{2-p}}{\mu T^{\frac{2(p-1)}{p}}}\right)\right)\\
\left\Vert x_{T+1}-x_{*}\right\Vert ^{2} & \leq O\left(\log^{2}(1/\delta)\left(\frac{G^{2}}{\mu T}+\frac{\sigma^{2}+\sigma^{p}G^{2-p}}{\mu T^{\frac{2(p-1)}{p}}}\right)\right).
\end{align*}
\end{thm}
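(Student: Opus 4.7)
The plan is to follow the standard projected-SGD descent analysis, but carefully leverage the specific step size $\eta_t = 4/(\mu(t+1))$ so that the strong-convexity contractivity produces telescoping weights without having to invoke induction on the iterates. Concretely, I would start from the one-step inequality
\[
\|x_{t+1}-x_*\|^2 \leq \|x_t-x_*\|^2 - 2\eta_t\langle g_t, x_t-x_*\rangle + \eta_t^2\|g_t\|^2,
\]
decompose $g_t = \nabla F(x_t) + b_t + \xi_t$ where $b_t := \E[g_t\mid x_t]-\nabla F(x_t)$ is the clipping bias and $\xi_t := g_t - \E[g_t\mid x_t]$ is the martingale increment, and apply strong convexity to obtain $\langle \nabla F(x_t), x_t-x_*\rangle \geq F(x_t)-F(x_*) + \tfrac{\mu}{2}\|x_t-x_*\|^2$. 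Standard clipping estimates (using $M_t \geq 2G$) then give $\|b_t\| \lesssim \sigma^p M_t^{1-p}$, $\|\xi_t\| \leq 2M_t$, and $\E[\|\xi_t\|^2\mid x_t] \lesssim \sigma^p M_t^{2-p}$.

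Next I would multiply the rearranged inequality by $t$ and sum. With $\eta_t = 4/(\mu(t+1))$, the coefficient $t/(2\eta_t) = \mu t(t+1)/8$ telescopes against the $-\mu t/2\,\|x_t-x_*\|^2$ term to leave
\[
\sum_{t=1}^T t\bigl(F(x_t)-F(x_*)\bigr) + \tfrac{\mu}{4}\sum_{t=1}^T t\|x_t-x_*\|^2 + \tfrac{\mu T(T+1)}{8}\|x_{T+1}-x_*\|^2 \leq \sum_{t=1}^T \tfrac{t\eta_t}{2}\|g_t\|^2 - \sum_{t=1}^T t\langle b_t+\xi_t, x_t-x_*\rangle.
\]
The bias inner product is handled with Young's inequality, $t|\langle b_t, x_t-x_*\rangle| \leq 2t\|b_t\|^2/\mu + \mu t\|x_t-x_*\|^2/8$, so half of the LHS quadratic absorbs it; the remaining $\sum t\|b_t\|^2/\mu$ sums deterministically using $M_t \geq Mt^{1/p}$ and gives the $\sigma^{2p}M^{2-2p}T^{-2(p-1)/p}$ contribution. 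For the quadratic term I split $\|g_t\|^2 \leq 4G^2 + 4\|b_t\|^2 + 2\|\xi_t\|^2$; the first summand produces the $G^2/(\mu T)$ rate, the second is dominated by the bias accounting, and the third requires concentration.

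The heart of the proof is the high-probability control of the two remaining stochastic sums (i) $\sum t\eta_t(\|\xi_t\|^2 - \E[\|\xi_t\|^2\mid x_t])$ and (ii) $\sum t\langle \xi_t, x_t-x_*\rangle$. For (i) I would apply a Bernstein-type martingale inequality using $t\eta_t\|\xi_t\|^2 \lesssim M_t^2/\mu$ as the bounded-increment estimate and $\sum t\eta_t \sigma^p M_t^{2-p}$ as the predictable quadratic variation, yielding the $\sigma^p G^{2-p}T^{-1}$ and $\sigma^p M^{2-p}T^{-2(p-1)/p}$ contributions up to $\log(1/\delta)$ factors. For (ii) I would apply Freedman's inequality: the conditional variance is bounded by $\sum t^2 \sigma^p M_t^{2-p}\|x_t-x_*\|^2$, and using $\sqrt{V\log(1/\delta)} \leq \lambda V + \log(1/\delta)/(4\lambda)$ with a sufficiently small $\lambda$, the random factor $\sum t\|x_t-x_*\|^2$ gets absorbed into the LHS quadratic that survived from Step 3, leaving only logarithmic residues. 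Finally, convexity gives $F(\bar{x}_T)-F(x_*) \leq \frac{2}{T(T+1)}\sum t(F(x_t)-F(x_*))$ for the first claim, and dropping the non-negative $\sum t(F(x_t)-F(x_*))$ on the LHS gives the $\|x_{T+1}-x_*\|^2$ bound.

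The main obstacle is the second martingale sum in (ii): both its predictable variance and its pathwise bound involve the random iterates $\|x_t-x_*\|$, so a naive worst-case substitution destroys the rate and a naive induction introduces an extra $\log T$ into the failure probability. The "direct" route I plan to take is to keep the full negative quadratic term $\tfrac{\mu}{8}\sum t\|x_t-x_*\|^2$ (rather than dropping it for a cleaner telescoping) and use it as a variance sink for a Freedman-with-variance style concentration inequality, combined with a Bernstein bound for the $\|\xi_t\|^2$ fluctuations whose predictable variance is in turn controlled by $\E[\|\xi_t\|^4\mid x_t] \lesssim M_t^2\cdot \sigma^p M_t^{2-p}$. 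Choosing the absorption constants so that both concentration inequalities fit simultaneously under the same $1-\delta$ event, and ensuring that the final dependence on the failure probability is $\log^2(1/\delta)$ rather than $\log(T/\delta)$, is the delicate bookkeeping part of the argument.
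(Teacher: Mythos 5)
Your overall skeleton (decompose $g_t$ into clipped mean, bias, and martingale noise; apply strong convexity and Young's inequality to the bias; exploit $\eta_t = 4/(\mu(t+1))$ so that multiplying by $t$ telescopes the weighted distances; Freedman for the martingale sums; finally AM--GM to close) matches the paper's high-level plan, and your handling of the term $\sum t\eta_t(\|\xi_t^u\|^2-\E_t[\|\xi_t^u\|^2])$ is fine because both its pathwise increment ($\lesssim M_t^2/\mu$) and its predictable variance are deterministic. The place where your route genuinely diverges, and where it breaks, is the martingale $\sum_t Z_t$ with $Z_t := t\langle\xi_t^u, x_*-x_t\rangle$. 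You propose to keep a residual negative quadratic $\tfrac{c\mu}{8}\sum t d_t^2$ on the left-hand side and absorb the Freedman variance into it via $\sqrt{V\log(1/\delta)} \leq \lambda V + \log(1/\delta)/(4\lambda)$. That absorption step is fine arithmetically, but it never gets off the ground: Freedman's inequality (and its self-bounded variant giving $\sum Z_t \leq \lambda\sum\sigma_t^2 + \log(1/\delta)/\lambda$) requires the increments to satisfy $|Z_t|\leq R$ almost surely for a \emph{deterministic} $R$, or at least $\lambda R_t\leq c$ for a predictable bound $R_t$. Here $|Z_t|\leq 2 t M_t d_t$, which is indeed $\F_{t-1}$-measurable but unbounded, so no admissible deterministic $\lambda$ exists, and the exponential moment generating function estimate that underlies the bound blows up whenever $d_t$ is large. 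Trying to patch this with a truncation at level $D$ or a stopping time $\tau_D = \min\{t: d_t > D\}$ reintroduces exactly the induction over a grid of $D$ values (or over time) that inflates the failure probability to $\log(T/\delta)$ --- the thing the theorem statement was trying to avoid.

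The paper sidesteps this by normalizing the martingale increment \emph{inside} the sum. It multiplies the one-step bound by $t/\mathbb{D}_t$ rather than by $t$, where
\[
\mathbb{D}_t := C \lor \max_{s\in[t]}\sqrt{s(s-1)}\,d_s
\quad\text{and}\quad
C \gtrsim \tfrac{G}{\mu}\sqrt{T}\lor\tfrac{M}{\mu}T^{1/p}\lor\tfrac{M_T}{\mu}.
\]
Because $\mathbb{D}_t$ dominates $\sqrt{t(t-1)}\,d_t$, one has $t d_t/\mathbb{D}_t\leq 2$ pointwise, so the normalized increments $t\langle\xi_t^u,(x_*-x_t)/\mathbb{D}_t\rangle$ are bounded by the deterministic constant $4 M_T$ and their conditional variance is bounded by the deterministic $40\sigma^p M_t^{2-p}$. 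Freedman (the time-uniform version, Corollary 15/16) then applies once, for all $\tau\leq T$ simultaneously, with a single $\log(1/\delta)$. After summing, both sides are multiplied back by $\mathbb{D}_\tau$; the random factor $\mathbb{D}_\tau$ is dispatched by Young's inequality $\mathbb{D}_\tau A\leq \tfrac{\mu}{16}\mathbb{D}_\tau^2 + \tfrac{8}{\mu}A^2$, and $\mathbb{D}_\tau^2\leq C^2 + \max_{s\in[\tau]}s(s-1)d_s^2$ is controlled by a short \emph{deterministic} induction inside the one high-probability event. In short: your ``variance sink'' intuition is pointed in a reasonable direction, but without the $\mathbb{D}_t$-normalization (or some equivalent device making the increments a.s. bounded by deterministic constants) the concentration step you rely on is not available, and you have not given an argument that recovers the $\log^2(1/\delta)$ rather than $\log(T/\delta)$ dependence.
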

Our choices of $M_{t}$ and $\eta_{t}$ are inspired by \cite{zhang2020adaptive}
but $M_{t}$ is very different at the same time. We need to emphasize
that the parameter $G$ in $M_{t}=Gt^{\frac{1}{p}}$ used in \cite{zhang2020adaptive}
is not the same as our definition of $G$ since \cite{zhang2020adaptive}
only assumes $\E[\|\hp F(x)\|^{p}\vert x]\leq G^{p}$, $\forall x\in\dom$
rather than separates the assumption on noises independently. Under
our assumptions, there is only $\E[\|\hp F(x)\|^{p}\vert x]\leq O((\sigma+G)^{p})$,
which implies $M_{t}=Gt^{\frac{1}{p}}$ in \cite{zhang2020adaptive}
is equivalent to $M_{t}=\Theta((\sigma+G)t^{\frac{1}{p}})$. But this
diverges from our choice of $M_{t}=Mt^{\frac{1}{p}}$. As one can
see, $M_{t}$ in our settings doesn't rely on $\sigma$, this property
makes our choices more practical.

Finally, though an in-expectation bound of clipping algorithms for
the strongly convex case has been established in \cite{zhang2020adaptive},
we provide a refined rate in Theorem \ref{thm:str-exp}.
\begin{thm}
\label{thm:str-exp}Suppose Assumptions (1)-(5) hold with $\mu>0$
let $\bar{x}_{T}=\frac{2}{T(T+1)}\sum_{t=1}^{T}tx_{t}$. Under the
choices of $M_{t}=2G\lor Mt^{\frac{1}{p}}$ and $\eta_{t}=\frac{4}{\mu(t+1)}$
where $M\geq0$ can be any real number, for any $T\geq1$, we have
\begin{align*}
\E\left[F(\bar{x}_{T})-F(x_{*})\right] & \leq O\left(\frac{G^{2}+\sigma^{2}}{\mu T}+\frac{\sigma^{p}M^{2-p}+\sigma^{2p}M^{2-2p}}{\mu T^{\frac{2(p-1)}{p}}}\right);\\
\E\left[\left\Vert x_{T+1}-x_{*}\right\Vert ^{2}\right] & \leq O\left(\frac{G^{2}+\sigma^{2}}{\mu^{2}T}+\frac{\sigma^{p}M^{2-p}+\sigma^{2p}M^{2-2p}}{\mu^{2}T^{\frac{2(p-1)}{p}}}\right).
\end{align*}
Especially, by setting $M=\sigma$ when $\sigma$ is known, we have
\begin{align*}
\E\left[F(\bar{x}_{T})-F(x_{*})\right] & \leq O\left(\frac{G^{2}}{\mu T}+\frac{\sigma^{2}}{\mu T^{\frac{2(p-1)}{p}}}\right)\\
\E\left[\left\Vert x_{T+1}-x_{*}\right\Vert ^{2}\right] & \leq O\left(\frac{G^{2}}{\mu^{2}T}+\frac{\sigma^{2}}{\mu^{2}T^{\frac{2(p-1)}{p}}}\right).
\end{align*}
\end{thm}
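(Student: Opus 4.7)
The plan is a weighted energy argument that exploits the exact cancellation enabled by $\eta_t = 4/(\mu(t+1))$ combined with the weights $t$. Decompose $g_t = \bar g_t + \epsilon_t + \xi_t$, where $\bar g_t := \E[\hp F(x_t)\mid x_t] \in \pa F(x_t)$, $\epsilon_t := \E[g_t\mid x_t] - \bar g_t$ is the deterministic clipping bias, and $\xi_t := g_t - \E[g_t\mid x_t]$ is a mean-zero martingale difference. Starting from the projection contraction, applying $\mu$-strong convexity to $\langle \bar g_t, x_t - x_*\rangle$, and absorbing the bias cross-term via Young's inequality $-\langle \epsilon_t, x_t-x_*\rangle \le (\mu/4)\|x_t-x_*\|^2 + \|\epsilon_t\|^2/\mu$, I would obtain the per-step estimate
$$F(x_t)-F(x_*) \le \frac{\|x_t-x_*\|^2 - \|x_{t+1}-x_*\|^2}{2\eta_t} - \frac{\mu}{4}\|x_t-x_*\|^2 + \frac{\eta_t}{2}\|g_t\|^2 + \frac{\|\epsilon_t\|^2}{\mu} - \langle \xi_t, x_t-x_*\rangle.$$
Multiplying by $t$, summing, and performing Abel summation on $a_t := t/(2\eta_t) = \mu t(t+1)/8$, the identity $a_t - a_{t-1} = \mu t/4$ exactly cancels the $t\mu\|x_t-x_*\|^2/4$ terms and leaves a nonnegative $\mu T(T+1)/8 \cdot \|x_{T+1}-x_*\|^2$ on the left. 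Expectation kills the $\xi_t$ terms, Jensen converts $\sum_t t\,\E[F(x_t)-F(x_*)]$ into $(T(T+1)/2)\E[F(\bar x_T) - F(x_*)]$, and both target bounds reduce to estimating $\sum_{t=1}^T\bigl(t\eta_t\,\E\|g_t\|^2 + (t/\mu)\,\E\|\epsilon_t\|^2\bigr)$.

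Next I would establish two clipping estimates. For the bias: since $M_t \ge 2G \ge 2\|\bar g_t\|$, the event $\{\|\hp F(x_t)\| > M_t\}$ forces $\|\hp F(x_t) - \bar g_t\| \ge M_t/2$, so $\|\epsilon_t\| \le \E[(\|\hp F(x_t)\| - M_t)_+] \le 2\,\E[\|\hp F(x_t) - \bar g_t\|\,\indi_{\|\hp F(x_t)-\bar g_t\|\ge M_t/2}] \le O(\sigma^p/M_t^{p-1})$ by a Markov-type tail bound. For the centered variance: a case analysis yields $\|g_t - \bar g_t\| \le 3\|\hp F(x_t) - \bar g_t\|$, and combining this with the interpolation $\|g_t-\bar g_t\|^2 \le (3M_t/2)^{2-p}\|g_t-\bar g_t\|^p$ gives $\E\|g_t-\bar g_t\|^2 \le O(M_t^{2-p}\sigma^p)$. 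Together these deliver $\E\|g_t\|^2 \le 2G^2 + O(M_t^{2-p}\sigma^p)$, crucially \emph{without} the spurious $M_t^{2-p}G^p$ term that the cruder estimate $\E\|g_t\|^2 \le M_t^{2-p}\,\E\|\hp F(x_t)\|^p$ would produce.

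Plugging $M_t = 2G \vee Mt^{1/p}$ and using $t\eta_t \le 4/\mu$, the variance sum evaluates to $O\bigl((T(G^2+\sigma^2) + \sigma^p M^{2-p}T^{2/p})/\mu\bigr)$, where Young's inequality $\sigma^p G^{2-p} \le \sigma^2 + G^2$ absorbs the $M_t = 2G$ phase. The bias sum decomposes at the crossover $t_* := (2G/M)^p$: for $t \ge t_*$ (where $M_t = Mt^{1/p}$), $\sum_t t\cdot\sigma^{2p}/M_t^{2(p-1)} = (\sigma^{2p}/M^{2(p-1)})\sum_t t^{(2-p)/p} = O(\sigma^{2p}M^{2-2p}T^{2/p})$; for $t < t_*$, $\sum_t t\cdot\sigma^{2p}/(2G)^{2(p-1)} = O(\sigma^{2p}G^2/M^{2p})$, a $T$-independent constant. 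Dividing by $T^2$ gives the stated rate: the first piece becomes the target $\sigma^{2p}M^{2-2p}/T^{2(p-1)/p}$, and the bounded piece $\sigma^{2p}G^2/(M^{2p}T^2)$ equals $\sigma^{2p}M^{2-2p}/T^{2(p-1)/p}$ exactly at $T = t_*$ and decays strictly faster for $T > t_*$, so it is absorbed into the $O(\cdot)$. The principal obstacle is precisely this two-regime bookkeeping together with establishing the centered variance bound $\E\|g_t-\bar g_t\|^2 \le O(M_t^{2-p}\sigma^p)$: without this refinement, an extra $\sigma^0 G^p M^{2-p}/T^{2(p-1)/p}$ term contaminates the rate and destroys the noise adaptivity that allows the bound to collapse to the optimal deterministic $O(G^2/(\mu T))$ as $\sigma \to 0$.
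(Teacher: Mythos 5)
Your proof is correct and follows essentially the same route as the paper's: the same per-step descent inequality combined with Young's inequality at weight $\mu/4$ to absorb the bias cross-term, the same weighted telescoping with $t/(2\eta_t)=\mu t(t+1)/8$ so that $(t+1)t$ and $t(t-1)$ collapse, the same reduction to bounding $\sum_t t\eta_t\E\|g_t\|^2 + (t/\mu)\E\|\epsilon_t\|^2$, and the same two clipping-error estimates (your $\epsilon_t$ and $\xi_t$ are the paper's $\xi_t^b$ and $\xi_t^u$, and your centered second-moment bound $\E\|g_t-\bar g_t\|^2\le O(\sigma^pM_t^{2-p})$ is the paper's Lemma 9, proved by the identical argument). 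The one cosmetic difference is in the bias sum: you split at the crossover $t_*=(2G/M)^p$ and then absorb the $t<t_*$ contribution, whereas the paper avoids any case split by observing that $M_t\ge Mt^{1/p}$ together with the negative exponent $2-2p<0$ gives $tM_t^{2-2p}\le M^{2-2p}t^{(2-p)/p}$ in one step; your bookkeeping is more laborious but arrives at the same bound. You also correctly identify the refinement that makes the result noise-adaptive — tracking $\|g_t-\bar g_t\|$ through the noise moment $\sigma^p$ rather than the raw moment of $\hp F(x_t)$ — which is exactly what the paper's Lemma 9 provides and where it differs from the earlier analysis of Zhang et al.
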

We briefly discuss Theorem \ref{thm:str-exp} here before finishing
this section. First, we remark that the clipping magnitude $M_{t}$
and step size $\eta_{t}$ are the same as Theorem \ref{thm:str-prob}
without any extra modifications. Additionally, these two in-expectation
bounds are both optimal as they attain the best possible in-expectation
rate $\Omega(T^{\frac{2(1-p)}{p}})$. It is also worth pointing out
that our results have a more explicit dependence on the noise level
$\sigma$ and the Lipschitz constant $G$ compared with \cite{zhang2020adaptive}
as in which there is no explicit assumption on noises as mentioned
before. Notably, the two rates are both adaptive to the noise $\sigma$
while not requiring any prior knowledge on $\sigma$ to set $M_{t}$
and $\eta_{t}$. In other words, when $\sigma=0$, we obtain the optimal
rate $O(T^{-1})$ automatically even not knowing $\sigma$.

\section{Theoretical Analysis\label{sec: analysis}}

We show the ideas for proving our theorems in this section. In Section
\ref{subsec:Two-lemmas}, two of the most fundamental lemmas used
in the proof are presented. In Sections \ref{subsec:lip-prob} and
\ref{subsec:lip-exp}, we focus on the high-probability rate and in-expectation
bound respectively for the case $\mu=0$. Several lemmas used to prove
these two results will be given, the omitted proofs of which are delivered
in Section \ref{sec:app-missing-proofs}. To the end, the proofs of
Theorems \ref{thm:lip-prob}, \ref{thm:lip-prob-fix} and \ref{thm:lip-exp},
\ref{thm:lip-exp-fix} are provided. The proof of the initial distance
adaptive bound is provided in Section \ref{sec:app-dog}. The analysis
for the strongly convex case (i.e., $\mu>0$) is deferred into Section
\ref{sec:app-str} in the appendix. 

Before going through the proof, we introduce some notations used in
the analysis. Let $\F_{t}=\sigma(\hp F(x_{1}),\cdots,\hp F(x_{t}))$
be the natural filtration. Under this definition, $x_{t}$ is $\F_{t-1}$
measurable. $\E_{t}[\cdot]$ denotes $\E[\cdot\mid\F_{t-1}]$ for
brevity. We also employ the following definitions:
\begin{align*}
\Delta_{t}\coloneqq & F(x_{t})-F(x_{*});\quad\pa_{t}\coloneqq\E_{t}\left[\hp F(x_{t})\right]\in\pa F(x_{t});\\
\xi_{t}\coloneqq & g_{t}-\pa_{t};\quad\xi_{t}^{u}\coloneqq g_{t}-\E_{t}\left[g_{t}\right];\quad\xi_{t}^{b}\coloneqq\E_{t}\left[g_{t}\right]-\pa_{t};\\
d_{t}\coloneqq & \left\Vert x_{t}-x_{*}\right\Vert ;\quad D_{t}\coloneqq\max_{s\in\left[t\right]}d_{s};\quad\mathfrak{D}_{t}\coloneqq D_{t}\lor\alpha;
\end{align*}
where $\alpha>0$ is the parameter used to set the step size $\eta_{t}$.
We remark that $\xi_{t},\xi_{t}^{u}\in\F_{t}$ and $\pa_{t},\xi_{t}^{b},d_{t},D_{t},\mathfrak{D}_{t}\in\F_{t-1}$.

\subsection{Fundamental Lemmas\label{subsec:Two-lemmas}}

To start with the analysis, we introduce Lemmas \ref{lem:err-bound}
and \ref{lem:basic}, which serve as foundations in our proof. 
\begin{lem}
\label{lem:err-bound}For any $t\in\left[T\right]$, if $M_{t}\geq2G$,
we have
\begin{align*}
\|\xi_{t}^{u}\| & \le2M_{t};\quad\E_{t}[\|\xi_{t}^{u}\|^{2}]\le10\sigma^{p}M_{t}^{2-p};\\
\|\xi_{t}^{b}\| & \le2\sigma^{p}M_{t}^{1-p};\quad\|\xi_{t}^{b}\|^{2}\leq10\sigma^{p}M_{t}^{2-p}.
\end{align*}
\end{lem}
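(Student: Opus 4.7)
The plan is to prove the four bounds in turn, using the single running observation that $M_t \geq 2G \geq 2\|\pa_t\|$. Writing $X_t \coloneqq \hp F(x_t)$ for brevity, this observation implies that on the event $\{\|X_t\| > M_t\}$ one has both $\|X_t - \pa_t\| \geq \|X_t\| - G \geq M_t/2$ and $\|X_t\| - M_t \leq \|X_t\| - G \leq \|X_t - \pa_t\|$, so excursions of $X_t$ past the clipping threshold are already excursions of the noise past $M_t/2$. Everything else follows by coupling this observation with the $p$-th moment assumption through the standard truncation inequality $Y\,\indi\{Y \geq c\} \leq Y^p/c^{p-1}$, valid for $p \geq 1$ and $c>0$.

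The pointwise bound $\|\xi_t^u\|\leq 2M_t$ is immediate from $\|g_t\|\leq M_t$ (by the definition of the clipping) together with Jensen's inequality and a triangle inequality. For the bias, I would use unbiasedness $\E_t[X_t]=\pa_t$ to rewrite $\xi_t^b=\E_t[g_t-X_t]$; since $g_t=X_t$ on $\{\|X_t\|\leq M_t\}$ and $\|g_t-X_t\|=\|X_t\|-M_t$ otherwise, the running observation yields
\[
\|\xi_t^b\|\leq \E_t\!\left[\|X_t-\pa_t\|\,\indi\{\|X_t-\pa_t\|\geq M_t/2\}\right].
\]
Applying the truncation inequality with $c=M_t/2$ and $Y=\|X_t-\pa_t\|$ gives $\|\xi_t^b\|\leq 2^{p-1}\sigma^p M_t^{1-p}\leq 2\sigma^p M_t^{1-p}$. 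Squaring via the crude estimate $\|\xi_t^b\|\leq\|\E_t[g_t]\|+\|\pa_t\|\leq M_t+G\leq 2M_t$ then produces $\|\xi_t^b\|^2\leq 4\sigma^p M_t^{2-p}\leq 10\sigma^p M_t^{2-p}$.

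The variance bound $\E_t[\|\xi_t^u\|^2]\leq 10\sigma^p M_t^{2-p}$ is the delicate step. Since variance is minimized at the mean, I first upper-bound it by $\E_t[\|g_t-\pa_t\|^2]$ and then split across the two cases. On $\{\|X_t\|\leq M_t\}$ one has $g_t=X_t$ and $\|X_t-\pa_t\|\leq M_t+G\leq 3M_t/2$, so $\|X_t-\pa_t\|^2\leq (3M_t/2)^{2-p}\|X_t-\pa_t\|^p$. On the complement, $\|g_t-\pa_t\|\leq M_t+G\leq 3M_t/2$ while the indicator is dominated by $(2\|X_t-\pa_t\|/M_t)^p$ via the running observation. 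Summing and invoking $\E_t[\|X_t-\pa_t\|^p]\leq \sigma^p$ produces a constant of the form $(3/2)^{2-p}+9\cdot 2^{p-2}$, which is increasing in $p$ on $(1,2]$ and attains its maximum value $10$ at $p=2$.

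The main obstacle is precisely this last variance estimate: one must control a second moment while the data provides only a $p$-th moment, so the case split and the clipping-threshold-vs-noise interpolation must be balanced tightly enough to keep the universal constant at $10$ for every $p\in(1,2]$. Once this is done, the remaining three bounds are essentially one-line consequences of $\|g_t\|\leq M_t$ and the truncation inequality.
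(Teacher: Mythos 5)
Your proposal is correct and takes essentially the same route as the paper: same case split on whether the raw gradient exceeds the clipping threshold, same translation of that event into a noise event via $M_t \geq 2G$, same Markov/truncation estimates, and the same step of passing from $\E_t[\|\xi_t^u\|^2]$ to $\E_t[\|g_t - \pa_t\|^2]$ via the variance-minimization property. The only (inconsequential) difference is your argument for $\|\xi_t^b\|^2 \leq 10\sigma^p M_t^{2-p}$, where you multiply the first-moment bound $\|\xi_t^b\| \leq 2\sigma^p M_t^{1-p}$ by the pointwise bound $\|\xi_t^b\| \leq 2M_t$ to get the slightly sharper constant $4$, whereas the paper invokes Jensen to reuse the already-established bound $\E_t[\|g_t - \pa_t\|^2] \leq 10\sigma^p M_t^{2-p}$.
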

Several similar results (except the bound on $\|\xi_{t}^{b}\|^{2}$)
to Lemma \ref{lem:err-bound} appear in \cite{zhang2020adaptive,gorbunov2020stochastic,gorbunov2021near,zhang2022parameter,sadiev2023high,nguyen2023high,liu2023breaking}
before. However, every existing analysis only considers $\ell_{2}$
norm. When a general norm is used, we provide an extended version
of Lemma \ref{lem:err-bound}, Lemma \ref{lem:err-bound-general}
in Section \ref{sec:general-norm} in the appendix along with the
proof. We refer the interested reader to Section \ref{sec:general-norm}
for details.

From a high-level overview, Lemma \ref{lem:err-bound} tells us how
small the errors $\xi_{t}^{u}$ and $\xi_{t}^{b}$ can be when $M_{t}\geq2G$.
Note that the part of $\xi_{t}^{u}$ will get larger as $M_{t}$ becomes
bigger, in contrast, the error $\xi_{t}^{b}$ (consider the bound
of $\|\xi_{t}^{b}\|\leq2\sigma^{p}M_{t}^{1-p}$) will decrease since
$1-p<0$ given $p\in(1,2]$. So $M_{t}$ should be chosen appropriately
to balance the order between $\|\xi_{t}^{u}\|$ and $\|\xi_{t}^{b}\|$.
Besides, note that our choice of $M_{t}$ always satisfies the condition
$M_{t}\geq2G$. Hereinafter, we will apply Lemma \ref{lem:err-bound}
directly in the analysis.
\begin{lem}
\label{lem:basic}For any $t\in\left[T\right]$, we have
\[
\Delta_{t}+\frac{\eta_{t}^{-1}}{2}d_{t+1}^{2}-\frac{\eta_{t}^{-1}-\mu}{2}d_{t}^{2}\leq\langle\xi_{t},x_{*}-x_{t}\rangle+\eta_{t}\left(2\left\Vert \xi_{t}^{u}\right\Vert ^{2}+2\left\Vert \xi_{t}^{b}\right\Vert ^{2}+G^{2}\right).
\]
\end{lem}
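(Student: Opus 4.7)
The inequality is the standard one-step descent lemma for projected stochastic subgradient descent, adapted to the clipped and biased estimator $g_t$. The plan is to combine three ingredients: (i) non-expansiveness of the projection $\Pi_{\dom}$, (ii) $\mu$-strong convexity of $F$ at the iterate $x_t$, and (iii) the decomposition $g_t=\pa_t+\xi_t^u+\xi_t^b$ together with the $G$-Lipschitz bound $\|\pa_t\|\le G$.

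The first step is to write the projection update $x_{t+1}=\Pi_\dom(x_t-\eta_t g_t)$, use $x_*\in\dom$, and apply the non-expansiveness of $\Pi_\dom$ to get
\[
d_{t+1}^2 \le \|x_t-\eta_t g_t-x_*\|^2 = d_t^2 - 2\eta_t\langle g_t, x_t-x_*\rangle + \eta_t^2\|g_t\|^2,
\]
which I rearrange into
\[
\langle g_t, x_t-x_*\rangle \le \frac{d_t^2-d_{t+1}^2}{2\eta_t} + \frac{\eta_t}{2}\|g_t\|^2.
\]

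The second step is to split $g_t=\pa_t+\xi_t$, giving $\langle g_t,x_t-x_*\rangle = \langle\pa_t,x_t-x_*\rangle - \langle\xi_t,x_*-x_t\rangle$, and then invoke $\mu$-strong convexity (Assumption 2, which includes $\mu=0$ as plain convexity) in the form $\Delta_t = F(x_t)-F(x_*) \le \langle\pa_t,x_t-x_*\rangle - \tfrac{\mu}{2}d_t^2$. Substituting this into the previous display yields
\[
\Delta_t + \tfrac{\mu}{2}d_t^2 + \tfrac{\eta_t^{-1}}{2}d_{t+1}^2 - \tfrac{\eta_t^{-1}}{2}d_t^2 \le \langle\xi_t,x_*-x_t\rangle + \tfrac{\eta_t}{2}\|g_t\|^2,
\]
which is exactly the desired left-hand side up to the $\|g_t\|^2$ term.

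The third step is to control $\tfrac{\eta_t}{2}\|g_t\|^2$. Writing $g_t=\pa_t+\xi_t^u+\xi_t^b$ and applying the elementary inequality $\|a+b\|^2\le 2\|a\|^2+2\|b\|^2$ twice (first to separate $\pa_t$ from $\xi_t=\xi_t^u+\xi_t^b$, then to separate $\xi_t^u$ from $\xi_t^b$) together with $\|\pa_t\|\le G$ gives
\[
\|g_t\|^2 \le 2\|\pa_t\|^2+2\|\xi_t\|^2 \le 2G^2 + 4\|\xi_t^u\|^2 + 4\|\xi_t^b\|^2,
\]
so $\tfrac{\eta_t}{2}\|g_t\|^2 \le \eta_t\bigl(G^2 + 2\|\xi_t^u\|^2+2\|\xi_t^b\|^2\bigr)$. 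Plugging this into the previous display and moving $\tfrac{\mu}{2}d_t^2$ into the coefficient of $d_t^2$ on the left gives exactly the claimed bound.

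Overall, the argument is routine and there is no serious obstacle: the only minor point worth care is keeping track of the sign conventions in the decomposition $\langle g_t,x_t-x_*\rangle$ versus the $\langle\xi_t,x_*-x_t\rangle$ appearing on the right-hand side, and choosing the splitting of $\|g_t\|^2$ that produces the exact constants $2,2,1$ in front of $\|\xi_t^u\|^2,\|\xi_t^b\|^2,G^2$ rather than a looser bound with a factor of $3$.
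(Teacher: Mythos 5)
Your proof is correct, but it takes a slightly different (and in fact more elementary) route than the paper does. You argue directly from non-expansiveness of the Euclidean projection: $d_{t+1}^2 \le \|x_t - \eta_t g_t - x_*\|^2$, expand, and then feed in strong convexity and the decomposition of $\|g_t\|^2$. The paper never states or uses non-expansiveness. Instead, it proves a general mirror-descent version (Lemma \ref{lem:basic-general}) that applies to an arbitrary norm and Bregman divergence $D_\psi$: it starts from the relative strong-convexity inequality, inserts $\langle g_t, x_{t+1}-x_*\rangle + \langle g_t, x_t-x_{t+1}\rangle + \langle\xi_t, x_*-x_t\rangle$, applies the first-order optimality condition for the proximal step (the three-point identity), lower-bounds $D_\psi(x_{t+1},x_t)$ by $\tfrac{1}{2}\|x_{t+1}-x_t\|^2$ via $1$-strong convexity of $\psi$, and then kills the residual cross term with Young's inequality to pick up $\tfrac{\eta_t}{2}\|g_t\|_*^2$; Lemma \ref{lem:basic} is then read off as the special case $\psi = \tfrac{1}{2}\|\cdot\|_2^2$. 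Your argument buys simplicity for the Euclidean case, which is all this lemma asks for. What the paper's argument buys is generality: it proves the Bregman/general-norm version at the same cost, which the appendix needs for the extension to arbitrary norms. In the $\ell_2$ case the two derivations are essentially algebraic rearrangements of one another, and your constants $2,2,1$ on $\|\xi_t^u\|^2,\|\xi_t^b\|^2,G^2$ match exactly.
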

Lemma \ref{lem:basic} is the basic inequality used to bound both
the function value gap $\Delta_{t}$ and the distance term $d_{t}^{2}$.
The same as Lemma \ref{lem:err-bound}, we will present a generalized
version (Lemma \ref{lem:basic-general} in Section \ref{sec:general-norm})
for the general norm with its proof in the appendix.

Now let us explain Lemma \ref{lem:basic} a bit more here. If $\xi_{t}=\bzero$,
then one can view Lemma \ref{lem:basic} as a one-step descent lemma.
However, even after taking the expectation on both sides, the term
$\E[\langle\xi_{t},x_{*}-x_{t}\rangle]$ won't vanish since $\xi_{t}=g_{t}-\pa_{t}$
but $g_{t}$ is not an unbiased gradient estimator of $\pa_{t}$ due
to the clipping. So one of the hard parts of the analysis is how to
deal with the term $\langle\xi_{t},x_{*}-x_{t}\rangle$ both in expectation
and in a high-probability way. 

Another challenge is to deal with $\|\xi_{t}^{u}\|^{2}$ and $\|\xi_{t}^{b}\|^{2}$.
For $\|\xi_{t}^{b}\|^{2}$, Lemma \ref{lem:err-bound} already tells
us how to bound it. As for $\|\xi_{t}^{u}\|^{2}$, though it can be
bounded by $4M_{t}^{2}$ (by Lemma \ref{lem:err-bound} again). However,
this simple bound is not enough to obtain the correct order. For the
in-expectation analysis, after taking expectations on both sides of
Lemma \ref{lem:basic}, we can instead use the bound $\E_{t}[\|\xi_{t}^{u}\|^{2}]\leq10\sigma^{p}M_{t}^{2-p}$
in Lemma \ref{lem:err-bound}, which turns to be in a strictly smaller
order compared with $4M_{t}^{2}$ under our choice of $M_{t}$. Hence,
for the more complicated high-probability analysis, a hint of using
the bound on $\E_{t}[\|\xi_{t}^{u}\|^{2}]$ arises from analyzing
the in-expectation bound. As such, a natrual decomposition, $\|\xi_{t}^{u}\|^{2}=\|\xi_{t}^{u}\|^{2}-\E_{t}[\|\xi_{t}^{u}\|^{2}]+\E_{t}[\|\xi_{t}^{u}\|^{2}]$,
shows up. We make this thought formally and show a high-probability
bound of $\|\xi_{t}^{u}\|^{2}-\E_{t}[\|\xi_{t}^{u}\|^{2}]$ in Lemma
\ref{lem:xi-u-lip} in next section.

\subsection{High-Probability Analysis when $\mu=0$\label{subsec:lip-prob}}

In this section, our ultimate goal is to prove the high-probability
convergence rate, i.e., Theorems \ref{thm:lip-prob} and \ref{thm:lip-prob-fix}.
To save space, only the lemmas used for the case of unknown $T$ will
be stated formally. We will describe how the lemmas will change for
known $T$ accordingly in the remarks.

First, we present Lemma \ref{lem:normalize}, which is a powerful
tool when specialized to the case $\mu=0$. Lemma \ref{lem:normalize}
is immediately obtained from the definition of $\eta_{t}$, hence,
the proof of which is omitted.
\begin{lem}
\label{lem:normalize}When $\mu=0$, under our choices of $M_{t}$
and $\eta_{t}$ whenever $T$ is known or not, for any $t\in\left[T\right]$,
we have
\[
\eta_{t}M_{t}\leq\alpha.
\]
\end{lem}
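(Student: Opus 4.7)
The plan is essentially a one-line unpacking of the definition of $\eta_t$, which is why the authors note the proof can be omitted. In both parameter regimes considered for the convex case ($\mu=0$), the step size is prescribed as a minimum of two positive quantities, one of which is $\alpha/M_t$. Concretely, for unknown $T$ we have $\eta_t = \frac{\alpha}{G\sqrt{t}} \land \frac{\alpha}{M_t}$, and for known $T$ we have $\eta_t = \frac{\alpha}{G\sqrt{T}} \land \frac{\alpha}{M_t}$. In either regime, the ``$\land$'' guarantees $\eta_t \leq \frac{\alpha}{M_t}$, and since $M_t \geq 2G > 0$ by construction, multiplying both sides by $M_t$ immediately yields $\eta_t M_t \leq \alpha$, which is exactly the claim.

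Thus there is no genuine obstacle to overcome: the step-size definition has been chosen precisely so that this normalization is automatic, and a single sentence invoking the definition suffices. The real interest of the lemma lies not in its proof but in its subsequent use; the identity $\eta_t M_t \leq \alpha$ is exactly what is needed to control the clipped-step displacement $\eta_t \|g_t\| \leq \eta_t M_t \leq \alpha$, and hence to tame error terms such as $\eta_t \|\xi_t^u\|^2$ and $\eta_t \|\xi_t^b\|^2$ arising from Lemma \ref{lem:basic} in the later convergence analyses. I would therefore state and prove the lemma in a single short paragraph and move on.
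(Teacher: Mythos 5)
Your proof is correct and is exactly the argument the paper has in mind: the step size is defined as a minimum whose second branch is $\alpha/M_t$, so $\eta_t \leq \alpha/M_t$ and hence $\eta_t M_t \leq \alpha$. The paper itself notes the lemma "is immediately obtained from the definition of $\eta_t$" and omits the proof; your write-up simply makes that explicit.
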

Next, we introduce Lemma \ref{lem:basic-lip-prob}, which can be viewed
as a finer result of Lemma \ref{lem:basic} for $\mu=0$. 
\begin{lem}
\label{lem:basic-lip-prob}When $\mu=0$, under the choices of $M_{t}=2G\lor Mt^{\frac{1}{p}}$
and $\eta_{t}=\frac{\alpha}{G\sqrt{t}}\land\frac{\alpha}{M_{t}}$,
for any $\tau\in\left[T\right]$, we have
\[
d_{\tau+1}^{2}+\sum_{t=1}^{\tau}2\eta_{t}\Delta_{t}\leq\mathfrak{D}_{\tau}\left(d_{1}+h(\tau)+\sum_{t=1}^{\tau}2\eta_{t}\left\langle \xi_{t},\frac{x_{*}-x_{t}}{\mathfrak{D}_{t}}\right\rangle +\frac{4\eta_{t}^{2}}{\alpha}\left(\left\Vert \xi_{t}^{u}\right\Vert ^{2}-\E_{t}\left[\left\Vert \xi_{t}^{u}\right\Vert ^{2}\right]\right)\right)
\]
where
\[
h(\tau)=2(1+40(\sigma/M)^{p})\alpha\log(e\tau).
\]
\end{lem}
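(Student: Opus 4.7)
The plan is to start from Lemma~\ref{lem:basic} with $\mu=0$, telescope the basic inequality, isolate a martingale-difference structure by decomposing $\|\xi_t^u\|^2$ around its conditional mean, and finally factor $\mathfrak{D}_\tau$ out of the right-hand side. Concretely, I would first multiply Lemma~\ref{lem:basic} (with $\mu=0$) by $2\eta_t$ and sum over $t=1,\ldots,\tau$; the $d_t^2$ terms telescope to yield
\[
d_{\tau+1}^2 + \sum_{t=1}^{\tau} 2\eta_t\Delta_t \le d_1^2 + \sum_{t=1}^{\tau} 2\eta_t\langle\xi_t,x_*-x_t\rangle + \sum_{t=1}^{\tau} 2\eta_t^2\bigl(2\|\xi_t^u\|^2 + 2\|\xi_t^b\|^2 + G^2\bigr).
\]

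Next I would decompose $\|\xi_t^u\|^2 = \bigl(\|\xi_t^u\|^2 - \E_t[\|\xi_t^u\|^2]\bigr) + \E_t[\|\xi_t^u\|^2]$ so that the first piece is a martingale-difference sequence. By Lemma~\ref{lem:err-bound} both $\E_t[\|\xi_t^u\|^2]$ and $\|\xi_t^b\|^2$ are bounded by $10\sigma^p M_t^{2-p}$, producing a purely deterministic noise contribution $\sum_t\bigl(80\eta_t^2\sigma^p M_t^{2-p} + 2\eta_t^2 G^2\bigr)$. Using Lemma~\ref{lem:normalize} ($\eta_t M_t\le\alpha$) together with $M_t\ge Mt^{1/p}$ gives $\eta_t^2 M_t^{2-p}\le \alpha^2 M^{-p}/t$, while $\eta_t\le \alpha/(G\sqrt t)$ from the other branch of the $\min$ in $\eta_t$ gives $\eta_t^2 G^2\le \alpha^2/t$; summing with $\sum_{t\le\tau}1/t\le\log(e\tau)$ collapses everything to at most $2\alpha^2\bigl(1+40(\sigma/M)^p\bigr)\log(e\tau) = \alpha\cdot h(\tau)$, matching the definition of $h$.

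Finally I would factor $\mathfrak{D}_\tau$ out of the right-hand side. Since $\mathfrak{D}_\tau\ge d_1$ and $\mathfrak{D}_\tau\ge\alpha$, the pieces $d_1^2\le d_1\mathfrak{D}_\tau$ and $\alpha h(\tau)\le\mathfrak{D}_\tau h(\tau)$ are immediate. For the two remaining sums I would rewrite $2\eta_t\langle\xi_t,x_*-x_t\rangle = \mathfrak{D}_t\cdot 2\eta_t\langle\xi_t,\tfrac{x_*-x_t}{\mathfrak{D}_t}\rangle$ and $4\eta_t^2\bigl(\|\xi_t^u\|^2-\E_t[\|\xi_t^u\|^2]\bigr) = \alpha\cdot \tfrac{4\eta_t^2}{\alpha}\bigl(\|\xi_t^u\|^2-\E_t[\|\xi_t^u\|^2]\bigr)$, and then try to pull out $\mathfrak{D}_\tau$ using $\mathfrak{D}_t\le\mathfrak{D}_\tau$ and $\alpha\le\mathfrak{D}_\tau$.

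The main obstacle is precisely this last factoring: because the per-step martingale summands are signed, naively replacing $\mathfrak{D}_t$ by $\mathfrak{D}_\tau$ (or $\alpha$ by $\mathfrak{D}_\tau$) reverses the inequality on negative contributions. I expect this to be resolved by a sign-aware handling of each term, possibly by combining the two martingale sums and spending the slack $\mathfrak{D}_\tau-\alpha\ge 0$ that is gained on the variance-reduction side to absorb the mismatch on the inner-product side in a single term-by-term bookkeeping. Everything else in the argument is mechanical manipulation of Lemmas~\ref{lem:basic}, \ref{lem:err-bound}, and \ref{lem:normalize}; this normalization step is the only place where the specific structure of $\mathfrak{D}_t$ as a predictable, non-decreasing envelope of $d_t$ really enters.
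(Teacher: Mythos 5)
Your proposal correctly identifies the crux, but the crux is a real gap that your outlined fix does not close. If you telescope first and only afterward try to factor out $\mathfrak{D}_{\tau}$, you need an inequality of the form $\sum_{t\le\tau}\mathfrak{D}_{t}a_{t}\le\mathfrak{D}_{\tau}\sum_{t\le\tau}a_{t}$ with $a_{t}=2\eta_{t}\langle\xi_{t},\tfrac{x_{*}-x_{t}}{\mathfrak{D}_{t}}\rangle$. This is false in general for signed $a_{t}$ (take $\tau=2$, $\mathfrak{D}_{1}=1<\mathfrak{D}_{2}=2$, $a_{1}=-1$, $a_{2}=1$). The ``spend the slack $\mathfrak{D}_{\tau}-\alpha$'' idea doesn't rescue it either: the mismatch across the inner-product sum can grow with $\tau$, whereas the slack you gain from the variance term is a single $O(1)$ factor per step; there is no clean term-by-term bookkeeping that works.

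The paper sidesteps this entirely by a different order of operations: it multiplies the one-step inequality from Lemma~\ref{lem:basic} by $\eta_{t}/\mathfrak{D}_{t}$ \emph{before} summing. The variance and $G^2$ terms on the right are then immediately bounded by replacing $1/\mathfrak{D}_{t}$ with $1/\alpha$ (valid because those terms are non-negative and $\mathfrak{D}_{t}\ge\alpha$), while the inner product already appears in its normalized form $\eta_{t}\langle\xi_{t},\tfrac{x_{*}-x_{t}}{\mathfrak{D}_{t}}\rangle$. Summing $\tfrac{d_{t+1}^{2}-d_{t}^{2}}{2\mathfrak{D}_{t}}$ is then an Abel summation whose remainder $\sum_{t\ge2}\bigl(\tfrac{1}{2\mathfrak{D}_{t-1}}-\tfrac{1}{2\mathfrak{D}_{t}}\bigr)d_{t}^{2}$ is non-negative because $\mathfrak{D}_{t}$ is non-decreasing, so it can be dropped. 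What remains is $\tfrac{d_{\tau+1}^{2}}{2\mathfrak{D}_{\tau}}-\tfrac{d_{1}^{2}}{2\mathfrak{D}_{1}}+\sum_{t}\tfrac{\eta_{t}\Delta_{t}}{\mathfrak{D}_{t}}$; using $\mathfrak{D}_{1}\ge d_{1}$ and $\mathfrak{D}_{t}\le\mathfrak{D}_{\tau}$ together with $\Delta_{t}\ge0$, then multiplying the whole inequality by $2\mathfrak{D}_{\tau}$, yields the claimed bound with no per-term sign issue. So the ``divide by $\mathfrak{D}_{t}$ first'' step is not mechanical bookkeeping — it is the essential maneuver missing from your plan. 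Your deterministic bound of the variance/$G^2$ part by $\alpha h(\tau)$ is otherwise correct and matches the paper's.
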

\begin{rem}
\label{rem:basic-lip-prob}For the case of known $T$, under the choices
of $M_{t}=2G\lor MT^{\frac{1}{p}}$ and $\eta_{t}=\frac{\alpha}{G\sqrt{T}}\land\frac{\alpha}{M_{t}}$,
$h(\tau)$ in Lemma \ref{lem:basic-lip-prob} will be $2(1+40(\sigma/M)^{p})\alpha$.
\end{rem}
Lemma \ref{lem:basic-lip-prob} is interesting in several ways. As
mentioned above, to use the conditional expectation bound on $\|\xi_{t}^{u}\|^{2}$,
the term $\eta_{t}^{2}(\|\xi_{t}^{u}\|^{2}-\E_{t}[\|\xi_{t}^{u}\|^{2}])$
appears, which can be bounded by Freedman's inequality (Lemma \ref{lem:freedman}).
Next, the inner product $\eta_{t}\langle\xi_{t},\frac{x_{*}-x_{t}}{\mathfrak{D}_{t}}\rangle$
seems very strange at first glance. However, this term can help us
to obtain an $O(\log(1/\delta))$ dependence on $\delta$ finally
instead of the sub-optimal $\log(T/\delta)$ shown in previous works.
We briefly explain why this is the right term here. Due to $\xi_{t}$
in the inner product, to use the bound on $\|\xi_{t}^{u}\|$ and $\|\xi_{t}^{b}\|$,
it is natural to consider $\eta_{t}\langle\xi_{t},\frac{x_{*}-x_{t}}{\mathfrak{D}_{t}}\rangle=\eta_{t}\langle\xi_{t}^{b}+\xi_{t}^{u},\frac{x_{*}-x_{t}}{\mathfrak{D}_{t}}\rangle$.
For the term $\sum_{t=1}^{\tau}\eta_{t}\langle\xi_{t}^{b},\frac{x_{*}-x_{t}}{\mathfrak{D}_{t}}\rangle$,
it will be bounded by $O(\log T)$ in the proof of Theorem \ref{thm:lip-prob}
directly. The other term, $\sum_{t=1}^{\tau}\eta_{t}\langle\xi_{t}^{u},\frac{x_{*}-x_{t}}{\mathfrak{D}_{t}}\rangle$,
is more interesting. A key observation is that $\eta_{t}\langle\xi_{t}^{u},\frac{x_{*}-x_{t}}{\mathfrak{D}_{t}}\rangle$
is a martingale difference sequence, which may allow us to use the
concentration inequality to bound the summation again. Because of
the divisor $\mathfrak{D}_{t}$, $\eta_{t}\langle\xi_{t}^{u},\frac{x_{*}-x_{t}}{\mathfrak{D}_{t}}\rangle$
admit an almost surely time uniform bound which implies we can apply
Freedman's inequality directly. As a result, we are able to obtain
a time-uniform high-probability bound at once.

In contrast, lots of existing works are to bound $\sum_{t=1}^{\tau}\eta_{t}\langle\xi_{t}^{u},x_{*}-x_{t}\rangle$,
however, $\eta_{t}\langle\xi_{t}^{u},x_{*}-x_{t}\rangle$ doesn't
have an almost surely bound necessarily. For such a reason, prior
works need to first bound the distance $d_{t}=\|x_{t}-x_{*}\|$ with
a high probability by induction then go back to bound $\sum_{t=1}^{\tau}\eta_{t}\langle\xi_{t}^{u},x_{*}-x_{t}\rangle$
for every $\tau$ by employing the high-probability bound on $d_{t}$.
This kind of roundabout argument leads to the sub-optimal dependence
of $\log(T/\delta)$.

Now we provide the desired bound described above in Lemma \ref{lem:lip-prob-concen},
As one can see, our bound holds for any $\tau\in[T]$ uniformly, hence,
which lifts the extra $\log T$ term. We refer the reader to Section
\ref{sec:app-missing-proofs} for more details of our proof of Lemma
\ref{lem:lip-prob-concen}.
\begin{lem}
\label{lem:lip-prob-concen}When $\mu=0$, under the choices of $M_{t}=2G\lor Mt^{\frac{1}{p}}$
and $\eta_{t}=\frac{\alpha}{G\sqrt{t}}\land\frac{\alpha}{M_{t}}$,
we have with probability at least $1-\frac{\delta}{2}$, for any $\tau\in\left[T\right]$,
\[
\sum_{t=1}^{\tau}\eta_{t}\left\langle \xi_{t}^{u},\frac{x_{*}-x_{t}}{\mathfrak{D}_{t}}\right\rangle \leq5\left(\log\frac{4}{\delta}+\sqrt{(\sigma/M)^{p}\log(eT)\log\frac{4}{\delta}}\right)\alpha.
\]
\end{lem}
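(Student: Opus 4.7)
The plan is to recognize that $Z_t \coloneqq \eta_t \langle \xi_t^u, (x_*-x_t)/\mathfrak{D}_t\rangle$ forms a martingale difference sequence adapted to $\{\F_t\}$ with both an almost-sure pointwise bound and a summable conditional variance, and then to invoke a time-uniform Freedman's inequality. Verification of the martingale difference property is immediate: $\eta_t$, $x_t$, $x_*$ and $\mathfrak{D}_t$ are all $\F_{t-1}$-measurable while $\E_t[\xi_t^u] = \E_t[g_t] - \E_t[g_t] = 0$, so $\E_t[Z_t] = 0$.

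First I would obtain the deterministic bound $|Z_t|\leq 2\alpha$. By Cauchy--Schwarz, $|Z_t| \leq \eta_t\|\xi_t^u\|\cdot \|x_*-x_t\|/\mathfrak{D}_t$. The definition $\mathfrak{D}_t = D_t \lor \alpha$ gives $\|x_*-x_t\| = d_t \leq D_t \leq \mathfrak{D}_t$, so the ratio is at most $1$. Combining $\|\xi_t^u\| \leq 2M_t$ from Lemma \ref{lem:err-bound} with $\eta_t M_t \leq \alpha$ from Lemma \ref{lem:normalize} yields $|Z_t|\leq 2\alpha$ almost surely. This step is really the heart of the argument: dividing by $\mathfrak{D}_t$ turns the unbounded factor $\|x_*-x_t\|$ into a trivially bounded quantity, which in turn gives the sought $\log(1/\delta)$ rather than the usual $\log(T/\delta)$ that comes from first controlling $d_t$ by induction.

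Next I would bound the sum of conditional second moments deterministically. Again by Cauchy--Schwarz and $\|x_*-x_t\|/\mathfrak{D}_t\leq 1$, Lemma \ref{lem:err-bound} gives $\E_t[Z_t^2] \leq \eta_t^2\,\E_t[\|\xi_t^u\|^2] \leq 10\sigma^p\eta_t^2 M_t^{2-p}$. To simplify, I would write $\eta_t^2 M_t^{2-p} = (\eta_t M_t)^2 / M_t^p \leq \alpha^2 / M_t^p$, and then use $M_t \geq Mt^{1/p}$ to get $M_t^p \geq M^p t$, so $\E_t[Z_t^2] \leq 10(\sigma/M)^p\alpha^2/t$. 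Summing over $t\in[T]$ and using $\sum_{t=1}^T 1/t \leq \log(eT)$ produces a deterministic total variance $V \coloneqq 10(\sigma/M)^p\alpha^2\log(eT)$.

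Finally I would apply a time-uniform Freedman inequality: for a martingale difference sequence with $|Z_t|\leq R$ a.s.\ and $\sum_{t\leq T}\E_t[Z_t^2]\leq V$ deterministically, with probability at least $1-\delta/2$,
\[
\max_{\tau\in[T]}\sum_{t=1}^{\tau} Z_t \;\leq\; \sqrt{2V\log(4/\delta)} + \tfrac{2}{3}R\log(4/\delta).
\]
Plugging in $R=2\alpha$ and the value of $V$, and absorbing numerical constants, delivers exactly the bound $5\alpha\bigl(\log(4/\delta) + \sqrt{(\sigma/M)^p\log(eT)\log(4/\delta)}\bigr)$ claimed by the lemma. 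I expect no real obstacle beyond the bookkeeping above; the only subtle point is conceptual, namely choosing to normalize by $\mathfrak{D}_t$ before applying Freedman so that the a.s.\ bound is deterministic and the concentration argument can be invoked directly, without an induction on $\tau$.
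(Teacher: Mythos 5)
Your proposal is correct and mirrors the paper's proof step for step: same martingale difference observation, same $|Z_t|\leq 2\alpha$ almost-sure bound via $d_t\leq\mathfrak{D}_t$, $\|\xi_t^u\|\leq 2M_t$ and $\eta_t M_t\leq\alpha$, same conditional variance bound $\E_t[Z_t^2]\leq 10(\sigma/M)^p\alpha^2/t$ yielding $F = 10(\sigma/M)^p\alpha^2\log(eT)$, and the same application of the time-uniform Freedman corollary with $R=2\alpha$. Nothing to add.
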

\begin{rem}
\label{rem:lib-prob-loglog}For the case of known $T$, under the
choices of $M_{t}=2G\lor MT^{\frac{1}{p}}$ and $\eta_{t}=\frac{\alpha}{G\sqrt{T}}\land\frac{\alpha}{M_{t}}$,
the term $\log(eT)$ in Lemma \ref{lem:lip-prob-concen} will be removed
(or replaced by $1$ equivalently).
\end{rem}
Next, in Lemma \ref{lem:xi-u-lip}, we provide an any time high-probability
bound of the term $\sum_{t=1}^{\tau}\frac{\eta_{t}^{2}}{\alpha}(\|\xi_{t}^{u}\|^{2}-\E_{t}[\|\xi_{t}^{u}\|^{2}])$
by using Freedman's inequality.
\begin{lem}
\label{lem:xi-u-lip}When $\mu=0$, under the choices of $M_{t}=2G\lor Mt^{\frac{1}{p}}$
and $\eta_{t}=\frac{\alpha}{G\sqrt{t}}\land\frac{\alpha}{M_{t}}$,
we have with probability at least $1-\frac{\delta}{2}$, for any $\tau\in\left[T\right]$,
\[
\sum_{t=1}^{\tau}\frac{\eta_{t}^{2}}{\alpha}\left(\left\Vert \xi_{t}^{u}\right\Vert ^{2}-\E_{t}\left[\left\Vert \xi_{t}^{u}\right\Vert ^{2}\right]\right)\leq9\left(\log\frac{4}{\delta}+\sqrt{(\sigma/M)^{p}\log(eT)\log\frac{4}{\delta}}\right)\alpha.
\]
\end{lem}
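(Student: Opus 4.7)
The plan is to recognize that $X_t := \frac{\eta_t^2}{\alpha}(\|\xi_t^u\|^2 - \E_t[\|\xi_t^u\|^2])$ is a martingale difference sequence with respect to the filtration $\{\F_t\}$, and then to apply a time-uniform Freedman-type concentration inequality. The bound in the statement has the characteristic Bernstein shape $\log(1/\delta) + \sqrt{V\log(1/\delta)}$, which strongly signals Freedman as the right tool. To apply it I need two ingredients: an almost sure uniform upper bound on $|X_t|$, and a summable upper bound on the conditional second moments $\sum_{t=1}^\tau \E_t[X_t^2]$.

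For the almost sure bound, I would use Lemma \ref{lem:err-bound} to get $\|\xi_t^u\|^2 \leq 4M_t^2$ and hence also $\E_t[\|\xi_t^u\|^2] \leq 4M_t^2$, which together give $|\|\xi_t^u\|^2 - \E_t[\|\xi_t^u\|^2]| \leq 8M_t^2$. Multiplying by $\eta_t^2/\alpha$ and applying Lemma \ref{lem:normalize} (which tells us $\eta_t M_t \leq \alpha$) yields $|X_t| \leq 8\alpha$ almost surely, i.e., a deterministic constant independent of $t$.

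For the conditional variance, I would estimate
\[
\E_t[X_t^2] \leq \tfrac{\eta_t^4}{\alpha^2}\E_t[\|\xi_t^u\|^4] \leq \tfrac{\eta_t^4}{\alpha^2}\cdot 4M_t^2\cdot \E_t[\|\xi_t^u\|^2] \leq \tfrac{\eta_t^4}{\alpha^2}\cdot 40\,\sigma^p M_t^{4-p},
\]
again via Lemma \ref{lem:err-bound}. Then I collapse the extra powers of $\eta_t M_t$ using Lemma \ref{lem:normalize} twice: $\eta_t^4 M_t^{4-p} = (\eta_t M_t)^2\cdot \eta_t^2 M_t^{2-p} \leq \alpha^2\cdot \eta_t^2 M_t^{2-p} \leq \alpha^4 M_t^{-p}$. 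Finally, since $M_t \geq Mt^{1/p}$, we have $M_t^{-p} \leq M^{-p}/t$, so
\[
\sum_{t=1}^{T}\E_t[X_t^2] \leq 40(\sigma/M)^p\,\alpha^2\sum_{t=1}^{T}\tfrac{1}{t} \leq 40(\sigma/M)^p\,\alpha^2\log(eT)
\]
almost surely, so the conditional variance process is dominated by a deterministic constant for every $\tau \in [T]$ simultaneously.

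With $b = 8\alpha$ and $V = 40(\sigma/M)^p\alpha^2\log(eT)$, Freedman's inequality (in its time-uniform form, which bounds $\max_{\tau\leq T}\sum_{t=1}^\tau X_t$ on the event that the predictable quadratic variation never exceeds $V$) gives, with probability at least $1-\delta/2$ and uniformly in $\tau\in[T]$,
\[
\sum_{t=1}^{\tau}X_t \leq \sqrt{2V\log(2/\delta)} + \tfrac{2b}{3}\log(2/\delta) \leq 9\Bigl(\log\tfrac{4}{\delta} + \sqrt{(\sigma/M)^p\log(eT)\log\tfrac{4}{\delta}}\Bigr)\alpha,
\]
after absorbing the universal constants. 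The main obstacle is a bookkeeping one: tracking constants carefully so that the final prefactor matches the stated $9$, and ensuring that the time-uniform version of Freedman is applied correctly (so that the $\log(eT)$ factor inside the square root, rather than $\log(e\tau)$, is legitimate for every $\tau$ simultaneously). For the companion case of known $T$ alluded to in Remark \ref{rem:lib-prob-loglog}, the step sizes and clipping thresholds are constant in $t$, so the $\sum 1/t$ telescoping step is replaced by a plain sum with $T$ identical terms divided by $T$, removing the $\log(eT)$ factor.
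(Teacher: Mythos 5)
Your proposal matches the paper's proof essentially step for step: you form the same martingale difference sequence, derive the same almost-sure bound $|X_t|\leq 8\alpha$ via Lemma \ref{lem:err-bound} and Lemma \ref{lem:normalize}, estimate the conditional variance the same way through $\E_t[\|\xi_t^u\|^4]\leq 4M_t^2\cdot 10\sigma^p M_t^{2-p}$ and $M_t^{-p}\leq M^{-p}/t$, and invoke the same time-uniform Freedman corollary (Corollary \ref{cor:ez-any-freedman-1}). The only cosmetic difference is that you normalize by $\alpha$ up front while the paper divides at the very end, and you briefly write $\log(2/\delta)$ where the paper's corollary produces $\log(4/\delta)$ — harmless since you then upper-bound by the $\log(4/\delta)$ form.
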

\begin{rem}
\label{rem:xi-u-lip}For the case of known $T$, under the choices
of $M_{t}=2G\lor MT^{\frac{1}{p}}$ and $\eta_{t}=\frac{\alpha}{G\sqrt{T}}\land\frac{\alpha}{M_{t}}$,
the term $\log(eT)$ in both bounds in Lemma \ref{lem:xi-u-lip} will
be removed (or replaced by $1$ equivalently).
\end{rem}
Equipped with the above lemmas, we are finally able to prove Theorems
\ref{thm:lip-prob} and \ref{thm:lip-prob-fix}.

\subsubsection{Proof of Theorem \ref{thm:lip-prob} }

\begin{proof}[Proof of Theorem \ref{thm:lip-prob}]
We first define a constant $K$ as follows
\begin{align}
K\coloneqq & \alpha^{2}+\left(d_{1}+2\alpha\log(eT)+107(\sigma/M)^{p}\alpha\log(eT)+69\alpha\log\frac{4}{\delta}\right)^{2}\label{eq:lip-prob-def-k}\\
= & O\left(\alpha^{2}\left((1+(\sigma/M)^{2p})\log^{2}T+\log^{2}(1/\delta)\right)+\left\Vert x_{1}-x_{*}\right\Vert ^{2}\right).\nonumber 
\end{align}

We sart with Lemma \ref{lem:basic-lip-prob} to get for any $\tau\in\left[T\right]$,
there is
\begin{align}
d_{\tau+1}^{2}+\sum_{t=1}^{\tau}2\eta_{t}\Delta_{t}\leq & \mathfrak{D}_{\tau}\left(d_{1}+h(\tau)+\sum_{t=1}^{\tau}2\eta_{t}\left\langle \xi_{t},\frac{x_{*}-x_{t}}{\mathfrak{D}_{t}}\right\rangle +\frac{4\eta_{t}^{2}}{\alpha}\left(\left\Vert \xi_{t}^{u}\right\Vert ^{2}-\E_{t}\left[\left\Vert \xi_{t}^{u}\right\Vert ^{2}\right]\right)\right)\nonumber \\
= & \mathfrak{D}_{\tau}\left(d_{1}+h(\tau)+\sum_{t=1}^{\tau}2\eta_{t}\left\langle \xi_{t}^{b},\frac{x_{*}-x_{t}}{\mathfrak{D}_{t}}\right\rangle +2\eta_{t}\left\langle \xi_{t}^{u},\frac{x_{*}-x_{t}}{\mathfrak{D}_{t}}\right\rangle +\frac{4\eta_{t}^{2}}{\alpha}\left(\left\Vert \xi_{t}^{u}\right\Vert ^{2}-\E_{t}\left[\left\Vert \xi_{t}^{u}\right\Vert ^{2}\right]\right)\right)\label{eq:lip-1}
\end{align}
\begin{itemize}
\item Bounding the term $\sum_{t=1}^{\tau}2\eta_{t}\left\langle \xi_{t}^{b},\frac{x_{*}-x_{t}}{\mathfrak{D}_{t}}\right\rangle $:
We know
\begin{align}
\sum_{t=1}^{\tau}2\eta_{t}\left\langle \xi_{t}^{b},\frac{x_{*}-x_{t}}{\mathfrak{D}_{t}}\right\rangle  & \leq\sum_{t=1}^{\tau}2\eta_{t}\left\Vert \xi_{t}^{b}\right\Vert \frac{d_{t}}{\mathfrak{D}_{t}}\overset{(a)}{\leq}\sum_{t=1}^{\tau}2\cdot\eta_{t}\cdot2\sigma^{p}M_{t}^{1-p}\nonumber \\
 & \overset{(b)}{\leq}\sum_{t=1}^{\tau}\frac{4(\sigma/M)^{p}\alpha}{t}\leq4(\sigma/M)^{p}\alpha\log(e\tau)\label{eq:lip-xi-b}
\end{align}
where $(a)$ is due to $\left\Vert \xi_{t}^{b}\right\Vert \leq2\sigma^{p}M_{t}^{1-p}$
by Lemma \ref{lem:err-bound} and $d_{t}\leq\mathfrak{D}_{t}$. $(b)$
is by $\eta_{t}M_{t}\leq\alpha$ from Lemma \ref{lem:normalize} and
$M_{t}\geq Mt^{\frac{1}{p}}$ from our choice.
\item Bounding the term $\sum_{t=1}^{\tau}2\eta_{t}\left\langle \xi_{t}^{u},\frac{x_{*}-x_{t}}{\mathfrak{D}_{t}}\right\rangle $:
By Lemma \ref{lem:lip-prob-concen}, we have with probability at least
$1-\frac{\delta}{2}$, for any $\tau\in\left[T\right]$:
\begin{equation}
\sum_{t=1}^{\tau}2\eta_{t}\left\langle \xi_{t}^{u},\frac{x_{*}-x_{t}}{\mathfrak{D}_{t}}\right\rangle \leq10\left(\log\frac{4}{\delta}+\sqrt{(\sigma/M)^{p}\log(eT)\log\frac{4}{\delta}}\right)\alpha.\label{eq:lip-xi-u}
\end{equation}
\item Bounding the term $\sum_{t=1}^{\tau}\frac{4\eta_{t}^{2}}{\alpha}\left(\left\Vert \xi_{t}^{u}\right\Vert ^{2}-\E_{t}\left[\left\Vert \xi_{t}^{u}\right\Vert ^{2}\right]\right)$:
By Lemma \ref{lem:xi-u-lip}, we have with probability at least $1-\frac{\delta}{2}$,
for any $\tau\in\left[T\right]$:
\begin{equation}
\sum_{t=1}^{\tau}\frac{4\eta_{t}^{2}}{\alpha}\left(\left\Vert \xi_{t}^{u}\right\Vert ^{2}-\E_{t}\left[\left\Vert \xi_{t}^{u}\right\Vert ^{2}\right]\right)\leq36\left(\log\frac{4}{\delta}+\sqrt{(\sigma/M)^{p}\log(eT)\log\frac{4}{\delta}}\right)\alpha.\label{eq:lip-xi-u-2}
\end{equation}
\end{itemize}
Combining (\ref{eq:lip-1}), (\ref{eq:lip-xi-b}), (\ref{eq:lip-xi-u})
and (\ref{eq:lip-xi-u-2}), we have with probability at least $1-\delta$,
for any $\tau\in\left[T\right]$:
\begin{align*}
d_{\tau+1}^{2}+\sum_{t=1}^{\tau}2\eta_{t}\Delta_{t}\leq & \mathfrak{D}_{\tau}\left(d_{1}+h(\tau)+4(\sigma/M)^{p}\alpha\log(e\tau)+46\left(\log\frac{4}{\delta}+\sqrt{(\sigma/M)^{p}\log(eT)\log\frac{4}{\delta}}\right)\alpha\right)\\
\overset{(c)}{\leq} & \mathfrak{D}_{\tau}\left(d_{1}+2\alpha\log(eT)+84(\sigma/M)^{p}\alpha\log(eT)+46\left(\log\frac{4}{\delta}+\sqrt{(\sigma/M)^{p}\log(eT)\log\frac{4}{\delta}}\right)\alpha\right)\\
\leq & \mathfrak{D}_{\tau}\left(d_{1}+2\alpha\log(eT)+107(\sigma/M)^{p}\alpha\log(eT)+69\alpha\log\frac{4}{\delta}\right)\\
\leq & \frac{\mathfrak{D_{\tau}^{2}}+\left(d_{1}+2\alpha\log(eT)+107(\sigma/M)^{p}\alpha\log(eT)+69\alpha\log\frac{4}{\delta}\right)^{2}}{2}\\
\overset{(d)}{\leq} & \frac{D_{\tau}^{2}+\alpha^{2}+\left(d_{1}+2\alpha\log(eT)+107(\sigma/M)^{p}\alpha\log(eT)+69\alpha\log\frac{4}{\delta}\right)^{2}}{2}\\
\overset{(e)}{=} & \frac{D_{\tau}^{2}+K}{2}
\end{align*}
where $(c)$ is by plugging in $h(\tau)=2(1+40(\sigma/M)^{p})\alpha\log(e\tau)\leq2(1+40(\sigma/M)^{p})\alpha\log(eT)$;
$(d)$ is by $\mathfrak{D}_{\tau}^{2}=(D_{\tau}\lor\alpha)^{2}\leq D_{\tau}^{2}+\alpha^{2}$;
$(e)$ is due to the definition of $K$ (see (\ref{eq:lip-prob-def-k})).
Hence, by using $\Delta_{t}\geq0$, we have for any $\tau\in\left[T\right]$,
\[
d_{\tau+1}^{2}\leq\frac{D_{\tau}^{2}+K}{2},
\]
which implies $d_{t}^{2}\leq D_{t}^{2}\leq K$ for any $t\in\left[T+1\right]$
by simple induction.

Finally, we consider time $T$ to get with probability at least $1-\delta$
\[
\sum_{t=1}^{T}2\eta_{t}\Delta_{t}\leq d_{T+1}^{2}+\sum_{t=1}^{T}2\eta_{t}\Delta_{t}\leq\frac{D_{T}^{2}+K}{2}\leq K.
\]
Note that $\eta_{t}$ is non-increasing and $F(\bar{x}_{T})-F(x_{*})\leq\frac{\sum_{t=1}^{T}\Delta_{t}}{T}$
by the convexity of $F$ where $\bar{x}_{T}=\frac{1}{T}\sum_{t=1}^{T}x_{t}$,
we conclude that
\[
F(\bar{x}_{T})-F(x_{*})\leq\frac{K}{2\eta_{T}T}.
\]
Plugging $K$ and $\eta_{t}$, we get the desired result.
\end{proof}

\subsubsection{Proof of Theorem \ref{thm:lip-prob-fix}}

\begin{proof}[Proof of Theorem \ref{thm:lip-prob-fix}]
Following a similar proof of Theorem \ref{thm:lip-prob}, we will
obtain that with probability at least $1-\delta$, for any $\tau\in\left[T\right]$,
there is
\[
d_{\tau+1}^{2}+\sum_{t=1}^{\tau}2\eta_{t}\Delta_{t}\leq K
\]
where $K$ is currently defined as
\begin{align}
K\coloneqq & \alpha^{2}+\left(d_{1}+2\alpha+107(\sigma/M)^{p}\alpha+69\alpha\log\frac{4}{\delta}\right)^{2}\label{eq:lip-prob-fix-def-k}\\
= & O\left(\alpha^{2}\left((\sigma/M)^{2p}+\log^{2}(1/\delta)\right)+\left\Vert x_{1}-x_{*}\right\Vert ^{2}\right).\nonumber 
\end{align}

Hence, we have with probability at least $1-\delta$,
\[
F(\bar{x}_{T})-F(x_{*})\leq\frac{1}{T}\sum_{t=1}^{T}\Delta_{t}\leq\frac{K}{2\eta_{T}T}
\]
where $\bar{x}_{T}=\frac{1}{T}\sum_{t=1}^{T}x_{t}$. Finally, plugging
in $K$ and $\eta_{T}$, the proof is finished.
\end{proof}

\subsection{In-Expectation Analysis when $\mu=0$\label{subsec:lip-exp}}

Now we turn to the in-expectation bound of Algorithm \ref{alg:algo}
for the general convex case. We introduce Lemma \ref{lem:basic-lip-exp}.
which is enough to let us prove Theorems \ref{thm:lip-exp} and \ref{thm:lip-exp-fix}.
\begin{lem}
\label{lem:basic-lip-exp}When $\mu=0$, for any $\tau\in\left[T\right]$,
we have
\[
\frac{\E\left[d_{\tau+1}^{2}\right]-\E\left[d_{1}^{2}\right]}{2}+\sum_{t=1}^{\tau}\eta_{t}\E\left[\Delta_{t}\right]\leq\frac{\alpha h(\tau)}{2}+2(\sigma/M)^{p}\alpha\sum_{t=1}^{\tau}\frac{\sqrt{\E\left[d_{t}^{2}\right]}}{t}
\]
where $h(\tau)$ is defined in Lemma \ref{lem:basic-lip-prob}.
\end{lem}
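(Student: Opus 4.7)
The plan is to start from the per-iteration inequality in Lemma \ref{lem:basic} specialized to $\mu=0$, multiply through by $\eta_t$, take total expectation, and then sum from $t=1$ to $\tau$. After multiplication, the key inequality reads
\[
\eta_t\Delta_t + \frac{d_{t+1}^2 - d_t^2}{2} \leq \eta_t\langle \xi_t, x_*-x_t\rangle + \eta_t^2\bigl(2\|\xi_t^u\|^2 + 2\|\xi_t^b\|^2 + G^2\bigr),
\]
so after telescoping the left-hand side one is left with bounding $\sum_t \E[\eta_t\langle\xi_t,x_*-x_t\rangle]$ and $\sum_t \E[\eta_t^2(2\|\xi_t^u\|^2+2\|\xi_t^b\|^2+G^2)]$.

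For the inner product, I would decompose $\xi_t = \xi_t^u + \xi_t^b$. Since $\eta_t$ and $x_t$ are $\F_{t-1}$-measurable while $\E_t[\xi_t^u]=0$, the unbiased part vanishes under tower expectation. For the bias part, Cauchy--Schwarz plus Lemma \ref{lem:err-bound} give $\eta_t\|\xi_t^b\| \leq 2\eta_t\sigma^p M_t^{1-p}$; using Lemma \ref{lem:normalize} ($\eta_tM_t\leq\alpha$) together with $M_t \geq Mt^{1/p}$ yields $\eta_t\|\xi_t^b\| \leq 2(\sigma/M)^p\alpha/t$. Applying Jensen's inequality to $\E[d_t]\leq\sqrt{\E[d_t^2]}$ then produces the second term on the right-hand side of the claim.

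For the noise-squared terms, the conditional bounds in Lemma \ref{lem:err-bound} give $\E_t[\|\xi_t^u\|^2]\leq 10\sigma^pM_t^{2-p}$ and $\|\xi_t^b\|^2 \leq 10\sigma^p M_t^{2-p}$, so
\[
\eta_t^2\bigl(2\E_t[\|\xi_t^u\|^2]+2\|\xi_t^b\|^2\bigr) \leq 40\,\eta_t^2\sigma^p M_t^{2-p} \leq 40(\sigma/M)^p\alpha^2/t,
\]
where the last step again uses $\eta_t M_t \leq \alpha$ together with $M_t \geq Mt^{1/p}$. For the $G^2$ term, $\eta_t \leq \alpha/(G\sqrt{t})$ gives $\eta_t^2G^2 \leq \alpha^2/t$. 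Summing over $t\in[\tau]$ and using $\sum_{t=1}^\tau 1/t \leq \log(e\tau)$ then yields $(1+40(\sigma/M)^p)\alpha^2\log(e\tau) = \alpha h(\tau)/2$, which is the first term on the right-hand side of the lemma.

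There is no real obstacle here; the content is essentially routine once Lemmas \ref{lem:err-bound}, \ref{lem:normalize}, and \ref{lem:basic} are in hand. The only subtlety worth flagging is why the final bound contains $\sqrt{\E[d_t^2]}$ rather than $\E[d_t^2]$: this is forced by the linear (not quadratic) control of $\|\xi_t^b\|$ coming from the heavy-tailed bias, so one must keep the $d_t$ factor and apply Jensen rather than an AM--GM split, in order for the resulting recursion on $\E[d_t^2]$ to be tight enough to give the advertised rate in Theorems \ref{thm:lip-exp} and \ref{thm:lip-exp-fix}. The proof for the fixed-$T$ variant is analogous and replaces $\log(e\tau)$ by a constant exactly as in Remark \ref{rem:basic-lip-prob}.
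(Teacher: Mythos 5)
Your proposal is correct and follows essentially the same route as the paper: multiply the per-iteration inequality from Lemma \ref{lem:basic} by $\eta_t$, telescope, take expectations so that both $\E[\langle\xi_t^u,x_*-x_t\rangle]$ and the centered $\|\xi_t^u\|^2$ fluctuation vanish by the tower property, bound the bias term via Cauchy--Schwarz, Lemma \ref{lem:err-bound}, Lemma \ref{lem:normalize}, and $\E[d_t]\leq\sqrt{\E[d_t^2]}$, and bound the noise-squared/$G^2$ terms to obtain $\alpha h(\tau)/2$. The only cosmetic difference is that the paper writes out the martingale-difference decomposition of $\|\xi_t^u\|^2$ explicitly so it can quote an intermediate display from the proof of Lemma \ref{lem:basic-lip-prob}, whereas you pass directly to $\E[\|\xi_t^u\|^2]=\E[\E_t[\|\xi_t^u\|^2]]$; the arithmetic and constants come out the same.
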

\begin{rem}
For the case of known $T$, under the choices of $M_{t}=2G\lor MT^{\frac{1}{p}}$
and $\eta_{t}=\frac{\alpha}{G\sqrt{T}}\land\frac{\alpha}{M_{t}}$,
in Lemma \ref{lem:basic-lip-exp}, $\sqrt{\E[d_{t}^{2}]}/t$ will
be replaced by $\sqrt{\E[d_{t}^{2}]}/T$ and $h(\tau)$ will be changed
according to Remark \ref{rem:basic-lip-prob}.
\end{rem}
Note that Lemma \ref{lem:basic-lip-prob} is a very interesting result.
On the L.H.S., there is $\E[d_{\tau+1}^{2}]$, however, the R.H.S.
only has $\sqrt{\E[d_{t}^{2}]}$ for any $t\le\tau$. If we assume
$\E[d_{t}^{2}]$ can be uniformly bounded by some $K$ for any $t\leq\tau$,
we know $\sum_{t=1}^{\tau}\sqrt{\E[d_{t}^{2}]}/t=O(\sqrt{K}\log T)$,
which implies $\E[d_{\tau+1}^{2}]=O(\alpha h(\tau)+(\sigma/M)^{p}\alpha\sqrt{K}\log T+d_{1}^{2})$.
Such a result tells us $\E[d_{\tau+1}^{2}]$ can still be bounded
by $K$ once $K$ is picked properly. So we can expect a uniform bound
on $\max_{s\in\left[t\right]}\E[d_{s}^{2}]$ for any $t\leq T$. We
will show how to demonstrate this idea formally in the proof of Theorems
\ref{thm:lip-exp} and \ref{thm:lip-exp-fix}.

\subsubsection{Proof of Theorem \ref{thm:lip-exp}}

\begin{proof}[Proof of Theorem \ref{thm:lip-exp}]
We first define the notation $\mathcal{D}_{t}=\max_{s\in\left[t\right]}\sqrt{\E\left[d_{s}^{2}\right]}$.
Next, consider the following constant
\begin{align}
K\coloneqq & 16(\sigma/M)^{2p}\alpha^{2}\log^{2}(eT)+4(1+40(\sigma/M)^{p})\alpha^{2}\log(eT)+2d_{1}^{2}\label{eq:lip-exp-def-K}\\
= & O\left(\alpha^{2}\left(\log T+(\sigma/M)^{2p}\log^{2}T\right)+\left\Vert x_{1}-x_{*}\right\Vert ^{2}\right).\nonumber 
\end{align}

We invoke Lemma \ref{lem:basic-lip-exp} to get for any time $\tau\in\left[T\right]$,
\begin{align*}
\frac{\E\left[d_{\tau+1}^{2}\right]-\E\left[d_{1}^{2}\right]}{2}+\sum_{t=1}^{\tau}\eta_{t}\E\left[\Delta_{t}\right]\leq & \frac{\alpha h(\tau)}{2}+2(\sigma/M)^{p}\alpha\sum_{t=1}^{\tau}\frac{\sqrt{\E\left[d_{t}^{2}\right]}}{t}\\
\overset{(a)}{\leq} & \frac{\alpha h(\tau)}{2}+2\alpha\sum_{t=1}^{\tau}\frac{\mathcal{D}_{\tau}}{t}\leq\frac{\alpha h(\tau)}{2}+2(\sigma/M)^{p}\alpha\log(eT)\mathcal{D}_{\tau}\\
\Rightarrow\frac{\E\left[d_{\tau+1}^{2}\right]}{2}+\sum_{t=1}^{\tau}\eta_{t}\E\left[\Delta_{t}\right]\overset{(b)}{\leq} & \frac{\mathcal{D}_{\tau}^{2}}{4}+4(\sigma/M)^{2p}\alpha^{2}\log^{2}(eT)+(1+40(\sigma/M)^{p})\alpha^{2}\log(eT)+\frac{d_{1}^{2}}{2}\\
\overset{(c)}{=} & \frac{\mathcal{D}_{\tau}^{2}}{4}+\frac{K}{4}
\end{align*}
where $(a)$ is by the definition of $\mathcal{D}_{\tau}$; $(b)$
is by using due to AM-GM inequality for the term $2(\sigma/M)^{p}\alpha\log(eT)\mathcal{D}_{\tau}$
and plugging in $\alpha h(\tau)/2=(1+40(\sigma/M)^{p})\alpha^{2}\log(e\tau)\leq(1+40(\sigma/M)^{p})\alpha^{2}\log(eT)$;
$(c)$ is from the definition of $K$ (see (\ref{eq:lip-exp-def-K})).
Thus, for any $\tau\in\left[T\right]$, there is
\[
\frac{\E\left[d_{\tau+1}^{2}\right]}{2}\leq\frac{\E\left[d_{\tau+1}^{2}\right]}{2}+\sum_{t=1}^{\tau}\eta_{t}\E\left[\Delta_{t}\right]\leq\frac{\mathcal{D}_{\tau}^{2}}{4}+\frac{K}{4}\Rightarrow d_{\tau+1}^{2}\leq\frac{\mathcal{D}_{\tau}^{2}}{2}+\frac{K}{2},
\]
which implies $\E\left[d_{t}^{2}\right]\leq\mathcal{D}_{t}^{2}\leq K$
for any $t\in\left[T+1\right]$ by simple induction.

Finally, for time $T$, we know
\[
\sum_{t=1}^{T}\eta_{t}\E\left[\Delta_{t}\right]\leq\frac{\E\left[d_{T+1}^{2}\right]}{2}+\sum_{t=1}^{T}\eta_{t}\E\left[\Delta_{t}\right]\leq\frac{\mathcal{D}_{T}^{2}}{4}+\frac{K}{4}\leq\frac{K}{2}.
\]
Note that is $\eta_{t}$ non-increasing and $\E\left[F(\bar{x}_{T})-F(x_{*})\right]\leq\frac{\sum_{t=1}^{T}\E\left[\Delta_{t}\right]}{T}$
by the convexity of $F$ where $\bar{x}_{T}=\frac{1}{T}\sum_{t=1}^{T}x_{t}$,
we conclude that
\[
\E\left[F(\bar{x}_{T})-F(x_{*})\right]\leq\frac{K}{2\eta_{T}T}.
\]
Plugging $K$ and $\eta_{t}$, we get the desired result.
\end{proof}

\subsubsection{Proof of Theorem \ref{thm:lip-exp-fix}.}

\begin{proof}[Proof of Theorem \ref{thm:lip-exp-fix}]
Following the same line in the proof of Theorem \ref{thm:lip-exp},
we only need to notice that the constant $K$ now is defined as
\begin{align}
K\coloneqq & 16(\sigma/M)^{2p}\alpha^{2}+4(1+40(\sigma/M)^{p})\alpha^{2}+2d_{1}^{2}\label{eq:lip-exp-known-def-K}\\
= & O\left(\alpha^{2}\left(1+(\sigma/M)^{2p}\right)+\left\Vert x_{1}-x_{*}\right\Vert ^{2}\right).\nonumber 
\end{align}
By similar steps, we will reach $\E\left[F(\bar{x}_{T})-F(x_{*})\right]\leq\frac{K}{2\eta_{T}T}$
again where $\bar{x}_{T}=\frac{1}{T}\sum_{t=1}^{T}x_{t}$. Plugging
$K$ and $\eta_{t}$ for two cases respectively, we get the desired
result.
\end{proof}

\section{Conclusion\label{sec:conclusion}}

In this paper, we present a comprehensive analysis of stochastic
nonsmooth optimization with heavy-tailed noises and obtain several
new results. More specifically, under properly picked parameters,
we show a simple clipping algorithm provably converges both in expectation
and probability for convex or strongly convex objectives. Furthermore,
no matter whether the time horizon $T$ or noise level $\sigma$ is
known or not, our choices of clipping magnitude and step size still
guarantee (nearly) optimal in-expectation and high-probability rates.

However, there still remains an interesting direction worth exploring.
The same as the previous works, our results heavily rely on the prior
knowledge of $p$, $G$ and $\mu$ (when considering strongly convex
functions), all of which may be hard to estimate in practice. Hence,
finding an algorithm without requiring any parameters is very important
for both theoretical and practical sides. We leave this important
question as future work and expect it to be addressed.

\clearpage

\bibliographystyle{plain}
\bibliography{ref}

\newpage

\appendix
\onecolumn

\section{Extension to Arbitrary Norms\label{sec:general-norm}}

In this section, we relax the $\ell_{2}$ norm used in the previous
assumptions (see Section \ref{sec:Preliminaries}) to an arbitrary
norm $\|\cdot\|$ on $\R^{d}$. $\|\cdot\|_{*}$ denotes the dual
norm of $\|\cdot\|$ induced by $\langle\cdot,\cdot\rangle$. Additionally,
let $\psi$ be a differentiable and $1$-strongly convex function
with respect to $\|\cdot\|$ on $\dom$, i.e.,
\[
\psi(x)\geq\psi(y)+\langle\na\psi(y),x-y\rangle+\frac{1}{2}\left\Vert x-y\right\Vert ^{2},\forall x,y\in\dom.
\]
We note that, rigorously speaking, $y$ can only be chosen in $\mathrm{int}(\dom)$.
However, one can think there is $\dom\subseteq\mathrm{int}(\mathrm{dom}(\psi))$
to avoid this potential issue. Now, define the Bregman divergence
with respect to $\psi$ as
\[
D_{\psi}(x,y)=\psi(x)-\psi(y)-\langle\na\psi(y),x-y\rangle.
\]
Note that $D_{\psi}(x,y)\geq\frac{1}{2}\|x-y\|^{2}$ from the $1$-strongly
convexity assumption of $\psi$. In particular $D_{\psi}(x,y)=\frac{\|x-y\|_{2}^{2}}{2}$
when considering $\|\cdot\|=\|\cdot\|_{2}$ as used in the main text
and $\psi(x)=\frac{1}{2}\|x\|_{2}^{2}$.

\subsection{New Assumptions and A Useful Fact}

With the above preparations, we can provide new assumptions under
the general norm.

\textbf{1. Existence of a local minimizer}: $\exists x_{*}\in\argmin_{x\in\dom}F(x)$
satisfying $F(x_{*})>-\infty$.

\textbf{2'. Relatively }$\mu$\textbf{-strongly convex}: $\exists\mu\geq0$
such that $F(x)\geq F(y)+\langle g,x-y\rangle+\mu D_{\psi}(x,y),\forall x,y\in\dom,g\in\pa F(y)$.

\textbf{3'. }$G$\textbf{-Lipschitz}: $\exists G>0$ such that $\|g\|_{*}\leq G,\forall x\in\dom,g\in\pa F(x)$ 

\textbf{4. Unbiased gradient estimator}: We are able to access a history-independent,
unbiased gradient estimator $\hp F(x)$ for any $x\in\dom$. In other
words, $\E[\widehat{\pa}F(x)\vert x]\in\pa F(x),\forall x\in\dom$.

\textbf{5'. Bounded $p$th moment noise}: There exist $p\in(1,2]$
and $\sigma\geq0$ denoting the noise level such that $\E[\|\hp F(x)-\E[\widehat{\pa}F(x)\vert x]\|_{*}^{p}\vert x]\leq\sigma^{p}$.

The concept of relatively strong convexity in Assumption 2' is introduced
in \cite{lu2018relatively}. Note that when $\|\cdot\|=\|\cdot\|_{2}$,
Assumptions 2', 3' and 5' are the same as Assumptions 2, 3 and 5 in
Section \ref{sec:Preliminaries}. Hence, these new assumptions are
more general. Next, we provide a useful fact under Assumptions 1,
2' and 3'. This result can help us to simplify the final bound in
the proof of Theorem \ref{thm:str-prob}.
\begin{fact}
\label{fact:fact}Under Assumption 1, 2' and 3' with $\mu>0$, there
is
\[
\left\Vert x-x_{*}\right\Vert ^{2}\leq2D_{\psi}(x,x_{*})\leq\frac{G^{2}}{\mu^{2}},\forall x\in\dom.
\]
\end{fact}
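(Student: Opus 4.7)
The plan is to prove the two inequalities separately. The first inequality, $\|x - x_*\|^2 \leq 2 D_\psi(x, x_*)$, is immediate from the 1-strong convexity of $\psi$ with respect to $\|\cdot\|$: expanding the definition of the Bregman divergence at $y = x_*$ yields $D_\psi(x, x_*) \geq \frac{1}{2}\|x - x_*\|^2$ at once, which is exactly what is needed.

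For the second inequality $2 D_\psi(x, x_*) \leq G^2/\mu^2$, I would combine two applications of Assumption 2' with the Lipschitz bound from Assumption 3'. The first application takes $y = x_*$: since $x_* \in \argmin_{\dom} F$ and $\partial F(x_*)$ is nonempty by $\dom \subseteq \mathrm{int}(\mathrm{dom}(F))$, the first-order optimality condition guarantees some $g_* \in \partial F(x_*)$ with $\langle g_*, x - x_* \rangle \geq 0$ for every $x \in \dom$, so relative strong convexity gives
\[
\mu D_\psi(x, x_*) \leq F(x) - F(x_*).
\]
The second application takes $y = x$ with any $g \in \partial F(x)$ and rearranges to
\[
F(x) - F(x_*) \leq \langle g, x - x_* \rangle - \mu D_\psi(x_*, x) \leq G\|x - x_*\| - \mu D_\psi(x_*, x),
\]
by $\|g\|_* \leq G$ and the duality inequality $\langle g, x-x_*\rangle \leq \|g\|_*\|x-x_*\|$.

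Chaining these two estimates and then invoking the first inequality of the fact applied to $D_\psi(x_*, x)$ produces
\[
\mu D_\psi(x, x_*) \leq G\|x - x_*\| - \tfrac{\mu}{2}\|x - x_*\|^2.
\]
The right-hand side is a concave quadratic in the scalar $t = \|x - x_*\| \geq 0$, maximized at $t = G/\mu$ with value $G^2/(2\mu)$; dividing by $\mu$ yields $D_\psi(x, x_*) \leq G^2/(2\mu^2)$, which is exactly the stated bound.

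The only subtlety I anticipate is the justification of the existence of the optimality subgradient $g_*$; beyond that, the argument is just two applications of relative strong convexity, one dual-norm Lipschitz estimate, and a one-variable maximization. I do not expect a main obstacle here, since the fact is a short technical estimate used only to control the implicit diameter seen by the algorithm under strong convexity.
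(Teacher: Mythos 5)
Your proposal is correct and follows essentially the same argument as the paper: two applications of relative strong convexity (once at $y=x_*$ using first-order optimality, once at $y=x$), the dual-norm Lipschitz bound, the strong convexity of $\psi$, and then optimizing the concave quadratic in $\|x-x_*\|$, which is just Young's inequality $G t \le \frac{G^2}{2\mu}+\frac{\mu t^2}{2}$ as the paper phrases it. The only cosmetic difference is that you chain the two strong-convexity inequalities before the quadratic step, whereas the paper first derives the scalar bound $F(x)-F(x_*)\le G^2/(2\mu)$ and then compares it to $\mu D_\psi(x,x_*)$; the content is the same.
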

\begin{proof}
Given $x\in\dom$, by assumption 2', for any fixed $g\in\pa F(x)$,
we have
\begin{align*}
F(x_{*})\geq & F(x)+\langle g,x_{*}-x\rangle+\mu D_{\psi}(x_{*},x)\\
\overset{(a)}{\geq} & F(x)-\left\Vert g\right\Vert _{*}\left\Vert x_{*}-x\right\Vert +\mu D_{\psi}(x_{*},x)\\
\overset{(b)}{\geq} & F(x)-\left\Vert g\right\Vert _{*}\left\Vert x_{*}-x\right\Vert +\frac{\mu}{2}\left\Vert x_{*}-x\right\Vert ^{2}\\
\overset{(c)}{\geq} & F(x)-\frac{\left\Vert g\right\Vert _{*}^{2}}{2\mu}\overset{(d)}{\geq}F(x)-\frac{G^{2}}{2\mu}\\
\Rightarrow\frac{G^{2}}{2\mu}\geq & F(x)-F(x_{*})
\end{align*}
where $(a)$ is due to Cauchy-Schwarz inequality; $(b)$ is by $D_{\psi}(x,y)\geq\frac{1}{2}\left\Vert x-y\right\Vert ^{2}$;
$(c)$ is because of Young's inequality; $(d)$ is by Assumption 3'. 

Now by Assumption 2' again, for any fixed $g\in\pa F(x_{*})$, we
have
\begin{align*}
F(x)\geq & F(x_{*})+\langle g,x-x_{*}\rangle+\mu D_{\psi}(x,x_{*})\\
\overset{(e)}{\geq} & F(x_{*})+\mu D_{\psi}(x,x_{*})\overset{(f)}{\geq}F(x_{*})+\frac{\mu}{2}\left\Vert x-x_{*}\right\Vert ^{2}\\
\Rightarrow F(x)-F(x_{*})\geq & \mu D_{\psi}(x,x_{*})
\end{align*}
where $(e)$ is by $\langle g,x-x_{*}\rangle\geq0$ due to $g\in\pa F(x_{*})$
and $x_{*}\in\argmin_{x\in\dom}F(x)$; $(f)$ is by $D_{\psi}(x,y)\geq\frac{1}{2}\left\Vert x-y\right\Vert ^{2}$.

Finally, we know
\[
\frac{\mu}{2}\left\Vert x-x_{*}\right\Vert ^{2}\leq\mu D_{\psi}(x,x_{*})\leq F(x)-F(x_{*})\leq\frac{G^{2}}{2\mu}\Rightarrow\left\Vert x-x_{*}\right\Vert ^{2}\leq2D_{\psi}(x,x_{*})\leq\frac{G^{2}}{\mu^{2}}.
\]
\end{proof}

\subsection{Algorithm with the General Norm}

\begin{algorithm}[h]
\caption{\label{alg:algo-md}Projected Stochastic MD with Clipping}

\textbf{Input}: $x_{1}\in\dom$, $M_{t}>0$, $\eta_{t}>0$.

\textbf{for} $t=1$ \textbf{to} $T$ \textbf{do}

$\quad$$g_{t}=\left(1\land\frac{M_{t}}{\left\Vert \hp F(x_{t})\right\Vert _{*}}\right)\hp F(x_{t})$

$\quad$$x_{t+1}=\argmin_{x\in\dom}\langle g_{t},x-x_{t}\rangle+\frac{1}{\eta_{t}}D_{\psi}(x,x_{t}).$

\textbf{end for}
\end{algorithm}

With the new assumptions, we provide a general version of Algorithm
\ref{alg:algo} as shown in Algorithm \ref{alg:algo-md}, which employs
the mirror descent framework. Note that when $\left\Vert \cdot\right\Vert =\left\Vert \cdot\right\Vert _{2}$
and $\psi(x)=\frac{1}{2}\left\Vert x\right\Vert _{2}^{2}$, Algorithm
\ref{alg:algo-md} is totally the same as Algorithm \ref{alg:algo}.

\subsection{Generalized Fundamental Lemmas}

In this section, we present the generalized fundamental lemmas used
in the proof. First, recall the notations used in the main text:
\begin{align*}
\Delta_{t}\coloneqq & \Delta_{t}(x_{*});\quad\pa_{t}\coloneqq\E_{t}\left[\hp F(x_{t})\right]\in\pa F(x_{t});\\
\xi_{t}\coloneqq & g_{t}-\pa_{t};\quad\xi_{t}^{u}\coloneqq g_{t}-\E_{t}\left[g_{t}\right];\quad\xi_{t}^{b}=\E_{t}\left[g_{t}\right]-\pa_{t};\\
d_{t}\coloneqq & \left\Vert x_{t}-x_{*}\right\Vert ;\quad D_{t}\coloneqq\max_{s\in\left[t\right]}d_{s};\quad\mathfrak{D}_{t}\coloneqq D_{t}\lor\alpha;
\end{align*}
where $\F_{t}=\sigma(\hp F(x_{1}),\cdots,\hp F(x_{t}))$ is the natural
filtration. $\E_{t}\left[\cdot\right]$ is used to denote $\E\left[\cdot\mid\F_{t-1}\right]$
for brevity. Now we are able to present the general version of Lemmas
\ref{lem:err-bound}, \ref{lem:basic} and \ref{lem:normalize}, which
play the most important roles in the proof. 

The proof of Lemma \ref{lem:err-bound-general} is by extending the
ideas in \cite{liu2023breaking} to general norms. The modification
appears when bounding the term $\E_{t}\left[\|\xi_{t}^{u}\|_{*}^{2}\right]$.
But the final bound is still in the order of $O(\sigma^{p}M_{t}^{2-p})$.
\begin{lem}
\label{lem:err-bound-general}For any $t\in\left[T\right]$, if $M_{t}\geq2G$,
we have
\begin{align*}
\left\Vert \xi_{t}^{u}\right\Vert _{*} & \le2M_{t};\quad\E_{t}\left[\left\Vert \xi_{t}^{u}\right\Vert _{*}^{2}\right]\le\begin{cases}
10\sigma^{p}M_{t}^{2-p} & \text{if }\left\Vert \cdot\right\Vert =\left\Vert \cdot\right\Vert _{2}\\
40\sigma^{p}M_{t}^{2-p} & \text{o.w.}
\end{cases};\\
\left\Vert \xi_{t}^{b}\right\Vert _{*} & \le2\sigma^{p}M_{t}^{1-p};\quad\left\Vert \xi_{t}^{b}\right\Vert _{*}^{2}\leq10\sigma^{p}M_{t}^{2-p}.
\end{align*}
\end{lem}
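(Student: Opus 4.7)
The plan is to prove the four estimates from the clipping identity $g_t = (1\land M_t/\|a\|_*)a$ with $a := \hp F(x_t)$ and $c := a - \pa_t$, combined with the $p$-th moment hypothesis $\E_t[\|c\|_*^p]\le \sigma^p$. The key elementary observation is that $\|g_t - a\|_* = \max\{\|a\|_* - M_t,\,0\}$ from a direct case analysis of the clipping operation, and when $M_t\ge 2G$, on the event $\{\|a\|_* > M_t\}$ the triangle inequality applied to $a = \pa_t + c$ yields $\|c\|_*\ge \|a\|_* - G\ge M_t/2$, so both $\indi\{\|a\|_*>M_t\}\le \indi\{\|c\|_*\ge M_t/2\}$ and $\|a\|_* - M_t\le \|c\|_*-G\le \|c\|_*$ hold on this event.

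The bound $\|\xi_t^u\|_*\le 2M_t$ is immediate from $\|g_t\|_*\le M_t$ and Jensen's inequality applied to $\E_t[g_t]$. For $\|\xi_t^b\|_*$, I would use $\xi_t^b = \E_t[g_t - a]$ (since $\E_t[a] = \pa_t$) together with the observation above to get $\|g_t-a\|_*\le \|c\|_*\indi\{\|c\|_*\ge M_t/2\}\le \|c\|_*^p(M_t/2)^{1-p}$, where the last step follows because $1-p<0$ and $\|c\|_*\ge M_t/2$ on the support of the indicator. Taking $\E_t$ yields $\|\xi_t^b\|_*\le 2^{p-1}\sigma^p M_t^{1-p}\le 2\sigma^p M_t^{1-p}$. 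The squared-bias bound then follows by pairing this with the crude estimate $\|\xi_t^b\|_*\le \|\E_t[g_t]\|_* + \|\pa_t\|_*\le M_t + G\le 3M_t/2$, giving $\|\xi_t^b\|_*^2\le (3M_t/2)(2\sigma^p M_t^{1-p}) = 3\sigma^p M_t^{2-p}$, which sits inside $10\sigma^p M_t^{2-p}$.

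The main obstacle is the conditional second-moment bound on $\|\xi_t^u\|_*$, because the hypothesis only controls the $p$-th moment of $c$; the trick is moment interpolation via $\|g_t - \pa_t\|_*^2 = \|g_t - \pa_t\|_*^p \cdot \|g_t - \pa_t\|_*^{2-p}$. I would use $\|g_t - \pa_t\|_*\le \|g_t - a\|_* + \|c\|_*\le 2\|c\|_*$ to bound the $p$-th power by $2^p\|c\|_*^p$, and the uniform estimate $\|g_t - \pa_t\|_*\le \|g_t\|_* + \|\pa_t\|_*\le 3M_t/2$ to bound the remaining $(2-p)$-th power. Taking $\E_t$ then delivers $\E_t[\|g_t - \pa_t\|_*^2]\le 2^p(3/2)^{2-p}\sigma^p M_t^{2-p}\le 4\sigma^p M_t^{2-p}$. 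In the $\ell_2$ case, the variance identity $\E_t[\|\xi_t^u\|_2^2]\le \E_t[\|g_t - \pa_t\|_2^2]$ (the mean minimizes the second moment) closes the bound at $4\le 10$. For a general norm the variance identity is unavailable, so I would instead decompose $\|\xi_t^u\|_*\le \|g_t - \pa_t\|_* + \|\xi_t^b\|_*$ and square using $(a+b)^2\le 2a^2 + 2b^2$, combining with the squared-bias estimate to obtain $\E_t[\|\xi_t^u\|_*^2]\le 8\sigma^p M_t^{2-p} + 6\sigma^p M_t^{2-p} = 14\sigma^p M_t^{2-p}\le 40\sigma^p M_t^{2-p}$. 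This factor-of-$2$ loss from squaring a triangle-inequality sum is precisely what inflates the constant from $10$ to $40$ in the general-norm case.
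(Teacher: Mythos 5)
Your proof is correct and in fact yields tighter constants than the paper, but it follows a genuinely different route at the crux. To bound $\E_t[\|g_t-\pa_t\|_*^2]$, the paper splits on the event $\{\|\hp F(x_t)\|_*\geq M_t\}$: on the clipped event it bounds $\|g_t-\pa_t\|_*^2\leq(3M_t/2)^2$ uniformly and controls the indicator by Markov's inequality applied to $\|c\|_*^p$, while on the unclipped event it uses $g_t=\hp F(x_t)$ and interpolates only there. You instead establish the pointwise inequality $\|g_t-\pa_t\|_*\leq 2\|c\|_*$ valid on both events (using $\|g_t-\hp F(x_t)\|_*=\max\{\|\hp F(x_t)\|_*-M_t,0\}\leq\|c\|_*$ when $M_t\geq2G$), which lets you interpolate $\|g_t-\pa_t\|_*^2=\|g_t-\pa_t\|_*^p\|g_t-\pa_t\|_*^{2-p}\leq 2^p\|c\|_*^p(3M_t/2)^{2-p}$ once, uniformly, with no event decomposition or Markov step, landing on $4\sigma^pM_t^{2-p}$ versus the paper's $10\sigma^pM_t^{2-p}$. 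The squared-bias step also differs: the paper appeals to Jensen to reduce $\|\xi_t^b\|_*^2$ to $\E_t[\|g_t-\pa_t\|_*^2]$, whereas you pair the first-moment bias bound with the crude estimate $\|\xi_t^b\|_*\leq 3M_t/2$, getting the sharper $3\sigma^pM_t^{2-p}$. For the general-norm $\E_t[\|\xi_t^u\|_*^2]$ both arguments decompose through $\|g_t-\pa_t\|_*$ and $\|\xi_t^b\|_*$, but your plug-in constants give $14\sigma^pM_t^{2-p}$ rather than $40\sigma^pM_t^{2-p}$. The paper's route is slightly more mechanical (it is the standard clipping argument seen in prior work), while yours buys a cleaner single interpolation and smaller constants at the cost of one extra pointwise observation.
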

\begin{proof}
First, $\|\xi_{t}^{u}\|_{*}\le2M_{t}$ is always true due to 
\[
\left\Vert \xi_{t}^{u}\right\Vert _{*}=\left\Vert g_{t}-\E_{t}\left[g_{t}\right]\right\Vert _{*}\leq\left\Vert g_{t}\right\Vert _{*}+\left\Vert \E_{t}\left[g_{t}\right]\right\Vert _{*}\leq2M_{t}.
\]

Next, let us prove the bound on $\E_{t}\left[\|\xi_{t}^{u}\|_{*}^{2}\right]$.
Note that if $\left\Vert \cdot\right\Vert $ is the general norm,
we have
\begin{align*}
\E_{t}\left[\left\Vert \xi_{t}^{u}\right\Vert _{*}^{2}\right]= & \E_{t}\left[\left\Vert g_{t}-\E_{t}\left[g_{t}\right]\right\Vert _{*}^{2}\right]\leq\E_{t}\left[2\left\Vert g_{t}-\pa_{t}\right\Vert _{*}^{2}+2\left\Vert \pa_{t}-\E_{t}\left[g_{t}\right]\right\Vert _{*}^{2}\right]\\
\leq & \E_{t}\left[2\left\Vert g_{t}-\pa_{t}\right\Vert _{*}^{2}+2\E_{t}\left[\left\Vert \pa_{t}-g_{t}\right\Vert _{*}^{2}\right]\right]=4\E_{t}\left[\left\Vert g_{t}-\pa_{t}\right\Vert _{*}^{2}\right].
\end{align*}
If $\left\Vert \cdot\right\Vert =\left\Vert \cdot\right\Vert _{2}$,
then $\left\Vert \cdot\right\Vert _{*}=\left\Vert \cdot\right\Vert _{2}$.
In this case, we know
\begin{align*}
\E_{t}\left[\left\Vert \xi_{t}^{u}\right\Vert _{2}^{2}\right]= & \E_{t}\left[\left\Vert g_{t}-\E_{t}\left[g_{t}\right]\right\Vert _{2}^{2}\right]\\
= & \E_{t}\left[\left\Vert g_{t}-\pa_{t}\right\Vert _{2}^{2}+2\langle g_{t}-\pa_{t},\pa_{t}-\E_{t}\left[g_{t}\right]\rangle+\left\Vert \pa_{t}-\E_{t}\left[g_{t}\right]\right\Vert _{2}^{2}\right]\\
= & \E_{t}\left[\left\Vert g_{t}-\pa_{t}\right\Vert _{2}^{2}\right]-\left\Vert \pa_{t}-\E_{t}\left[g_{t}\right]\right\Vert _{2}^{2}\leq\E_{t}\left[\left\Vert g_{t}-\pa_{t}\right\Vert _{2}^{2}\right].
\end{align*}
Thus, there is
\[
\E_{t}\left[\left\Vert \xi_{t}^{u}\right\Vert _{*}^{2}\right]\le\begin{cases}
\E_{t}\left[\left\Vert g_{t}-\pa_{t}\right\Vert _{*}^{2}\right] & \text{if }\left\Vert \cdot\right\Vert =\left\Vert \cdot\right\Vert _{2}\\
4\E_{t}\left[\left\Vert g_{t}-\pa_{t}\right\Vert _{*}^{2}\right] & \text{o.w.}
\end{cases}.
\]
So our next goal is to bound $\E_{t}\left[\left\Vert g_{t}-\pa_{t}\right\Vert _{*}^{2}\right]$
by noticing that
\begin{align}
\E_{t}\left[\left\Vert g_{t}-\pa_{t}\right\Vert _{*}^{2}\right]= & \E_{t}\left[\left\Vert g_{t}-\pa_{t}\right\Vert _{*}^{2}\indi_{\left\Vert \hp F(x_{t})\right\Vert _{*}\geq M_{t}}+\left\Vert g_{t}-\pa_{t}\right\Vert _{*}^{2}\indi_{\left\Vert \hp F(x_{t})\right\Vert _{*}<M_{t}}\right]\nonumber \\
= & \E_{t}\left[\left\Vert g_{t}-\pa_{t}\right\Vert _{*}^{2}\indi_{\left\Vert \hp F(x_{t})\right\Vert _{*}\geq M_{t}}+\left\Vert g_{t}-\pa_{t}\right\Vert _{*}^{2-p}\left\Vert \hp F(x_{t})-\pa_{t}\right\Vert _{*}^{p}\indi_{\left\Vert \hp F(x_{t})\right\Vert _{*}<M_{t}}\right]\nonumber \\
\overset{(a)}{\leq} & \E_{t}\left[\frac{9}{4}M_{t}^{2}\indi_{\left\Vert \hp F(x_{t})-\pa_{t}\right\Vert _{*}\geq M_{t}/2}+\left(\frac{9}{4}M_{t}\right)^{2-p}\left\Vert \hp F(x_{t})-\pa_{t}\right\Vert _{*}^{p}\indi_{\left\Vert \hp F(x_{t})\right\Vert _{*}<M_{t}}\right]\nonumber \\
\overset{(b)}{\leq} & \frac{9}{4}M_{t}^{2}\cdot\frac{\sigma^{p}}{\left(M_{t}/2\right)^{p}}+\left(\frac{9}{4}\right)^{2-p}M_{t}^{2-p}\cdot\sigma^{p}\leq10\sigma^{p}M_{t}^{2-p}\label{eq:mid}
\end{align}
where $(a)$ is due to $\left\Vert g_{t}-\pa_{t}\right\Vert _{*}\leq\left\Vert g_{t}\right\Vert _{*}+\left\Vert \pa_{t}\right\Vert _{*}\leq M_{t}+M_{t}/2=3M_{t}/2$;
$(b)$ is by using Markov's inequality to get
\[
\E_{t}\left[\mathds{1}_{\left\Vert \hp F(x_{t})-\pa_{t}\right\Vert _{*}\geq M_{t}/2}\right]=\Pr\left[\left\Vert \hp F(x_{t})-\pa_{t}\right\Vert _{*}^{p}\geq\left(M_{t}/2\right)^{p}\mid\F_{t-1}\right]\leq\frac{\sigma^{p}}{\left(M_{t}/2\right)^{p}}
\]
and by Assumption 5' to obtain
\[
\E_{t}\left[\left\Vert \hp F(x_{t})-\pa_{t}\right\Vert _{*}^{p}\indi_{\left\Vert \hp F(x_{t})\right\Vert _{*}<M_{t}}\right]\leq\E_{t}\left[\left\Vert \hp F(x_{t})-\pa_{t}\right\Vert _{*}^{p}\right]\leq\sigma^{p}.
\]
Hence, we know
\[
\E_{t}\left[\left\Vert \xi_{t}^{u}\right\Vert _{*}^{2}\right]\le\begin{cases}
10\sigma^{p}M_{t}^{2-p} & \text{if }\left\Vert \cdot\right\Vert =\left\Vert \cdot\right\Vert _{2}\\
40\sigma^{p}M_{t}^{2-p} & \text{o.w.}
\end{cases}.
\]

Then, we prove $\left\Vert \xi_{t}^{b}\right\Vert _{*}\le2\sigma^{p}M_{t}^{1-p}$
by
\begin{align*}
\left\Vert \xi_{t}^{b}\right\Vert _{*}= & \left\Vert \E_{t}\left[g_{t}\right]-\pa_{t}\right\Vert _{*}=\left\Vert \E_{t}\left[g_{t}-\hp F(x_{t})\right]\right\Vert _{*}\\
\leq & \E_{t}\left[\left\Vert g_{t}-\hp F(x_{t})\right\Vert _{*}\right]=\E_{t}\left[\left\Vert \frac{M_{t}}{\left\Vert \hp F(x_{t})\right\Vert _{*}}\hp F(x_{t})-\hp F(x_{t})\right\Vert _{*}\indi_{\left\Vert \hp F(x_{t})\right\Vert _{*}\geq M_{t}}\right]\\
= & \E_{t}\left[\left(\left\Vert \hp F(x_{t})\right\Vert _{*}-M_{t}\right)\indi_{\left\Vert \hp F(x_{t})\right\Vert _{*}\geq M_{t}}\right]\overset{(c)}{\leq}\E_{t}\left[\left\Vert \hp F(x_{t})-\pa_{t}\right\Vert _{*}\indi_{\left\Vert \hp F(x_{t})\right\Vert _{*}\geq M_{t}}\right]\\
\overset{(d)}{\leq} & \E_{t}\left[\left\Vert \hp F(x_{t})-\pa_{t}\right\Vert _{*}\indi_{\left\Vert \hp F(x_{t})-\pa_{t}\right\Vert _{*}\geq M_{t}/2}\right]\overset{(e)}{\leq}\E_{t}\left[\left\Vert \hp F(x_{t})-\pa_{t}\right\Vert _{*}^{p}\cdot\left(\frac{2}{M_{t}}\right)^{p-1}\right]\\
\overset{(f)}{\leq} & 2^{p-1}\sigma^{p}M_{t}^{1-p}\leq2\sigma^{p}M_{t}^{1-p}
\end{align*}
where $(c)$ is due to $\left\Vert \hp F(x_{t})\right\Vert _{*}-M_{t}\leq\left\Vert \hp F(x_{t})-\pa_{t}\right\Vert _{*}+\left\Vert \pa_{t}\right\Vert _{*}-M_{t}\leq\left\Vert \hp F(x_{t})-\pa_{t}\right\Vert _{*}$
when $M_{t}\geq2G$ and $\left\Vert \pa_{t}\right\Vert _{*}\leq G$;
$(d)$ is by 
\begin{align*}
M_{t}\leq & \left\Vert \hp F(x_{t})\right\Vert _{*}\leq\left\Vert \hp F(x_{t})-\pa_{t}\right\Vert _{*}+\left\Vert \pa_{t}\right\Vert _{*}\leq\left\Vert \hp F(x_{t})-\pa_{t}\right\Vert _{*}+G\\
\leq & \left\Vert \hp F(x_{t})-\pa_{t}\right\Vert _{*}+M_{t}/2\\
\Rightarrow M_{t}/2\leq & \left\Vert \hp F(x_{t})-\pa_{t}\right\Vert _{*}
\end{align*}
which implies 
\[
\mathds{1}_{\left\Vert \hp F(x_{t})\right\Vert _{*}\geq M_{t}}\leq\mathds{1}_{\left\Vert \hp F(x_{t})-\pa_{t}\right\Vert _{*}\geq M_{t}/2};
\]
$(e)$ is by 
\[
\mathds{1}_{\left\Vert \hp F(x_{t})-\pa_{t}\right\Vert _{*}\geq M_{t}/2}\leq\left(\frac{\left\Vert \hp F(x_{t})-\pa_{t}\right\Vert _{*}}{M_{t}/2}\right)^{p-1}\indi_{\left\Vert \hp F(x_{t})-\pa_{t}\right\Vert _{*}\geq M_{t}/2}\leq\left(\frac{\left\Vert \hp F(x_{t})-\pa_{t}\right\Vert _{*}}{M_{t}/2}\right)^{p-1};
\]
$(f)$ is due to the new Assumption 5', i.e., $\E_{t}\left[\left\Vert \hp F(x_{t})-\pa_{t}\right\Vert _{*}^{p}\right]\leq\sigma^{p}$.

Finally, we show $\left\Vert \xi_{t}^{b}\right\Vert _{*}^{2}\leq10\sigma^{p}M_{t}^{2-p}$.
Note that
\[
\left\Vert \xi_{t}^{b}\right\Vert _{*}^{2}=\left\Vert \E_{t}\left[g_{t}\right]-\pa_{t}\right\Vert _{*}^{2}\leq\E_{t}\left[\left\Vert g_{t}-\pa_{t}\right\Vert _{*}^{2}\right]\leq10\sigma^{p}M_{t}^{2-p}
\]
where the last step is by (\ref{eq:mid}).
\end{proof}

Next, we inroduce Lemma \ref{lem:basic-general}, which will degenerate
to Lemma \ref{lem:basic} when $\|\cdot\|=\|\cdot\|_{2}$ and $\psi(x)=\frac{1}{2}\|x\|_{2}^{2}$. 
\begin{lem}
\label{lem:basic-general}For any $t\in\left[T\right]$, we have
\[
\Delta_{t}+\eta_{t}^{-1}D_{\psi}\left(x_{*},x_{t+1}\right)-\left(\eta_{t}^{-1}-\mu\right)D_{\psi}\left(x_{*},x_{t}\right)\leq\langle\xi_{t},x_{*}-x_{t}\rangle+\eta_{t}\left(2\left\Vert \xi_{t}^{u}\right\Vert _{*}^{2}+2\left\Vert \xi_{t}^{b}\right\Vert _{*}^{2}+G^{2}\right).
\]
\end{lem}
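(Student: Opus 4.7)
The plan is to carry out the standard one-step mirror descent analysis, with the Bregman divergence $D_\psi$ playing the role previously played by $\tfrac{1}{2}\|\cdot\|_2^2$, and with the clipped-minus-true gradient decomposed as $\xi_t=\xi_t^b+\xi_t^u$. I will start from the first-order optimality of the MD update for $x_{t+1}$, invoke relative strong convexity to bring in $\Delta_t$, and finish by bounding $\|g_t\|_*^2$ using elementary quadratic inequalities.

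First, since $x_{t+1}=\argmin_{x\in\dom}\{\langle g_t,x\rangle+\tfrac{1}{\eta_t}D_\psi(x,x_t)\}$, its optimality condition together with the three-point identity for Bregman divergences yields
\[
\langle g_t,\,x_{t+1}-x_*\rangle\;\leq\;\tfrac{1}{\eta_t}\bigl[D_\psi(x_*,x_t)-D_\psi(x_*,x_{t+1})-D_\psi(x_{t+1},x_t)\bigr].
\]
Adding $\langle g_t,x_t-x_{t+1}\rangle$ to both sides converts the left side into $\langle g_t,x_t-x_*\rangle$. I then apply relative $\mu$-strong convexity (Assumption 2') at $\pa_t\in\pa F(x_t)$ to obtain $\Delta_t+\mu D_\psi(x_*,x_t)\leq\langle\pa_t,x_t-x_*\rangle=\langle g_t,x_t-x_*\rangle-\langle\xi_t,x_t-x_*\rangle$, which transfers the inner product onto $g_t$ at the cost of the noise term $\langle\xi_t,x_*-x_t\rangle$. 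Chaining the two bounds and rearranging gives
\[
\Delta_t+\tfrac{1}{\eta_t}D_\psi(x_*,x_{t+1})-\bigl(\tfrac{1}{\eta_t}-\mu\bigr)D_\psi(x_*,x_t)\;\leq\;\langle\xi_t,x_*-x_t\rangle+\langle g_t,x_t-x_{t+1}\rangle-\tfrac{1}{\eta_t}D_\psi(x_{t+1},x_t).
\]

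Second, to bound the last two terms, I apply Fenchel--Young to get $\langle g_t,x_t-x_{t+1}\rangle\leq\tfrac{\eta_t}{2}\|g_t\|_*^2+\tfrac{1}{2\eta_t}\|x_t-x_{t+1}\|^2$, and then use the $1$-strong convexity of $\psi$, namely $D_\psi(x_{t+1},x_t)\geq\tfrac{1}{2}\|x_{t+1}-x_t\|^2$; the two $\|x_t-x_{t+1}\|^2$ contributions cancel exactly, leaving a clean residual of $\tfrac{\eta_t}{2}\|g_t\|_*^2$.

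Third, writing $g_t=\pa_t+\xi_t^b+\xi_t^u$ with $\|\pa_t\|_*\leq G$ (Assumption 3') and applying $(a+b)^2\leq 2a^2+2b^2$ twice gives $\|g_t\|_*^2\leq 2G^2+4\|\xi_t^b\|_*^2+4\|\xi_t^u\|_*^2$, so $\tfrac{\eta_t}{2}\|g_t\|_*^2\leq\eta_t\bigl(G^2+2\|\xi_t^b\|_*^2+2\|\xi_t^u\|_*^2\bigr)$, which matches the statement exactly. No step is genuinely hard --- this is a routine mirror-descent calculation --- and the only bookkeeping point is to arrange the quadratic splits so the coefficients on $G^2$, $\|\xi_t^b\|_*^2$, and $\|\xi_t^u\|_*^2$ land at $1,2,2$, which the two successive applications of $(a+b)^2\leq 2a^2+2b^2$ deliver automatically.
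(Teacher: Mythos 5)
Your proposal is correct and follows essentially the same route as the paper's proof: first-order optimality of the MD update combined with the three-point identity, relative strong convexity to introduce $\Delta_t$ and the $\langle\xi_t,x_*-x_t\rangle$ term, the exact cancellation of $\tfrac{1}{2\eta_t}\|x_t-x_{t+1}\|^2$ against $\eta_t^{-1}D_\psi(x_{t+1},x_t)$ via $1$-strong convexity of $\psi$, and the same two quadratic splits $\|g_t\|_*^2\leq 2G^2+2\|\xi_t\|_*^2\leq 2G^2+4\|\xi_t^b\|_*^2+4\|\xi_t^u\|_*^2$. The only difference is expository ordering (you bring in the MD optimality bound before strong convexity, the paper does the reverse); the underlying argument is identical.
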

\begin{proof}
We start with the relative $\mu$-stronogly convexity assumption
\begin{align*}
\Delta_{t}\leq & \langle\pa_{t},x_{t}-x_{*}\rangle-\mu D_{\psi}\left(x_{t},x_{*}\right)\\
= & \langle g_{t},x_{t+1}-x_{*}\rangle+\langle g_{t},x_{t}-x_{t+1}\rangle+\langle\xi_{t},x_{*}-x_{t}\rangle-\mu D_{\psi}\left(x_{t},x_{*}\right)\\
\overset{(a)}{\leq} & \left(\eta_{t}^{-1}-\mu\right)D_{\psi}\left(x_{*},x_{t}\right)-\eta_{t}^{-1}D_{\psi}\left(x_{*},x_{t+1}\right)+\langle\xi_{t},x_{*}-x_{t}\rangle\\
 & +\langle g_{t},x_{t}-x_{t+1}\rangle-\eta_{t}^{-1}D_{\psi}\left(x_{t+1},x_{t}\right)\\
\overset{(b)}{\leq} & \left(\eta_{t}^{-1}-\mu\right)D_{\psi}\left(x_{*},x_{t}\right)-\eta_{t}^{-1}D_{\psi}\left(x_{*},x_{t+1}\right)+\langle\xi_{t},x_{*}-x_{t}\rangle\\
 & +\langle g_{t},x_{t}-x_{t+1}\rangle-\frac{\eta_{t}^{-1}}{2}\left\Vert x_{t}-x_{t+1}\right\Vert ^{2}\\
\overset{(c)}{\leq} & \left(\eta_{t}^{-1}-\mu\right)D_{\psi}\left(x_{*},x_{t}\right)-\eta_{t}^{-1}D_{\psi}\left(x_{*},x_{t+1}\right)+\langle\xi_{t},x_{*}-x_{t}\rangle+\frac{\eta_{t}}{2}\left\Vert g_{t}\right\Vert _{*}^{2}\\
\overset{(d)}{\leq} & \left(\eta_{t}^{-1}-\mu\right)D_{\psi}\left(x_{*},x_{t}\right)-\eta_{t}^{-1}D_{\psi}\left(x_{*},x_{t+1}\right)+\langle\xi_{t},x_{*}-x_{t}\rangle+\eta_{t}\left(2\left\Vert \xi_{t}^{u}\right\Vert _{*}^{2}+2\left\Vert \xi_{t}^{b}\right\Vert _{*}^{2}+G^{2}\right)
\end{align*}
where for $(a)$, we use the optimality condition for $x_{t+1}=\argmin_{x\in\dom}\langle g_{t},x-x_{t}\rangle+\frac{1}{\eta_{t}}D_{\psi}(x,x_{t})$
to get for any $x\in\dom$
\begin{align*}
\langle g_{t}+\eta_{t}^{-1}\left(\na\psi(x_{t+1})-\na\psi(x_{t})\right),x_{t+1}-x\rangle\leq & 0\\
\Rightarrow\langle g_{t},x_{t+1}-x\rangle\leq & \eta_{t}^{-1}\langle\na\psi(x_{t})-\na\psi(x_{t+1}),x_{t+1}-x\rangle\\
= & \eta_{t}^{-1}\left(D_{\psi}\left(x,x_{t}\right)-D_{\psi}\left(x,x_{t+1}\right)-D_{\psi}\left(x_{t+1},x_{t}\right)\right).
\end{align*}
$(b)$ is by $D_{\psi}\left(x_{t+1},x_{t}\right)\geq\frac{\left\Vert x_{t}-x_{t+1}\right\Vert ^{2}}{2}$.
In $(c)$, we apply Cauchy--Schwarz inequality to get 
\[
\langle g_{t},x_{t}-x_{t+1}\rangle\leq\frac{\eta_{t}^{-1}}{2}\left\Vert x_{t}-x_{t+1}\right\Vert ^{2}+\frac{\eta_{t}}{2}\left\Vert g_{t}\right\Vert _{*}^{2}.
\]
$(d)$ is by $\left\Vert g_{t}\right\Vert _{*}^{2}\leq2\left\Vert \xi_{t}\right\Vert _{*}^{2}+2\left\Vert \pa_{t}\right\Vert _{*}^{2}\leq2\left\Vert \xi_{t}\right\Vert _{*}^{2}+2G^{2}$
and $\left\Vert \xi_{t}\right\Vert _{*}^{2}\leq2\left\Vert \xi_{t}^{u}\right\Vert _{*}^{2}+2\left\Vert \xi_{t}^{b}\right\Vert _{*}^{2}$.
After rearranging the terms, we finish the proof.
\end{proof}

With the above two lemmas, one can follow almost the same line to
prove the general version of the convergence theorems (except Theorem
\ref{thm:lip-dog-prob}) both in expectation and probability. We leave
this simple extension to the interested reader and finish this section.

\section{A Technical Tool\label{sec:app-tech}}

In this section, we provide a technical tool, Freedman's inequality,
in Lemma \ref{lem:freedman}, the omitted proof of which can be found
in \cite{bennett1962probability,freedman1975tail,dzhaparidze2001bernstein}.
This famous inequality can help us to quantify the concentration phenomenon
of a bounded martingale difference sequence.
\begin{lem}
\label{lem:freedman}(Freedman's inequality) Suppose $X_{t\in\mathbb{N}^{+}}\in\R$
is a martingale difference sequence adapted to the filtration $\F_{t\in\mathbb{N}}$
satisfying $\left|X_{t}\right|\leq R$ almost surely for some constant
$R$. Let $\sigma_{t}^{2}=\E\left[\left|X_{t}\right|{}^{2}\mid\F_{t-1}\right]$,
then for any $a>0$ and $F>0$, there is
\[
\Pr\left[\exists\tau\geq1,\left|\sum_{t=1}^{\tau}X_{t}\right|>a\text{ and }\sum_{t=1}^{\tau}\sigma_{t}^{2}\leq F\right]\leq2\exp\left(-\frac{a^{2}}{2F+2Ra/3}\right).
\]
\end{lem}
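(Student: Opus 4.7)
The plan is to follow the classical exponential-supermartingale approach. First I would fix a parameter $\theta \in (0, 3/R)$ and construct the process $Z_t = \exp(\theta S_t - \varphi(\theta) V_t)$, where $S_t = \sum_{s \le t} X_s$, $V_t = \sum_{s \le t} \sigma_s^2$, and $\varphi(\theta)$ is chosen so that $\{Z_t\}$ is a nonnegative $\F_t$-supermartingale with $Z_0 = 1$.

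The core technical step is the conditional exponential moment bound $\E[e^{\theta X_t} \mid \F_{t-1}] \le \exp\bigl(\sigma_t^2 (e^{\theta R} - 1 - \theta R)/R^2\bigr)$. This follows from the elementary inequality $e^y \le 1 + y + y^2 \cdot (e^{\theta R} - 1 - \theta R)/(\theta R)^2$ valid for $y \le \theta R$, combined with the martingale-difference property $\E[X_t \mid \F_{t-1}] = 0$ and the definition $\E[X_t^2 \mid \F_{t-1}] = \sigma_t^2$. Taking $\varphi(\theta) = (e^{\theta R} - 1 - \theta R)/R^2$ then yields $\E[Z_t \mid \F_{t-1}] \le Z_{t-1}$.

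Next I would introduce the stopping time $\tau^\star = \inf\{t \ge 1 : S_t \ge a\}$ and, crucially for the ``exists $\tau$'' phrasing, apply optional stopping at $\tau^\star \wedge T$ on the event that $V_{\tau^\star} \le F$. Combined with Markov's inequality, this gives $\Pr[\exists t : S_t \ge a,\ V_t \le F] \le \exp(-\theta a + \varphi(\theta) F)$. I would then invoke the sharp numerical bound $e^x - 1 - x \le x^2/(2(1 - x/3))$ for $x \in [0,3)$ to get $\varphi(\theta) \le \theta^2/(2(1 - \theta R/3))$, and optimize by choosing $\theta = a/(F + Ra/3)$. A short calculation reduces the exponent to $-a^2/(2F + 2Ra/3)$, exactly matching the target. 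The symmetric lower-tail statement is handled by applying the identical argument to $-X_t$, accounting for the factor $2$.

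The main obstacle is less the supermartingale construction itself than the careful handling of the uniform-in-$\tau$ statement: Freedman's inequality is stronger than a pointwise tail bound because it requires controlling the first time the partial sum crosses $a$ on the event that the cumulative conditional variance has not yet exceeded $F$. This forces one to stop at a data-dependent time rather than at a deterministic horizon, and one must verify that the supermartingale property and the bounds $|X_t| \le R$ are preserved under this stopping. The specific constant $2/3$ in the denominator is dictated by the particular form $e^x - 1 - x \le x^2/(2 - 2x/3)$; looser Bennett-type bounds would yield a weaker exponent and should be avoided.
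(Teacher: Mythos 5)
Your proposal is correct, and it reproduces the standard exponential-supermartingale proof of Freedman's inequality. The paper itself does not prove this lemma; it only cites \cite{bennett1962probability,freedman1975tail,dzhaparidze2001bernstein} for the omitted proof, so there is no internal argument to compare against. The technical pieces you lay out — the refined conditional moment bound $\E[e^{\theta X_t}\mid\F_{t-1}]\le\exp\bigl(\sigma_t^2\varphi(\theta)\bigr)$ with $\varphi(\theta)=(e^{\theta R}-1-\theta R)/R^2$, the supermartingale $Z_t$, optional stopping at a bounded time, the elementary bound $e^x-1-x\le x^2/(2-2x/3)$ on $[0,3)$, and the choice $\theta=a/(F+Ra/3)$ — are all standard and all check out, with the union over $\pm X_t$ giving the factor $2$.

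One spot worth tightening: you define $\tau^\star=\inf\{t:S_t\ge a\}$ and then restrict to the event $V_{\tau^\star}\le F$, whereas the event in the lemma is $\exists\tau:\,S_\tau\ge a\text{ and }V_\tau\le F$. These coincide precisely because $V_t=\sum_{s\le t}\sigma_s^2$ is non-decreasing in $t$ (so if the event occurs at some $\tau$, then $\tau^\star\le\tau$ and $V_{\tau^\star}\le V_\tau\le F$), but this monotonicity should be stated explicitly — it is the reason stopping at the first crossing of level $a$ suffices. Equivalently, one can define the stopping time directly as $\tau^\star=\inf\{t:S_t\ge a,\ V_t\le F\}$, which sidesteps the issue. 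Either way, the argument is sound; also worth noting is that the predictability of $V_t$ (being $\F_{t-1}$-measurable) means the optimal $\theta$ would in principle be allowed to depend on $F$, which it does, and this does not interfere with the stopping argument.
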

Next, we provide a simple corollary of Lemma \ref{lem:freedman},
which is easier to use in the analysis. For example, the high-probability
bound of $\sum_{t=1}^{\tau}\eta_{t}^{2}(\|\xi_{t}^{u}\|^{2}-\E_{t}[\|\xi_{t}^{u}\|^{2}])$
in Lemma \ref{lem:xi-u-lip} is done by Corollary \ref{cor:ez-any-freedman-1}.
\begin{cor}
\label{cor:ez-any-freedman-1}Under the same settings in Lemma \ref{lem:freedman}.
If $\sum_{t=1}^{T}\sigma_{t}^{2}\leq F$ with probability $1$ for
some $T\in\N_{+}$, by choosing $a=\frac{R}{3}\log\frac{4}{\delta}+\sqrt{\left(\frac{1}{3}R\log\frac{4}{\delta}\right)^{2}+2F\log\frac{4}{\delta}}$,
we have
\[
\Pr\left[\forall\tau\in\left[T\right],\left|\sum_{t=1}^{\tau}X_{t}\right|\leq\frac{2R}{3}\log\frac{4}{\delta}+\sqrt{2F\log\frac{4}{\delta}}\right]\geq1-\frac{\delta}{2}.
\]
\end{cor}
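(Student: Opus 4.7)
The plan is to apply Lemma \ref{lem:freedman} directly with the specified choice of $a$, exploit the almost-sure bound on $\sum_{t=1}^{T}\sigma_{t}^{2}$ to drop the event $\{\sum_{t=1}^{\tau}\sigma_{t}^{2}\leq F\}$ from the probability, and then simplify the expression for $a$ by an elementary inequality.

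\medskip

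First I would observe that since $\sum_{t=1}^{T}\sigma_{t}^{2}\leq F$ holds with probability one, and $\sigma_{t}^{2}\geq 0$, the partial sums satisfy $\sum_{t=1}^{\tau}\sigma_{t}^{2}\leq F$ almost surely for every $\tau\in[T]$. Therefore the Freedman event
\[
\Bigl\{\exists\,\tau\in[T],\ \bigl|\textstyle\sum_{t=1}^{\tau}X_{t}\bigr|>a\Bigr\}
\]
coincides (up to a null set) with
\[
\Bigl\{\exists\,\tau\in[T],\ \bigl|\textstyle\sum_{t=1}^{\tau}X_{t}\bigr|>a \text{ and } \textstyle\sum_{t=1}^{\tau}\sigma_{t}^{2}\leq F\Bigr\}.
\]
Applying Lemma \ref{lem:freedman} then gives $\Pr[\exists\tau\in[T],\,|\sum_{t=1}^{\tau}X_{t}|>a]\leq 2\exp(-a^{2}/(2F+2Ra/3))$.

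\medskip

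Next, I would calibrate $a$ so that this tail bound is at most $\delta/2$. Setting $2\exp(-a^{2}/(2F+2Ra/3))\leq\delta/2$ is equivalent to requiring
\[
a^{2}-\tfrac{2R}{3}\log\tfrac{4}{\delta}\cdot a-2F\log\tfrac{4}{\delta}\geq 0.
\]
Viewing the left-hand side as a quadratic in $a$ with positive leading coefficient, the inequality holds for every $a$ at least as large as the larger root, which is exactly
\[
a^{\star}=\tfrac{R}{3}\log\tfrac{4}{\delta}+\sqrt{\bigl(\tfrac{R}{3}\log\tfrac{4}{\delta}\bigr)^{2}+2F\log\tfrac{4}{\delta}}.
\]
This is precisely the value of $a$ prescribed in the statement.

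\medskip

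Finally I would clean up the bound using the elementary estimate $\sqrt{x^{2}+y^{2}}\leq x+y$ valid for $x,y\geq 0$, with $x=\tfrac{R}{3}\log\tfrac{4}{\delta}$ and $y=\sqrt{2F\log\tfrac{4}{\delta}}$, to conclude
\[
a^{\star}\leq \tfrac{2R}{3}\log\tfrac{4}{\delta}+\sqrt{2F\log\tfrac{4}{\delta}}.
\]
Taking complements of the event bounded above then yields the desired high-probability statement. There is no real obstacle in this argument; the only subtlety is noticing that the almost-sure bound on the quadratic variation lets us discard the joint event in Lemma \ref{lem:freedman}, after which the remaining work is straightforward algebra on a quadratic in $a$ and one triangle-type inequality.
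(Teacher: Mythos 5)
Your proof is correct, and it follows the natural route implied by the paper, which states this result as a ``simple corollary'' of Freedman's inequality without supplying an explicit argument: you correctly observe that the almost-sure bound on $\sum_{t\le T}\sigma_t^2$ (hence on every partial sum) makes the joint event in Lemma~\ref{lem:freedman} coincide up to a null set with the simple event, identify the prescribed $a$ as the larger root of the quadratic $a^2-\tfrac{2R}{3}\log\tfrac{4}{\delta}\,a-2F\log\tfrac{4}{\delta}\ge 0$, and then simplify via $\sqrt{x^2+y^2}\le x+y$ for $x,y\ge 0$. No gaps; this is the intended argument.
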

For the initial distance adaptive choices, we need the following stronger
version of Corollary \ref{cor:ez-any-freedman-1}.
\begin{cor}
\label{cor:ez-any-freedman-2}Under the same settings in Lemma \ref{lem:freedman}.
If $\sum_{t=1}^{T}\sigma_{t}^{2}\leq F$ with probability $1$ for
any $T\geq1$, by choosing $a=\frac{R}{3}\log\frac{4}{\delta}+\sqrt{\left(\frac{1}{3}R\log\frac{4}{\delta}\right)^{2}+2F\log\frac{4}{\delta}}$,
we have
\[
\Pr\left[\forall\tau\geq1,\left|\sum_{t=1}^{\tau}X_{t}\right|\leq\frac{2R}{3}\log\frac{4}{\delta}+\sqrt{2F\log\frac{4}{\delta}}\right]\geq1-\frac{\delta}{2}.
\]
\end{cor}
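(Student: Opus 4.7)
The plan is to derive this corollary as a direct consequence of Freedman's inequality (Lemma \ref{lem:freedman}), exploiting the almost-sure uniform variance bound. The key observation is that in Lemma \ref{lem:freedman}, the event inside the probability is an intersection of $\{|\sum_{t=1}^{\tau}X_{t}|>a\}$ with the variance condition $\{\sum_{t=1}^{\tau}\sigma_{t}^{2}\leq F\}$. Under our hypothesis, the variance condition already holds almost surely for every $\tau\geq1$, so the intersection collapses to just the tail event on the partial sums.

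More precisely, I would first apply Lemma \ref{lem:freedman} to obtain, for any $a>0$,
\[
\Pr\left[\exists\tau\geq1,\,\left|\sum_{t=1}^{\tau}X_{t}\right|>a\right]\leq2\exp\left(-\frac{a^{2}}{2F+2Ra/3}\right),
\]
where the hypothesis $\Pr[\sum_{t=1}^{\tau}\sigma_{t}^{2}\leq F\text{ for all }\tau\geq1]=1$ is used to drop the variance conditioning from the event in Lemma \ref{lem:freedman}. Next, I would force the right-hand side to be at most $\delta/2$, which, after taking logarithms, is equivalent to the quadratic inequality
\[
a^{2}-\frac{2R\log(4/\delta)}{3}\,a-2F\log\frac{4}{\delta}\geq0.
\]
The positive root of this quadratic is exactly $a^{*}=\frac{R}{3}\log\frac{4}{\delta}+\sqrt{(\tfrac{1}{3}R\log\frac{4}{\delta})^{2}+2F\log\frac{4}{\delta}}$, matching the choice prescribed in the statement. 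Thus choosing $a=a^{*}$ yields the tail bound with probability at least $1-\delta/2$.

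Finally, to obtain the cleaner closed-form expression stated in the corollary, I would apply the elementary inequality $\sqrt{x+y}\leq\sqrt{x}+\sqrt{y}$ to the square-root term, giving
\[
a^{*}\leq\frac{R}{3}\log\frac{4}{\delta}+\frac{R}{3}\log\frac{4}{\delta}+\sqrt{2F\log\frac{4}{\delta}}=\frac{2R}{3}\log\frac{4}{\delta}+\sqrt{2F\log\frac{4}{\delta}},
\]
and substitute this upper bound into the tail event. There is no real obstacle here: the only thing to be slightly careful about is the logical move from the conditional-style statement of Lemma \ref{lem:freedman} (which isolates the event $\{\sum\sigma_{t}^{2}\leq F\}$) to an unconditional bound, which is justified by the almost-sure hypothesis holding simultaneously for all $\tau\geq1$. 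The argument is essentially identical to Corollary \ref{cor:ez-any-freedman-1}, with the sole distinction that the uniform variance bound (and hence the resulting tail bound) is required to hold over the entire infinite time horizon $\tau\geq1$ rather than only up to a fixed $T$.
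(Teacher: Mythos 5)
Your proof is correct and takes the natural (and essentially the only) route: since $\sum_{t=1}^{\tau}\sigma_t^2\leq F$ holds almost surely for every $\tau\geq1$, the variance-clause in Freedman's event is vacuous, so the probability bound applies directly to $\{\exists\tau\geq1,\,|\sum_{t=1}^{\tau}X_t|>a\}$; choosing $a$ to be the positive root of $a^2-\tfrac{2R}{3}\log\tfrac{4}{\delta}\,a-2F\log\tfrac{4}{\delta}=0$ then makes the right-hand side $\leq\delta/2$, and the sub-additivity of the square root gives the cleaner displayed bound. The paper omits the proof of this corollary, and your argument is precisely what is implicit there; your closing remark correctly identifies the only difference from Corollary \ref{cor:ez-any-freedman-1}, namely that the almost-sure variance bound over all $T$ (rather than a single fixed $T$) lets the conclusion hold simultaneously over the infinite horizon.
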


\section{Missing Proofs in Section \ref{sec: analysis}\label{sec:app-missing-proofs}}

In this section, we provide the missing proofs in Section \ref{sec: analysis}.

\subsection{Proof of Lemma \ref{lem:basic-lip-prob}}

\begin{proof}
We first invoke Lemma \ref{lem:basic} for $\mu=0$ to get
\[
\Delta_{t}+\frac{\eta_{t}^{-1}}{2}d_{t+1}^{2}-\frac{\eta_{t}^{-1}}{2}d_{t}^{2}\leq\langle\xi_{t},x_{*}-x_{t}\rangle+\eta_{t}\left(2\left\Vert \xi_{t}^{u}\right\Vert ^{2}+2\left\Vert \xi_{t}^{b}\right\Vert ^{2}+G^{2}\right).
\]
Multiplying both sides by $\eta_{t}/\mathfrak{D}_{t}$, we obtain
\begin{align}
\frac{\eta_{t}\Delta_{t}}{\mathfrak{D}_{t}}+\frac{d_{t+1}^{2}-d_{t}^{2}}{2\mathfrak{D}_{t}}\leq & \eta_{t}\left\langle \xi_{t},\frac{x_{*}-x_{t}}{\mathfrak{D}_{t}}\right\rangle +\frac{\eta_{t}^{2}}{\mathfrak{D}_{t}}\left(2\left\Vert \xi_{t}^{u}\right\Vert ^{2}+2\left\Vert \xi_{t}^{b}\right\Vert ^{2}+G^{2}\right)\nonumber \\
\overset{(a)}{\leq} & \eta_{t}\left\langle \xi_{t},\frac{x_{*}-x_{t}}{\mathfrak{D}_{t}}\right\rangle +\frac{\eta_{t}^{2}}{\alpha}\left(2\left\Vert \xi_{t}^{u}\right\Vert ^{2}+2\left\Vert \xi_{t}^{b}\right\Vert ^{2}+G^{2}\right)\nonumber \\
= & \eta_{t}\left\langle \xi_{t},\frac{x_{*}-x_{t}}{\mathfrak{D}_{t}}\right\rangle +\frac{2\eta_{t}^{2}}{\alpha}\left(\left\Vert \xi_{t}^{u}\right\Vert ^{2}-\E_{t}\left[\left\Vert \xi_{t}^{u}\right\Vert ^{2}\right]\right)\nonumber \\
 & +\frac{\eta_{t}^{2}}{\alpha}\left(2\E_{t}\left[\left\Vert \xi_{t}^{u}\right\Vert ^{2}\right]+2\left\Vert \xi_{t}^{b}\right\Vert ^{2}+G^{2}\right),\label{eq:lip-prob-1}
\end{align}
where $(a)$ is by $\mathfrak{D}_{\tau}\geq\alpha$. Next, summing
up (\ref{eq:lip-prob-1}) from $t=1$ to $\tau$ , there is
\begin{align*}
\frac{d_{\tau+1}^{2}}{2\mathfrak{D}_{\tau}}-\frac{d_{1}^{2}}{2\mathfrak{D}_{1}}+\sum_{t=2}^{\tau}\left(\frac{1}{2\mathfrak{D}_{t-1}}-\frac{1}{2\mathfrak{D}_{t}}\right)d_{t}^{2}+\sum_{t=1}^{\tau}\frac{\eta_{t}\Delta_{t}}{\mathfrak{D}_{t}}\leq & \sum_{t=1}^{\tau}\eta_{t}\left\langle \xi_{t},\frac{x_{*}-x_{t}}{\mathfrak{D}_{t}}\right\rangle +\frac{2\eta_{t}^{2}}{\alpha}\left(\left\Vert \xi_{t}^{u}\right\Vert ^{2}-\E_{t}\left[\left\Vert \xi_{t}^{u}\right\Vert ^{2}\right]\right)\\
 & +\sum_{t=1}^{\tau}\frac{\eta_{t}^{2}}{\alpha}\left(2\E_{t}\left[\left\Vert \xi_{t}^{u}\right\Vert ^{2}\right]+2\left\Vert \xi_{t}^{b}\right\Vert ^{2}+G^{2}\right),
\end{align*}
which yields
\begin{align*}
\frac{d_{\tau+1}^{2}}{2\mathfrak{D}_{\tau}}-\frac{d_{1}^{2}}{2\mathfrak{D}_{1}}+\sum_{t=1}^{\tau}\frac{\eta_{t}\Delta_{t}}{\mathfrak{D}_{t}}\leq & \sum_{t=1}^{\tau}\eta_{t}\left\langle \xi_{t},\frac{x_{*}-x_{t}}{\mathfrak{D}_{t}}\right\rangle +\frac{2\eta_{t}^{2}}{\alpha}\left(\left\Vert \xi_{t}^{u}\right\Vert ^{2}-\E_{t}\left[\left\Vert \xi_{t}^{u}\right\Vert ^{2}\right]\right)\\
 & +\sum_{t=1}^{\tau}\frac{\eta_{t}^{2}}{\alpha}\left(2\E_{t}\left[\left\Vert \xi_{t}^{u}\right\Vert ^{2}\right]+2\left\Vert \xi_{t}^{b}\right\Vert ^{2}+G^{2}\right)
\end{align*}
by noticing that $\mathfrak{D}_{t}\geq\mathfrak{D}_{t-1}$. Finally,
by using $\mathfrak{D}_{1}\geq d_{1}$, $\mathfrak{D}_{\tau}\geq\mathfrak{D}_{t}$
for $t\leq\tau$ and rearranging the terms, we know
\begin{align*}
d_{\tau+1}^{2}+\sum_{t=1}^{\tau}2\eta_{t}\Delta_{t}\leq & \mathfrak{D}_{\tau}\left(\sum_{t=1}^{\tau}2\eta_{t}\left\langle \xi_{t},\frac{x_{*}-x_{t}}{\mathfrak{D}_{t}}\right\rangle +\frac{4\eta_{t}^{2}}{\alpha}\left(\left\Vert \xi_{t}^{u}\right\Vert ^{2}-\E_{t}\left[\left\Vert \xi_{t}^{u}\right\Vert ^{2}\right]\right)\right)\\
 & +\mathfrak{D}_{\tau}\left(d_{1}+\sum_{t=1}^{\tau}\frac{2\eta_{t}^{2}}{\alpha}\left(2\E_{t}\left[\left\Vert \xi_{t}^{u}\right\Vert ^{2}\right]+2\left\Vert \xi_{t}^{b}\right\Vert ^{2}+G^{2}\right)\right).
\end{align*}

Now we bound $\E_{t}[\|\xi_{t}^{u}\|^{2}]\le10\sigma^{p}M_{t}^{2-p}$
and $\|\xi_{t}^{b}\|^{2}\le10\sigma^{p}M_{t}^{2-p}$ by Lemma \ref{lem:err-bound}
to get
\begin{align*}
2\E_{t}\left[\left\Vert \xi_{t}^{u}\right\Vert ^{2}\right]+2\left\Vert \xi_{t}^{b}\right\Vert ^{2}\leq & 40\sigma^{p}M_{t}^{2-p}.
\end{align*}
Hence, we have
\begin{align*}
\frac{2\eta_{t}^{2}}{\alpha}\left(2\E_{t}\left[\left\Vert \xi_{t}^{u}\right\Vert ^{2}\right]+2\left\Vert \xi_{t}^{b}\right\Vert ^{2}+G^{2}\right)\leq & \frac{2\eta_{t}^{2}}{\alpha}\left(40\sigma^{p}M_{t}^{2-p}+G^{2}\right)\\
\leq & \frac{80\sigma^{p}(\eta_{t}M_{t})^{2}}{\alpha M_{t}^{p}}+\frac{2\eta_{t}^{2}G^{2}}{\alpha}\\
\overset{(b)}{\leq} & \frac{80\alpha(\sigma/M)^{p}}{t}+\frac{2\alpha}{t}\\
\Rightarrow\sum_{t=1}^{\tau}\frac{2\eta_{t}^{2}}{\alpha}\left(2\E_{t}\left[\left\Vert \xi_{t}^{u}\right\Vert ^{2}\right]+2\left\Vert \xi_{t}^{b}\right\Vert ^{2}+G^{2}\right)\leq & 2\left(1+40(\sigma/M)^{p}\right)\alpha\log(e\tau)=h(\tau)
\end{align*}
where $(b)$ is by $\eta_{t}M_{t}\leq\alpha$ from Lemma \ref{lem:normalize},
$M_{t}\geq Mt^{\frac{1}{p}}$ and $\eta_{t}\leq\frac{\alpha}{G\sqrt{t}}$
from our choices.
\end{proof}

\subsection{Proof of Lemma \ref{lem:lip-prob-concen}}

\begin{proof}
We first note that $Z_{t}\coloneqq\eta_{t}\left\langle \xi_{t}^{u},\frac{x_{*}-x_{t}}{\mathfrak{D}_{t}}\right\rangle \in\F_{t},\forall t\in\left[T\right]$
is a martingale difference sequence. Next, observe that
\begin{align*}
\left|Z_{t}\right|\leq & \eta_{t}\left\Vert \xi_{t}^{u}\right\Vert \frac{d_{t}}{\mathfrak{D}_{t}}\overset{(a)}{\leq}\eta_{t}\left\Vert \xi_{t}^{u}\right\Vert \overset{(b)}{\leq}2\eta_{t}M_{t}\overset{(c)}{\leq}2\alpha;\\
\E_{t}\left[\left(Z_{t}\right)^{2}\right]= & \E_{t}\left[\eta_{t}^{2}\left\langle \xi_{t}^{u},\frac{x_{*}-x_{t}}{\mathfrak{D}_{t}}\right\rangle ^{2}\right]\leq\eta_{t}^{2}\E_{t}\left[\left\Vert \xi_{t}^{u}\right\Vert ^{2}\right]\overset{(d)}{\leq}10\eta_{t}^{2}\sigma^{p}M_{t}^{2-p}\overset{(e)}{\leq}\frac{10(\sigma/M)^{p}\alpha^{2}}{t};
\end{align*}
where $(a)$ is due to $d_{t}\leq\mathfrak{D}_{t}$; $\|\xi_{t}^{u}\|\leq2M_{t}$
in $(b)$ and $\E_{t}[\|\xi_{t}^{u}\|^{2}]\leq10\sigma^{p}M_{t}^{2-p}$
in $(d)$ are both by Lemma \ref{lem:err-bound}; $\eta_{t}M_{t}\leq\alpha$
in $(c)$ is by Lemma \ref{lem:normalize}; $\eta_{t}^{2}\sigma^{p}M_{t}^{2-p}\leq(\sigma/M)^{p}\alpha^{2}/t$
in $(e)$ is by $\eta_{t}M_{t}\leq\alpha$ from Lemma \ref{lem:normalize}
again and $M_{t}\geq Mt^{\frac{1}{p}}$ from our choice.

Now we know
\[
\sum_{t=1}^{T}\E_{t}\left[\left(Z_{t}\right)^{2}\right]\leq\sum_{t=1}^{T}\frac{10(\sigma/M)^{p}\alpha^{2}}{t}\leq10(\sigma/M)^{p}\alpha^{2}\log(eT)
\]
Let $R=2\alpha$, $F=10(\sigma/M)^{p}\alpha^{2}\log(eT)$. By Freedman's
inequality (Corollary \ref{cor:ez-any-freedman-1}), we know with
probability at least $1-\frac{\delta}{2}$, for any $\tau\in\left[T\right]$
\begin{align*}
\left|\sum_{t=1}^{\tau}Z_{t}\right|\leq & \frac{2R}{3}\log\frac{4}{\delta}+\sqrt{2F\log\frac{4}{\delta}}\\
= & \frac{4\alpha}{3}\log\frac{4}{\delta}+\sqrt{20(\sigma/M)^{p}\alpha^{2}\log(eT)\log\frac{4}{\delta}}\\
\leq & 5\left(\log\frac{4}{\delta}+\sqrt{(\sigma/M)^{p}\log(eT)\log\frac{4}{\delta}}\right)\alpha.
\end{align*}
\end{proof}

\subsection{Proof of Lemma \ref{lem:xi-u-lip}}

\begin{proof}
We first note that $\eta_{t}^{2}\left(\left\Vert \xi_{t}^{u}\right\Vert ^{2}-\E_{t}\left[\left\Vert \xi_{t}^{u}\right\Vert ^{2}\right]\right)\in\F_{t}$
is a martingale difference sequence. Next, observe that
\begin{align*}
\eta_{t}^{2}\left|\left\Vert \xi_{t}^{u}\right\Vert ^{2}-\E_{t}\left[\left\Vert \xi_{t}^{u}\right\Vert ^{2}\right]\right|\leq & \eta_{t}^{2}\left\Vert \xi_{t}^{u}\right\Vert ^{2}+\eta_{t}^{2}\E_{t}\left[\left\Vert \xi_{t}^{u}\right\Vert ^{2}\right]\overset{(a)}{\leq}8\eta_{t}^{2}M_{t}^{2}\overset{(b)}{\leq}8\alpha^{2};\\
\E_{t}\left[\eta_{t}^{4}\left(\left\Vert \xi_{t}^{u}\right\Vert ^{2}-\E_{t}\left[\left\Vert \xi_{t}^{u}\right\Vert ^{2}\right]\right)^{2}\right]\leq & \eta_{t}^{4}\E_{t}\left[\left\Vert \xi_{t}^{u}\right\Vert ^{4}\right]\overset{(c)}{\leq}\eta_{t}^{4}\cdot4M_{t}^{2}\cdot10\sigma^{p}M_{t}^{2-p}\overset{(d)}{\leq}\frac{40(\sigma/M)^{p}\alpha^{4}}{t};
\end{align*}
where both $(a)$ and $(c)$ are due to Lemma \ref{lem:err-bound}.
$(b)$ is due to Lemma \ref{lem:normalize}. $(d)$ is by Lemma \ref{lem:normalize}
again and $M_{t}\geq Mt^{\frac{1}{p}}$. Now we know
\[
\sum_{t=1}^{T}\E_{t}\left[\eta_{t}^{4}\left(\left\Vert \xi_{t}^{u}\right\Vert ^{2}-\E_{t}\left[\left\Vert \xi_{t}^{u}\right\Vert ^{2}\right]\right)^{2}\right]\leq\sum_{t=1}^{T}\frac{40(\sigma/M)^{p}\alpha^{4}}{t}\leq40(\sigma/M)^{p}\alpha^{4}\log(eT).
\]
Let $R=8\alpha^{2}$, $F=40(\sigma/G)^{p}\alpha^{4}\log(eT))$. By
Freedman's inequality (Corollary \ref{cor:ez-any-freedman-1}), we
know with probability at least $1-\frac{\delta}{2}$, for any $\tau\in\left[T\right]$,
\begin{align*}
\sum_{t=1}^{\tau}\eta_{t}^{2}\left(\left\Vert \xi_{t}^{u}\right\Vert ^{2}-\E_{t}\left[\left\Vert \xi_{t}^{u}\right\Vert ^{2}\right]\right)\leq & \frac{2R}{3}\log\frac{4}{\delta}+\sqrt{2F\log\frac{4}{\delta}}\\
= & \frac{16\alpha^{2}}{3}\log\frac{4}{\delta}+\sqrt{80(\sigma/M)^{p}\alpha^{4}\log(eT)\log\frac{4}{\delta}}\\
\leq & 9\left(\log\frac{4}{\delta}+\sqrt{(\sigma/M)^{p}\log(eT)\log\frac{4}{\delta}}\right)\alpha^{2}\\
\Rightarrow\sum_{t=1}^{\tau}\frac{\eta_{t}^{2}}{\alpha}\left(\left\Vert \xi_{t}^{u}\right\Vert ^{2}-\E_{t}\left[\left\Vert \xi_{t}^{u}\right\Vert ^{2}\right]\right)\leq & 9\left(\log\frac{4}{\delta}+\sqrt{(\sigma/M)^{p}\log(eT)\log\frac{4}{\delta}}\right)\alpha.
\end{align*}
\end{proof}

\subsection{Proof of Lemma \ref{lem:basic-lip-exp}}

\begin{proof}
We first invoke Lemma \ref{lem:basic} for $\mu=0$ to get
\[
\Delta_{t}+\frac{\eta_{t}^{-1}}{2}d_{t+1}^{2}-\frac{\eta_{t}^{-1}}{2}d_{t}^{2}\leq\langle\xi_{t},x_{*}-x_{t}\rangle+\eta_{t}\left(2\left\Vert \xi_{t}^{u}\right\Vert ^{2}+2\left\Vert \xi_{t}^{b}\right\Vert ^{2}+G^{2}\right).
\]
Multiplying both sides by $\eta_{t}$, we obtain
\begin{align*}
\eta_{t}\Delta_{t}+\frac{d_{t+1}^{2}-d_{t}^{2}}{2}\leq & \eta_{t}\langle\xi_{t},x_{*}-x_{t}\rangle+\eta_{t}^{2}\left(2\left\Vert \xi_{t}^{u}\right\Vert ^{2}+2\left\Vert \xi_{t}^{b}\right\Vert ^{2}+G^{2}\right)\\
= & \eta_{t}\langle\xi_{t},x_{*}-x_{t}\rangle+2\eta_{t}^{2}\left(\left\Vert \xi_{t}^{u}\right\Vert ^{2}-\E_{t}\left[\left\Vert \xi_{t}^{u}\right\Vert ^{2}\right]\right)\\
 & +\eta_{t}^{2}\left(2\E_{t}\left[\left\Vert \xi_{t}^{u}\right\Vert ^{2}\right]+2\left\Vert \xi_{t}^{b}\right\Vert ^{2}+G^{2}\right).
\end{align*}
Next, summing up from $t=1$ to $\tau$ , there is
\begin{align*}
\frac{d_{\tau+1}^{2}-d_{1}^{2}}{2}+\sum_{t=1}^{\tau}\eta_{t}\Delta_{t}\leq & \sum_{t=1}^{\tau}\eta_{t}\langle\xi_{t},x_{*}-x_{t}\rangle+2\eta_{t}^{2}\left(\left\Vert \xi_{t}^{u}\right\Vert ^{2}-\E_{t}\left[\left\Vert \xi_{t}^{u}\right\Vert ^{2}\right]\right)\\
 & +\sum_{t=1}^{\tau}\eta_{t}^{2}\left(2\E_{t}\left[\left\Vert \xi_{t}^{u}\right\Vert ^{2}\right]+2\left\Vert \xi_{t}^{b}\right\Vert ^{2}+G^{2}\right).
\end{align*}
From the proof of Lemma \ref{lem:basic-lip-prob}, we have
\[
\sum_{t=1}^{\tau}\eta_{t}^{2}\left(2\E_{t}\left[\left\Vert \xi_{t}^{u}\right\Vert ^{2}\right]+2\left\Vert \xi_{t}^{b}\right\Vert ^{2}+G^{2}\right)\leq\frac{\alpha h(\tau)}{2}.
\]

Hence, there is
\[
\frac{d_{\tau+1}^{2}-d_{1}^{2}}{2}+\sum_{t=1}^{\tau}\eta_{t}\Delta_{t}\leq\frac{\alpha h(\tau)}{2}+\sum_{t=1}^{\tau}\eta_{t}\langle\xi_{t},x_{*}-x_{t}\rangle+2\eta_{t}^{2}\left(\left\Vert \xi_{t}^{u}\right\Vert ^{2}-\E_{t}\left[\left\Vert \xi_{t}^{u}\right\Vert ^{2}\right]\right).
\]
Taking expectations on both sides to obtain
\begin{align*}
 & \frac{\E\left[d_{\tau+1}^{2}\right]-\E\left[d_{1}^{2}\right]}{2}+\sum_{t=1}^{\tau}\eta_{t}\E\left[\Delta_{t}\right]\\
\leq & \frac{\alpha h(\tau)}{2}+\sum_{t=1}^{\tau}\eta_{t}\E\left[\langle\xi_{t},x_{*}-x_{t}\rangle\right]+2\eta_{t}^{2}\E\left[\left\Vert \xi_{t}^{u}\right\Vert ^{2}-\E_{t}\left[\left\Vert \xi_{t}^{u}\right\Vert ^{2}\right]\right]\\
\overset{(a)}{=} & \frac{\alpha h(\tau)}{2}+\sum_{t=1}^{\tau}\eta_{t}\E\left[\langle\xi_{t}^{b},x_{*}-x_{t}\rangle\right]\overset{(b)}{\leq}\frac{\alpha h(\tau)}{2}+\sum_{t=1}^{\tau}\eta_{t}\E\left[\left\Vert \xi_{t}^{b}\right\Vert \left\Vert x_{*}-x_{t}\right\Vert \right]\\
\overset{(c)}{\leq} & \frac{\alpha h(\tau)}{2}+\sum_{t=1}^{\tau}2\eta_{t}\sigma^{p}M_{t}^{1-p}\E\left[\left\Vert x_{*}-x_{t}\right\Vert \right]\frac{\alpha h(\tau)}{2}+\sum_{t=1}^{\tau}2\eta_{t}\sigma^{p}M_{t}^{1-p}\sqrt{\E\left[\left\Vert x_{*}-x_{t}\right\Vert ^{2}\right]}\\
= & \frac{\alpha h(\tau)}{2}+\sum_{t=1}^{\tau}2\eta_{t}\sigma^{p}M_{t}^{1-p}\sqrt{\E\left[d_{t}^{2}\right]}
\end{align*}
where we use 
\[
\E\left[\langle\xi_{t},x_{*}-x_{t}\rangle\right]=\E\left[\E_{t}\left[\langle\xi_{t},x_{*}-x_{t}\rangle\right]\right]=\E\left[\langle\E_{t}\left[\xi_{t}\right],x_{*}-x_{t}\rangle\right]=\E\left[\langle\xi_{t}^{b},x_{*}-x_{t}\rangle\right]
\]
and $\E[\|\xi_{t}^{u}\|^{2}]=\E[\E_{t}[\|\xi_{t}^{u}\|^{2}]]$ in
$(a)$; $\langle\xi_{t}^{b},x_{*}-x_{t}\rangle\leq\|\xi_{t}^{b}\|\|x_{*}-x_{t}\|$
in $(b)$ is due to Cauchy--Schwarz inequality; The bound of $\|\xi_{t}^{b}\|\leq2\sigma^{p}M_{t}^{1-p}$
in $(c)$ is from Lemma \ref{lem:err-bound}. $(d)$ is because of
$\E\left[X\right]\leq\sqrt{\E\left[X^{2}\right]}$. Fianlly, we use
\[
2\eta_{t}\sigma^{p}M_{t}^{1-p}=\frac{2\sigma^{p}\eta_{t}M_{t}}{M_{t}^{p}}\leq\frac{2(\sigma/M)^{p}\alpha}{t}
\]
 to finish the proof.
\end{proof}

\section{Additional Theoretical Analysis for Initial Distance Adaptive Choices
When $\mu=0$\label{sec:app-dog}}

First we prove Fact \ref{fact:dog-order}

\begin{proof}
When $w_{t}=1+\log^{2}(t)$, note that
\[
\sum_{t=1}^{T}\frac{1}{tw_{t}}\leq\frac{1}{w_{1}}+\int_{1}^{\infty}\frac{1}{t(1+\log^{2}(t))}\mathrm{d}t=1+\arctan(\log(t))\big\vert_{1}^{\infty}=1+\frac{\pi}{2}=W.
\]

When $w_{t}=\left[y^{(n+1)}(t)\right]^{1+\varepsilon}\prod_{i=1}^{n}y^{(i)}(t)$
where $y(t)=1+\log(t)$, $y^{(n)}(t)=y(y^{(n-1)}(t))$ is the $n$-times
composition with itself for some non-negative integer $n$, and $\varepsilon>0$
can be chosen arbitrarily, note that
\[
\sum_{t=1}^{T}\frac{1}{tw_{t}}\leq\frac{1}{w_{1}}+\int_{1}^{\infty}\frac{1}{tw_{t}}\mathrm{d}t=1+\left(-\frac{1}{\varepsilon}\left[y^{(n+1)}(t)\right]^{-\varepsilon}\right)\big\vert_{1}^{\infty}=1+\frac{1}{\varepsilon}=W.
\]
\end{proof}

Now let's start the proof of Theorem \ref{thm:lip-dog-prob}. We begin
with the following basic inequality.
\begin{lem}
\label{lem:basic-dog}When $\mu=0$, under the choices described in
Theorem \ref{thm:lip-dog-prob}, for any $\tau\geq1$, we have
\begin{align*}
d_{\tau+1}^{2}-d_{1}^{2}+\sum_{t=1}^{\tau}2r_{t}\gamma_{t}\Delta_{t}\leq & 4D_{\tau}r_{\tau}\max_{t\in\left[\tau\right]}\left|\sum_{s=1}^{t}\gamma_{s}\left\langle \xi_{s}^{u},\frac{x_{*}-x_{s}}{D_{s}}\right\rangle \right|+2r_{\tau}^{2}\sum_{t=1}^{\tau}\gamma_{t}^{2}\left(\left\Vert \xi_{t}^{u}\right\Vert ^{2}-\E_{t}\left[\left\Vert \xi_{t}^{u}\right\Vert ^{2}\right]\right)\\
 & +4D_{\tau}r_{\tau}\alpha_{2}\log(4/\delta)W+r_{\tau}^{2}\left(80\alpha_{2}^{2}\log(4/\delta)+2\alpha_{1}^{2}\right)W.
\end{align*}
\end{lem}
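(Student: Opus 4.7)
The plan is to start from the one-step descent inequality of Lemma \ref{lem:basic} with $\mu=0$, multiply both sides by $2\eta_t = 2r_t\gamma_t$, split $\xi_t = \xi_t^u + \xi_t^b$, and telescope $d_{t+1}^2 - d_t^2$ across $t\in[\tau]$. This reduces the lemma to controlling four sums on the right-hand side: the stochastic inner product $\sum 2r_t\gamma_t\langle\xi_t^u,x_*-x_t\rangle$, the bias inner product $\sum 2r_t\gamma_t\langle\xi_t^b,x_*-x_t\rangle$, the variance contribution $\sum O(r_t^2\gamma_t^2)(\|\xi_t^u\|^2 + \|\xi_t^b\|^2)$, and the deterministic $\sum O(r_t^2\gamma_t^2 G^2)$.

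For the stochastic inner product, I would rewrite each summand as $(2r_tD_t)\cdot\gamma_t\langle\xi_t^u,(x_*-x_t)/D_t\rangle$ and apply summation by parts against the non-decreasing adapted weight $b_t \coloneqq 2r_tD_t$. Writing $S_t \coloneqq \sum_{s\le t}\gamma_s\langle\xi_s^u,(x_*-x_s)/D_s\rangle$, Abel summation gives $\sum_{t\le\tau}b_ta_t = b_\tau S_\tau - \sum_{t<\tau}(b_{t+1}-b_t)S_t$, hence in absolute value at most $2b_\tau\max_{t\in[\tau]}|S_t| = 4D_\tau r_\tau \max_{t\in[\tau]}|S_t|$, matching the first piece of the claim. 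The normalization by $D_t$ is the essential trick: $\gamma_t\langle\xi_t^u,(x_*-x_t)/D_t\rangle$ admits a uniform almost-sure bound (since $\|\xi_t^u\|\le 2M_t$ by Lemma \ref{lem:err-bound} and $\gamma_tM_t\le\alpha_2$ by the choice of $\gamma_t$), which later enables a time-uniform Freedman-type bound on $\max_t|S_t|$.

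For the bias inner product, Cauchy--Schwarz together with $\|\xi_t^b\|\le 2\sigma^pM_t^{1-p}$ (Lemma \ref{lem:err-bound}), $d_t\le D_\tau$, and $r_t\le r_\tau$, reduces the task to bounding $D_\tau r_\tau\sum\gamma_t\sigma^pM_t^{1-p}$. The key identity $\sigma^p(\gamma_tM_t)/M_t^p \le \alpha_2\log(4/\delta)/(tw_t)$---which combines $\gamma_tM_t\le\alpha_2$ with the choice $M_t\ge\sigma(tw_t/\log(4/\delta))^{1/p}$---turns this into a convergent series bounded by $\alpha_2\log(4/\delta)W$, producing the $4D_\tau r_\tau\alpha_2\log(4/\delta)W$ term. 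For the variance contribution, I would decompose $\|\xi_t^u\|^2 = (\|\xi_t^u\|^2 - \E_t[\|\xi_t^u\|^2]) + \E_t[\|\xi_t^u\|^2]$; the martingale-difference piece, after using $r_t^2\le r_\tau^2$, becomes the second term in the claim, while the $\E_t[\|\xi_t^u\|^2]$ and $\|\xi_t^b\|^2$ pieces are both $O(\sigma^pM_t^{2-p})$ by Lemma \ref{lem:err-bound}, and the squared version of the algebraic identity gives $\gamma_t^2\sigma^pM_t^{2-p}\le\alpha_2^2\log(4/\delta)/(tw_t)$, whose sum contributes the $80\alpha_2^2\log(4/\delta)W$ coefficient. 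Finally $\gamma_tG\le\alpha_1/\sqrt{tw_t}$ immediately yields $\sum 2r_t^2\gamma_t^2G^2\le 2r_\tau^2\alpha_1^2W$.

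The main obstacle is the summation-by-parts step for the stochastic inner product. Because $r_t$ is itself adapted, one cannot simply replace $r_t$ by $r_\tau$ inside a martingale sum without breaking the adaptedness required for subsequent concentration. Normalizing by $D_t$ and extracting the non-decreasing adapted factor $r_tD_t$ via Abel summation is the essential device: it keeps the martingale differences uniformly bounded in magnitude while exposing the prefactor $D_\tau r_\tau$ that multiplies a maximal partial sum, which is exactly the structure needed to invoke the time-uniform Freedman inequality (Corollary \ref{cor:ez-any-freedman-2}) when upgrading this lemma into the high-probability statement of Theorem \ref{thm:lip-dog-prob}.
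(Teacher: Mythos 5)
Your proposal is correct and takes essentially the same route as the paper: start from Lemma~\ref{lem:basic} with $\mu=0$, scale by $2r_t\gamma_t$, split $\xi_t=\xi_t^u+\xi_t^b$, telescope, and isolate the four sums you list. The one step the paper does not spell out --- bounding $\sum_t 2D_tr_t\gamma_t\langle\xi_t^u,(x_*-x_t)/D_t\rangle$ by $4D_\tau r_\tau\max_{t\in[\tau]}|\sum_{s\le t}\gamma_s\langle\xi_s^u,(x_*-x_s)/D_s\rangle|$ --- is delegated to Lemma~5 of \cite{ivgi2023dog}, which is exactly the Abel-summation-against-the-non-decreasing-adapted-weight argument you reconstruct; the remaining bounds (Cauchy--Schwarz with $\|\xi_t^b\|\le 2\sigma^pM_t^{1-p}$, the martingale-difference extraction for $\|\xi_t^u\|^2$, and the algebraic inequalities $\gamma_tM_t\le\alpha_2$, $M_t^p\ge\sigma^p tw_t/\log(4/\delta)$, $\gamma_tG\le\alpha_1/\sqrt{tw_t}$) match the paper line for line.
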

\begin{proof}
For any fixed $\tau\geq1$, we first invoke Lemma \ref{lem:basic}
for $\mu=0$ to get
\begin{align*}
\Delta_{t}+\frac{\eta_{t}^{-1}}{2}d_{t+1}^{2}-\frac{\eta_{t}^{-1}}{2}d_{t}^{2} & \leq\langle\xi_{t},x_{*}-x_{t}\rangle+\eta_{t}\left(2\left\Vert \xi_{t}^{u}\right\Vert ^{2}+2\left\Vert \xi_{t}^{b}\right\Vert ^{2}+G^{2}\right).
\end{align*}
Multiplying both sides by $2\eta_{t}=2r_{t}\gamma_{t}$, we obtain
\begin{align*}
2r_{t}\gamma_{t}\Delta_{t}+d_{t+1}^{2}-d_{t}^{2}\leq & 2\eta_{t}\left\langle \xi_{t},x_{*}-x_{t}\right\rangle +2\eta_{t}^{2}\left(2\left\Vert \xi_{t}^{u}\right\Vert ^{2}+2\left\Vert \xi_{t}^{b}\right\Vert ^{2}+G^{2}\right)\\
= & 2r_{t}\gamma_{t}\left\langle \xi_{t}^{u},x_{*}-x_{t}\right\rangle +2r_{t}\gamma_{t}\left\langle \xi_{t}^{b},x_{*}-x_{t}\right\rangle \\
 & +2r_{t}^{2}\gamma_{t}^{2}\left(2\left\Vert \xi_{t}^{u}\right\Vert ^{2}+2\left\Vert \xi_{t}^{b}\right\Vert ^{2}+G^{2}\right)\\
\overset{(a)}{\leq} & 2D_{t}r_{t}\gamma_{t}\left\langle \xi_{t},\frac{x_{*}-x_{t}}{D_{t}}\right\rangle +2D_{\tau}r_{\tau}\gamma_{t}\left\Vert \xi_{t}^{b}\right\Vert \\
 & +2r_{\tau}^{2}\gamma_{t}^{2}\left(2\left\Vert \xi_{t}^{u}\right\Vert ^{2}+2\left\Vert \xi_{t}^{b}\right\Vert ^{2}+G^{2}\right)\\
= & 2D_{t}r_{t}\gamma_{t}\left\langle \xi_{t},\frac{x_{*}-x_{t}}{D_{t}}\right\rangle +2D_{\tau}r_{\tau}\gamma_{t}\left\Vert \xi_{t}^{b}\right\Vert \\
 & +2r_{\tau}^{2}\gamma_{t}^{2}\left(\left\Vert \xi_{t}^{u}\right\Vert ^{2}-\E_{t}\left[\left\Vert \xi_{t}^{u}\right\Vert ^{2}\right]\right)+2r_{\tau}^{2}\gamma_{t}^{2}\left(2\E_{t}\left[\left\Vert \xi_{t}^{u}\right\Vert ^{2}\right]+2\left\Vert \xi_{t}^{b}\right\Vert ^{2}+G^{2}\right)
\end{align*}
where $(a)$ is by $||x_{t}-x_{*}\|\leq D_{t}\leq D_{\tau}$ and $r_{t}\leq r_{\tau}$
for $t\leq\tau$. Now summing up from $t=1$ to $\tau$ to obtain
\begin{align}
d_{\tau+1}^{2}-d_{1}^{2}+\sum_{t=1}^{\tau}2\mathfrak{r}_{t}\gamma_{t}\Delta_{t}\leq & \sum_{t=1}^{\tau}2D_{t}r_{t}\gamma_{t}\left\langle \xi_{t},\frac{x_{*}-x_{t}}{D_{t}}\right\rangle +D_{\tau}r_{\tau}\sum_{t=1}^{\tau}2\gamma_{t}\left\Vert \xi_{t}^{b}\right\Vert \nonumber \\
 & +r_{\tau}^{2}\sum_{t=1}^{\tau}2\gamma_{t}^{2}\left(\left\Vert \xi_{t}^{u}\right\Vert ^{2}-\E_{t}\left[\left\Vert \xi_{t}^{u}\right\Vert ^{2}\right]\right)\nonumber \\
 & +r_{\tau}^{2}\sum_{t=1}^{\tau}2\gamma_{t}^{2}\left(2\E_{t}\left[\left\Vert \xi_{t}^{u}\right\Vert ^{2}\right]+2\left\Vert \xi_{t}^{b}\right\Vert ^{2}+G^{2}\right).\label{eq:dog}
\end{align}

By Lemma 5 in \cite{ivgi2023dog}, we have
\begin{equation}
\sum_{t=1}^{\tau}2D_{t}r_{t}\gamma_{t}\left\langle \xi_{t},\frac{x_{*}-x_{t}}{D_{t}}\right\rangle \leq4D_{\tau}r_{\tau}\max_{t\in\left[\tau\right]}\left|\sum_{s=1}^{t}\gamma_{s}\left\langle \xi_{s}^{u},\frac{x_{*}-x_{s}}{D_{s}}\right\rangle \right|.\label{eq:dog-1}
\end{equation}

Then we use $\|\xi_{t}^{b}\|\leq2\sigma^{p}M_{t}^{1-p}$ from Lemma
\ref{lem:err-bound} to obtain
\begin{align}
\sum_{t=1}^{\tau}2\gamma_{t}\left\Vert \xi_{t}^{b}\right\Vert \leq & \sum_{t=1}^{\tau}4\cdot\gamma_{t}M_{t}\cdot\frac{\sigma^{p}}{M_{t}^{p}}\leq\sum_{t=1}^{\tau}4\cdot\alpha_{2}\cdot\frac{\log(4/\delta)}{tw_{t}}\leq4\alpha_{2}\log(4/\delta)W.\label{eq:dog-2}
\end{align}

Next we bound $\E_{t}[\|\xi_{t}^{u}\|^{2}]\le10\sigma^{p}M_{t}^{2-p}$
and $\|\xi_{t}^{b}\|^{2}\le10\sigma^{p}M_{t}^{2-p}$ by Lemma \ref{lem:err-bound}
to get
\begin{align*}
2\E_{t}\left[\left\Vert \xi_{t}^{u}\right\Vert ^{2}\right]+2\left\Vert \xi_{t}^{b}\right\Vert ^{2}\leq & 40\sigma^{p}M_{t}^{2-p}.
\end{align*}
Hence, there is
\begin{align}
2\gamma_{t}^{2}\left(2\E_{t}\left[\left\Vert \xi_{t}^{u}\right\Vert ^{2}\right]+2\left\Vert \xi_{t}^{b}\right\Vert ^{2}+G^{2}\right)\leq & 2\gamma_{t}^{2}\left(40\sigma^{p}M_{t}^{2-p}+G^{2}\right)\nonumber \\
\leq & 80(\gamma_{t}M_{t})^{2}\cdot\frac{\sigma^{p}}{M_{t}^{p}}+2\gamma_{t}^{2}G^{2}\nonumber \\
\overset{(b)}{\leq} & \frac{80\alpha_{2}^{2}\log(4/\delta)+2\alpha_{1}^{2}}{tw_{t}}\nonumber \\
\Rightarrow\sum_{t=1}^{\tau}2\gamma_{t}^{2}\left(2\E_{t}\left[\left\Vert \xi_{t}^{u}\right\Vert ^{2}\right]+2\left\Vert \xi_{t}^{b}\right\Vert ^{2}+G^{2}\right)\leq & \sum_{t=1}^{\tau}\frac{80\alpha_{2}^{2}\log(4/\delta)+2\alpha_{1}^{2}}{tw_{t}}\nonumber \\
= & \left(80\alpha_{2}^{2}\log(4/\delta)+2\alpha_{1}^{2}\right)W\label{eq:dog-3}
\end{align}
where $(b)$ is by $\gamma_{t}\leq\frac{\alpha_{2}}{M_{t}}$, $M_{t}\geq\sigma(tw_{t}/\log(4/\delta)){}^{\frac{1}{p}}$
and $\gamma_{t}\leq\frac{\alpha_{1}}{G\sqrt{tw_{t}}}$ from our choices.

Finally, plugging (\ref{eq:dog-1}), (\ref{eq:dog-2}) and (\ref{eq:dog-3})
into (\ref{eq:dog}), we obtain
\begin{align*}
d_{\tau+1}^{2}-d_{1}^{2}+\sum_{t=1}^{\tau}2\mathfrak{r}_{t}\gamma_{t}\Delta_{t}\leq & 4D_{\tau}r_{\tau}\max_{t\in\left[\tau\right]}\left|\sum_{s=1}^{t}\gamma_{s}\left\langle \xi_{s}^{u},\frac{x_{*}-x_{s}}{D_{s}}\right\rangle \right|+2r_{\tau}^{2}\sum_{t=1}^{\tau}\gamma_{t}^{2}\left(\left\Vert \xi_{t}^{u}\right\Vert ^{2}-\E_{t}\left[\left\Vert \xi_{t}^{u}\right\Vert ^{2}\right]\right)\\
 & +4D_{\tau}r_{\tau}\alpha_{2}\log(4/\delta)W+r_{\tau}^{2}\left(80\alpha_{2}^{2}\log(4/\delta)+2\alpha_{1}^{2}\right)W.
\end{align*}
\end{proof}

Next, as before, we bound the martingale difference sequences$|\sum_{s=1}^{t}\gamma_{s}\langle\xi_{s}^{u},\frac{x_{*}-x_{s}}{D_{s}}\rangle|$
and $\sum_{t=1}^{\tau}\gamma_{t}^{2}(\|\xi_{t}^{u}\|^{2}-\E_{t}[\|\xi_{t}^{u}\|^{2}])$
respectively.
\begin{lem}
\label{lem:dog-inner}When $\mu=0$, under the choices described in
Theorem \ref{thm:lip-dog-prob}, we have with probability at least
$1-\frac{\delta}{2}$, for any $\tau\geq1$,
\[
\left|\sum_{t=1}^{\tau}\gamma_{t}\left\langle \xi_{t}^{u},\frac{x_{*}-x_{t}}{D_{t}}\right\rangle \right|\leq\alpha_{2}\left(\frac{4}{3}+2\sqrt{5W}\right)\log\frac{4}{\delta}.
\]
\end{lem}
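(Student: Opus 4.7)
The plan is to apply the any-time version of Freedman's inequality (Corollary \ref{cor:ez-any-freedman-2}) to the sequence $Z_t \coloneqq \gamma_t \langle \xi_t^u, (x_*-x_t)/D_t\rangle$. First I would verify that this is a martingale difference sequence adapted to $\F_t$: indeed $\gamma_t$, $x_t$, and $D_t$ are $\F_{t-1}$-measurable (note $D_t$ depends only on $x_1,\dots,x_t$ and $x_t \in \F_{t-1}$), while $\E_t[\xi_t^u] = 0$ by definition of $\xi_t^u$, so $\E_t[Z_t] = 0$.

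Next I would compute an almost-sure bound on $|Z_t|$ and a conditional variance bound. Using $\|x_* - x_t\| = d_t \leq D_t$, Lemma \ref{lem:err-bound} ($\|\xi_t^u\| \leq 2M_t$), and $\gamma_t M_t \leq \alpha_2$ from the choice of $\gamma_t$, one gets $|Z_t| \leq \gamma_t \|\xi_t^u\| \leq 2\gamma_t M_t \leq 2\alpha_2$, so we may take $R = 2\alpha_2$. For the variance, Cauchy--Schwarz and Lemma \ref{lem:err-bound} give $\E_t[Z_t^2] \leq \gamma_t^2 \E_t[\|\xi_t^u\|^2] \leq 10\gamma_t^2 \sigma^p M_t^{2-p} = 10(\gamma_t M_t)^2 \cdot \sigma^p/M_t^p$. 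Using $\gamma_t M_t \leq \alpha_2$ and the choice $M_t \geq \sigma(t w_t/\log(4/\delta))^{1/p}$ (which gives $\sigma^p/M_t^p \leq \log(4/\delta)/(t w_t)$), this bound becomes $10\alpha_2^2 \log(4/\delta)/(t w_t)$. Summing and using the assumption $\sum_{t=1}^\infty 1/(t w_t) \leq W$ yields the almost-sure bound $\sum_{t=1}^\infty \E_t[Z_t^2] \leq F \coloneqq 10\alpha_2^2 W \log(4/\delta)$.

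Plugging $R = 2\alpha_2$ and $F = 10\alpha_2^2 W\log(4/\delta)$ into Corollary \ref{cor:ez-any-freedman-2}, I obtain with probability at least $1-\delta/2$ the uniform bound, for all $\tau \geq 1$,
\[
\left|\sum_{t=1}^{\tau} Z_t\right| \leq \frac{2R}{3}\log\frac{4}{\delta} + \sqrt{2F\log\frac{4}{\delta}} = \frac{4\alpha_2}{3}\log\frac{4}{\delta} + \alpha_2\sqrt{20W}\,\log\frac{4}{\delta} = \alpha_2\left(\frac{4}{3} + 2\sqrt{5W}\right)\log\frac{4}{\delta},
\]
which is exactly the claimed inequality.

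No real obstacle is expected: the whole argument is essentially bookkeeping to verify that the constants from our clipping magnitude $M_t$ and step size $\gamma_t$ interact with the summability hypothesis $\sum 1/(tw_t) \leq W$ so that the variance factors telescope into $W$. The one subtle point worth double-checking is that the normalization by $D_t$ (rather than $\mathfrak{D}_t$ as in Lemma \ref{lem:lip-prob-concen}) still yields $|Z_t| \leq \gamma_t \|\xi_t^u\|$ almost surely, which holds because $d_t/D_t \in [0,1]$ by definition of $D_t = \max_{s\leq t} d_s$.
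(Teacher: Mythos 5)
Your proof is correct and follows essentially the same route as the paper: identify $Z_t = \gamma_t\langle\xi_t^u,(x_*-x_t)/D_t\rangle$ as a martingale difference sequence, bound $|Z_t|\leq 2\alpha_2$ via $\gamma_t M_t\leq\alpha_2$, bound $\E_t[Z_t^2]\leq 10\alpha_2^2\log(4/\delta)/(tw_t)$ via $M_t\geq\sigma(tw_t/\log(4/\delta))^{1/p}$, sum using $\sum 1/(tw_t)\leq W$, and invoke Corollary \ref{cor:ez-any-freedman-2}. The constants and the final arithmetic match exactly.
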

\begin{proof}
Note that $Z_{t}\coloneqq\gamma_{t}\left\langle \xi_{t}^{u},\frac{x_{*}-x_{t}}{D_{t}}\right\rangle \in\F_{t}$
is a martingale difference sequence. Besides, there is
\begin{align*}
\left|Z_{t}\right| & \leq\gamma_{t}\left\Vert \xi_{t}^{u}\right\Vert \leq\gamma_{t}\cdot2M_{t}\le2\alpha_{2};\\
\E_{t}\left[\left(Z_{t}\right)^{2}\right] & \leq\gamma_{t}^{2}\E_{t}\left[\left\Vert \xi_{t}^{u}\right\Vert ^{2}\right]\leq\gamma_{t}^{2}\cdot10\sigma^{p}M_{t}^{2-p}\leq\frac{10\alpha_{2}^{2}\log(4/\delta)}{tw_{t}}.
\end{align*}
Note that for any $T\geq1$
\[
\sum_{t=1}^{T}\E_{t}\left[\left(Z_{t}\right)^{2}\right]\leq10\alpha_{2}^{2}\log(4/\delta)\sum_{t=1}^{T}\frac{1}{tw_{t}}\leq10\alpha_{2}^{2}\log(4/\delta)W.
\]
Now let $R=2\alpha_{2}$ and $F=10\alpha_{2}^{2}\log(4/\delta)W$,
by Freedman's inequality (Corollary \ref{cor:ez-any-freedman-2}),
we have with probability at least $1-\frac{\delta}{2}$, for any $\tau\geq1$
\[
\left|\sum_{t=1}^{\tau}\gamma_{t}\left\langle \xi_{t}^{u},\frac{x_{*}-x_{t}}{D_{t}}\right\rangle \right|\leq\frac{2R}{3}\log\frac{4}{\delta}+\sqrt{2F\log\frac{4}{\delta}}\leq\alpha_{2}\left(\frac{4}{3}+2\sqrt{5W}\right)\log\frac{4}{\delta}.
\]
\end{proof}

\begin{lem}
\label{lem:dog-xi-u}When $\mu=0$, under the choices described in
Theorem \ref{thm:lip-dog-prob}, we have with probability at least
$1-\frac{\delta}{2}$, for any $\tau\geq1$,
\[
\sum_{t=1}^{\tau}\gamma_{t}^{2}\left(\left\Vert \xi_{t}^{u}\right\Vert ^{2}-\E_{t}\left[\left\Vert \xi_{t}^{u}\right\Vert ^{2}\right]\right)\leq\alpha_{2}^{2}\left(\frac{16}{3}+4\sqrt{5W}\right)\log\frac{4}{\delta}.
\]
\end{lem}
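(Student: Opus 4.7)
The plan is to apply Freedman's inequality (specifically Corollary \ref{cor:ez-any-freedman-2} for the any-time version) to the martingale difference sequence $Y_t \coloneqq \gamma_t^2(\|\xi_t^u\|^2 - \E_t[\|\xi_t^u\|^2])$, which is adapted to $\F_t$. This is essentially the same template used in Lemma \ref{lem:xi-u-lip} and in the preceding Lemma \ref{lem:dog-inner}; the only difference is that the step sizes $\gamma_t$ and clipping magnitudes $M_t$ here are the DoG-style choices from Theorem \ref{thm:lip-dog-prob}, so the key identities to exploit are $\gamma_t M_t \leq \alpha_2$ (from $\gamma_t \leq \alpha_2/M_t$) and $\sigma^p/M_t^p \leq \log(4/\delta)/(tw_t)$ (from $M_t \geq \sigma(tw_t/\log(4/\delta))^{1/p}$).

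First I would bound $|Y_t|$ using Lemma \ref{lem:err-bound}: since $\|\xi_t^u\| \leq 2M_t$ we get $\|\xi_t^u\|^2 \leq 4M_t^2$ and $\E_t[\|\xi_t^u\|^2] \leq 4M_t^2$, hence
\[
|Y_t| \leq \gamma_t^2\left(\|\xi_t^u\|^2 + \E_t[\|\xi_t^u\|^2]\right) \leq 8\gamma_t^2 M_t^2 \leq 8\alpha_2^2 \eqqcolon R.
\]
Next, I would control the conditional second moment by
\[
\E_t[Y_t^2] \leq \gamma_t^4 \E_t[\|\xi_t^u\|^4] \leq \gamma_t^4 \cdot 4M_t^2 \cdot 10\sigma^p M_t^{2-p} = 40(\gamma_t M_t)^2 \cdot \gamma_t^2 M_t^2 \cdot \frac{\sigma^p}{M_t^p} \leq \frac{40\alpha_2^4 \log(4/\delta)}{tw_t},
\]
using the $\E_t[\|\xi_t^u\|^2] \leq 10\sigma^p M_t^{2-p}$ bound from Lemma \ref{lem:err-bound} together with the two displayed inequalities on $\gamma_t M_t$ and $\sigma^p/M_t^p$. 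Summing and using $\sum_{t\geq 1}\frac{1}{tw_t}\leq W$ gives $\sum_{t=1}^T \E_t[Y_t^2] \leq 40\alpha_2^4 \log(4/\delta) W \eqqcolon F$ for any $T \geq 1$.

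Finally I would invoke Corollary \ref{cor:ez-any-freedman-2} with these values of $R$ and $F$ to conclude that with probability at least $1-\delta/2$, for every $\tau \geq 1$,
\[
\sum_{t=1}^{\tau}\gamma_t^2\left(\|\xi_t^u\|^2 - \E_t[\|\xi_t^u\|^2]\right) \leq \frac{2R}{3}\log\frac{4}{\delta} + \sqrt{2F\log\frac{4}{\delta}} = \frac{16\alpha_2^2}{3}\log\frac{4}{\delta} + 4\alpha_2^2\sqrt{5W}\log\frac{4}{\delta},
\]
which equals $\alpha_2^2(\frac{16}{3}+4\sqrt{5W})\log\frac{4}{\delta}$, as claimed. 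There is no genuine obstacle here beyond bookkeeping: the only mild subtlety is making sure both the almost-sure uniform bound on $|Y_t|$ and the variance bound on $\E_t[Y_t^2]$ exploit the DoG-specific choice of $M_t$ through the ratio $\sigma^p/M_t^p$, so that the resulting $F$ is summable via the defining property $\sum_t 1/(tw_t) \leq W$ rather than blowing up in $T$. This is what gives an any-time statement rather than one that degrades with $T$.
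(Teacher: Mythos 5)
Your proof matches the paper's essentially line by line: same martingale difference sequence $Y_t=\gamma_t^2\bigl(\|\xi_t^u\|^2-\E_t[\|\xi_t^u\|^2]\bigr)$, same almost-sure bound $R=8\alpha_2^2$ and conditional-variance bound $F=40\alpha_2^4\log(4/\delta)W$ via Lemma \ref{lem:err-bound} together with $\gamma_t M_t\le\alpha_2$ and $\sigma^p/M_t^p\le\log(4/\delta)/(tw_t)$, and the same invocation of Corollary \ref{cor:ez-any-freedman-2}. The arithmetic $\frac{2R}{3}\log\frac{4}{\delta}+\sqrt{2F\log\frac{4}{\delta}}=\alpha_2^2\bigl(\frac{16}{3}+4\sqrt{5W}\bigr)\log\frac{4}{\delta}$ also checks out, so the proposal is correct and takes the same approach as the paper.
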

\begin{proof}
Note that $Z_{t}\coloneqq\gamma_{t}^{2}\left(\left\Vert \xi_{t}^{u}\right\Vert ^{2}-\E_{t}\left[\left\Vert \xi_{t}^{u}\right\Vert ^{2}\right]\right)\in\F_{t}$
is a martingale difference sequence. Besides, there is
\begin{align*}
\left|Z_{t}\right| & \leq\gamma_{t}^{2}\left(\left\Vert \xi_{t}^{u}\right\Vert ^{2}+\E_{t}\left[\left\Vert \xi_{t}^{u}\right\Vert ^{2}\right]\right)\leq\gamma_{t}^{2}\cdot8M_{t}^{2}\le8\alpha_{2}^{2};\\
\E_{t}\left[\left(Z_{t}\right)^{2}\right] & \leq\gamma_{t}^{4}\E_{t}\left[\left\Vert \xi_{t}^{u}\right\Vert ^{4}\right]\leq\gamma_{t}^{4}\cdot4M_{t}^{2}\cdot10\sigma^{p}M_{t}^{2-p}\leq\frac{40\alpha_{2}^{4}\log(4/\delta)}{tw_{t}}
\end{align*}
Note that for any $T\geq1$
\[
\sum_{t=1}^{T}\E_{t}\left[\left(Z_{t}\right)^{2}\right]\leq40\alpha_{2}^{2}\log(4/\delta)\sum_{t=1}^{T}\frac{1}{tw_{t}}\leq40\alpha_{2}^{4}\log(4/\delta)W.
\]
Now let $R=8\alpha_{2}^{2}$ and $F=40\alpha_{2}^{4}\log(4/\delta)W$,
by Freedman's inequality (Corollary \ref{cor:ez-any-freedman-2}),
we have with probability at least $1-\frac{\delta}{2}$, for any $\tau\geq1$
\[
\sum_{t=1}^{\tau}\gamma_{t}^{2}\left(\left\Vert \xi_{t}^{u}\right\Vert ^{2}-\E_{t}\left[\left\Vert \xi_{t}^{u}\right\Vert ^{2}\right]\right)\leq\frac{2R}{3}\log\frac{4}{\delta}+\sqrt{2F\log\frac{4}{\delta}}\leq\alpha_{2}^{2}\left(\frac{16}{3}+4\sqrt{5W}\right)\log\frac{4}{\delta}.
\]
\end{proof}

The next inequality is inspired by Lemma 3 in \cite{ivgi2023dog}.
Our result is slightly tighter than the bound given in \cite{ivgi2023dog}.
\begin{lem}
\label{lem:dog-sequnce}Suppose $y_{t\geq1}>0$ is a non-decreasing
sequence, then for any $T\geq1$, there is
\[
\max_{t\in\left[T\right]}\sum_{s=1}^{t}\frac{y_{s}}{y_{t+1}}\geq\frac{T}{\left(\frac{y_{T+1}}{y_{1}}\right)^{\frac{1}{T}}\left(1+\log\frac{y_{T+1}}{y_{1}}\right)}.
\]
\end{lem}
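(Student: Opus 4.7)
Define $B_t := S_t/y_{t+1}$ for $t \in [T]$ (where $S_t = \sum_{s=1}^t y_s$), so that the quantity to lower-bound is $M := \max_{t \in [T]} B_t$. Letting $\phi_t := y_t/y_{t+1} \in (0,1]$, the recursion $B_t = \phi_t(1+B_{t-1})$ with $B_0 = 0$ is immediate, and the hypothesis $y_{T+1}/y_1 = R$ becomes $\prod_{t=1}^T \phi_t = 1/R$. Solving $\phi_t = B_t/(1+B_{t-1})$ and taking the telescoping product yields the key identity
\[
R\, B_T \;=\; \prod_{t=1}^{T-1}\bigl(1 + 1/B_t\bigr),
\]
which, together with the pointwise bounds $B_t \le t$ (from $B_t \le 1+B_{t-1}$ and $B_0=0$) and $B_t \le M$, is the backbone of the proof: we use it to lower-bound $R$ in terms of $M$ and then invert.

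\textbf{Case $M \ge 1$.} Applying $B_t \le \min(t, M)$ termwise and splitting the product at $t \approx M$, the telescope $\prod_{t=1}^{M-1}(1+1/t) = M$ gives $R B_T \ge M(1+1/M)^{T-M}$ and hence $R \ge (1+1/M)^{T-M}$ (a short Bernoulli interpolation $(1+1/M)^{1-(M-\lfloor M\rfloor)} \le 1 + (1-(M-\lfloor M\rfloor))/M$ extends this to non-integer $M$). Plugging into $M R^{1/T}(1+\log R)$ and using $\log(1+1/M)\ge 1/(M+1)$ to deduce $1+\log R \ge (T+1)/(M+1)$, the target $M R^{1/T}(1+\log R) \ge T$ reduces, after cancellation, to
\[
(M/T)\log(1+1/M) \;\le\; \log(1+1/T).
\]
Setting $g(M) := (M/T)\log(1+1/M)$, one checks $g'(M) = (1/T)\bigl[\log(1+1/M) - 1/(M+1)\bigr] \ge 0$ (again from $\log(1+x) \ge x/(1+x)$), so $g$ is nondecreasing on $(0,T]$ with $g(T) = \log(1+1/T)$, closing the case.

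\textbf{Case $M < 1$.} Here $\min(t,M) = M$ throughout, and the identity directly gives $R \ge (1+1/M)^{T-1}/M$. The factor $M R^{1/T}$ simplifies cleanly to $(M+1)^{(T-1)/T}$, and combining $\log(1+1/M) \ge 1/(M+1)$ with $-\log M \ge 0$ yields $1+\log R \ge (M+T)/(M+1)$. Combining, $M R^{1/T}(1+\log R) \ge (M+T)/(M+1)^{1/T}$, and the remaining inequality $(M+T)/(M+1)^{1/T} \ge T$ is precisely Bernoulli's $(1+M)^{1/T} \le 1 + M/T$. The main delicate points are the telescoping bookkeeping plus the brief non-integer extension in the first case and the monotonicity check for $g$; everything else is mechanical once $R \ge (1+1/M)^{T-M}$ (resp.\ $(1+1/M)^{T-1}/M$) is in hand.
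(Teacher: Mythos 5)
Your proof is correct, but it takes a genuinely different route from the paper's. The paper argues \emph{additively}: starting from the recursion $Y_t - \frac{y_t}{y_{t+1}}Y_{t-1} = \frac{y_t}{y_{t+1}}$, it sums over $t\in[T]$, factors out $\max_{t\in[T]}Y_t$ against the nonnegative weight $1+\sum_{t=1}^{T-1}\bigl(1-\frac{y_{t+1}}{y_{t+2}}\bigr)$, bounds this weight by $1+\log\frac{y_{T+1}}{y_1}$ via $1-\frac{1}{x}\le\log x$ and telescoping, and produces the $(y_1/y_{T+1})^{1/T}$ factor by AM--GM on $\sum_{t=1}^T \frac{y_t}{y_{t+1}}$. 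This is short and needs no case analysis. Your proof instead argues \emph{multiplicatively}: the exact identity $R\,B_T=\prod_{t=1}^{T-1}(1+1/B_t)$ is a nice structural observation that the paper never makes explicit, and coupling it with $B_t\le\min(t,M)$ lets you isolate a clean lower bound on $R$. The price is a two-case split on $M\ge1$ versus $M<1$, a Bernoulli interpolation to handle non-integer $M$ in the first case, and a separate monotonicity check for $g(M)=(M/T)\log(1+1/M)$. I verified these steps: the termwise bound gives $R\ge(1+1/M)^{T-M}$ (resp.\ $(1+1/M)^{T-1}/M$), the "cancellation" indeed reduces the $M\ge1$ target to $(M/T)\log(1+1/M)\le\log(1+1/T)$, and the $M<1$ case closes with $(1+M)^{1/T}\le1+M/T$. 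So both proofs are sound; the paper's is tighter and uniform, while yours surfaces the multiplicative identity and avoids AM--GM at the cost of more bookkeeping.
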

\begin{proof}
Let $Y_{t}=\sum_{s=1}^{t}\frac{y_{s}}{y_{t+1}}$ where $Y_{0}=0$,
then we have
\begin{align*}
y_{t+1}Y_{t}-y_{t}Y_{t-1}= & y_{t}\\
\Rightarrow Y_{t}-\frac{y_{t}}{y_{t+1}}Y_{t-1}= & \frac{y_{t}}{y_{t+1}}.
\end{align*}
Summing up from $t=1$ to $T$ to get
\begin{align*}
Y_{T}+\sum_{t=1}^{T-1}\left(1-\frac{y_{t+1}}{y_{t+2}}\right)Y_{t}= & \sum_{t=1}^{T}\frac{y_{t}}{y_{t+1}}\overset{(a)}{\geq}T\left(\frac{y_{1}}{y_{T+1}}\right)^{\frac{1}{T}}\\
\Rightarrow\left(\max_{t\in\left[T\right]}Y_{t}\right)\left(1+\sum_{t=1}^{T-1}1-\frac{y_{t+1}}{y_{t+2}}\right)\geq & T\left(\frac{y_{1}}{y_{T+1}}\right)^{\frac{1}{T}}.
\end{align*}
Where $(a)$ is due to AM-GM inequality. Now by using $1-\frac{1}{x}\leq\log x$,
we obtain
\[
\sum_{t=1}^{T-1}1-\frac{y_{t+1}}{y_{t+2}}\leq\sum_{t=1}^{T-1}\log\frac{y_{t+2}}{y_{t+1}}=\log\frac{y_{T+1}}{y_{2}}\leq\log\frac{y_{T+1}}{y_{1}}.
\]
Finally, we conclude
\begin{align*}
\left(\max_{t\in\left[T\right]}Y_{t}\right)\left(1+\log\frac{y_{T+1}}{y_{1}}\right)\geq & T\left(\frac{y_{1}}{y_{T+1}}\right)^{\frac{1}{T}}\\
\Rightarrow\max_{t\in\left[T\right]}Y_{t}\geq & \frac{T}{\left(\frac{y_{T+1}}{y_{1}}\right)^{\frac{1}{T}}\left(1+\log\frac{y_{T+1}}{y_{1}}\right)}.
\end{align*}
\end{proof}

Now we are ready to prove Theorem \ref{thm:lip-dog-prob}.

\begin{proof}[Proof of Theorem \ref{thm:lip-dog-prob}]
By Applying Lemma \ref{lem:basic-dog}, \ref{lem:dog-inner} and
\ref{lem:dog-xi-u}, we have with probability at least $1-\delta$,
for any $\tau\geq1$
\begin{align*}
d_{\tau+1}^{2}-d_{1}^{2}+\sum_{t=1}^{\tau}2r_{t}\gamma_{t}\Delta_{t}\leq & D_{\tau}r_{\tau}\alpha_{2}\left(\frac{16}{3}+8\sqrt{5W}\right)\log\frac{4}{\delta}+r_{\tau}^{2}\alpha_{2}^{2}\left(\frac{32}{3}+8\sqrt{5W}\right)\log\frac{4}{\delta}\\
 & +4D_{\tau}r_{\tau}\alpha_{2}\log(4/\delta)W+r_{\tau}^{2}\left(80\alpha_{2}^{2}\log(4/\delta)+2\alpha_{1}^{2}\right)W\\
= & D_{\tau}r_{\tau}\alpha_{2}\left(\frac{16}{3}+8\sqrt{5W}+4W\right)\log\frac{4}{\delta}+r_{\tau}^{2}\left[2\alpha_{1}^{2}W+\alpha_{2}^{2}\left(\frac{32}{3}+8\sqrt{5W}+80W\right)\log\frac{4}{\delta}\right].
\end{align*}
Note that we choose
\begin{align*}
\alpha_{1} & =\frac{1}{\sqrt{32W}}\Rightarrow2\alpha_{1}^{2}W=\frac{1}{16};\\
\alpha_{2} & =\frac{1}{8\left(\frac{16}{3}+8\sqrt{5W}+4W\right)\log\frac{4}{\delta}}\land\frac{1}{\sqrt{16\left(\frac{32}{3}+8\sqrt{5W}+80W\right)\log\frac{4}{\delta}}}\\
 & \Rightarrow\begin{cases}
\alpha_{2}\left(\frac{16}{3}+8\sqrt{5W}+4W\right)\log\frac{4}{\delta}\leq\frac{1}{8}\\
\alpha_{2}^{2}\left(\frac{32}{3}+8\sqrt{5W}+80W\right)\log\frac{4}{\delta}\leq\frac{1}{16}
\end{cases}.
\end{align*}
Hence, there is
\begin{equation}
d_{\tau+1}^{2}-d_{1}^{2}+\sum_{t=1}^{\tau}2\mathfrak{r}_{t}\gamma_{t}\Delta_{t}\leq\frac{D_{\tau}r_{\tau}+r_{\tau}^{2}}{8}.\label{eq:dog-final}
\end{equation}
Recall that 
\begin{align*}
r_{t}= & \left(\max_{s\in\left[t\right]}\|x_{1}-x_{s}\|\right)\lor r\leq\left(\max_{s\in\left[t\right]}\|x_{s}-x_{*}\|+d_{1}\right)\lor r=\left(D_{t}+d_{1}\right)\lor r.
\end{align*}
Thus
\begin{align*}
\frac{D_{\tau}r_{\tau}+r_{\tau}^{2}}{8}\leq & \frac{D_{\tau}\left[\left(D_{\tau}+d_{1}\right)\lor r\right]+\left[\left(D_{\tau}+d_{1}\right)\lor r\right]^{2}}{8}\\
\leq & \frac{\left[\left(D_{\tau}+d_{1}\right)\lor r\right]^{2}}{4}\leq\frac{D_{\tau}^{2}+d_{1}^{2}+r^{2}/2}{2}
\end{align*}
which implies
\begin{align*}
d_{\tau+1}^{2}-d_{1}^{2}+\sum_{t=1}^{\tau}2r_{t}\gamma_{t}\Delta_{t}\leq & \frac{D_{\tau}^{2}+d_{1}^{2}+r^{2}/2}{2}\\
\Rightarrow d_{\tau+1}^{2}+\sum_{t=1}^{\tau}2r_{t}\gamma_{t}\Delta_{t}\leq & \frac{D_{\tau}^{2}+3d_{1}^{2}+r^{2}/2}{2}.
\end{align*}

Noticing that $\sum_{t=1}^{\tau}2\mathfrak{r}_{t}\gamma_{t}\Delta_{t}\geq0,\forall\tau\geq1$,
then by a simple induction, we have for any $\tau\geq1$
\[
d_{\tau+1}^{2}\leq3d_{1}^{2}+r^{2}/2,
\]
which immediately implies $D_{\tau}^{2}\leq3d_{1}^{2}+r^{2}/2$. As
a consequence, there are
\[
d_{\tau}=(d_{1}+r),D_{\tau}=O(d_{1}+r),r_{\tau}=O(d_{1}+r),\forall\tau\geq1.
\]
Now we employ (\ref{eq:dog-final}) again to get for any $\tau\geq1$
\begin{align*}
\sum_{t=1}^{\tau}2r_{t}\gamma_{t}\Delta_{t}\leq & \frac{D_{\tau}r_{\tau}+r_{\tau}^{2}}{8}+d_{1}^{2}-d_{\tau+1}^{2}\\
= & \frac{D_{\tau}r_{\tau}+r_{\tau}^{2}}{8}+\left(d_{1}-d_{\tau+1}\right)\left(d_{1}+d_{\tau+1}\right)\\
\leq & \frac{D_{\tau}r_{\tau}+r_{\tau}^{2}}{8}+\left\Vert x_{1}-x_{\tau+1}\right\Vert \left(d_{1}+d_{\tau+1}\right)\\
\leq & \left(\frac{D_{\tau}+r_{\tau}}{8}+d_{1}+d_{\tau+1}\right)r_{\tau+1}\\
\leq & \left(\frac{D_{\tau}+\left(D_{\tau}+d_{1}\right)\lor r}{8}+d_{1}+d_{\tau+1}\right)r_{\tau+1}\\
= & O(d_{1}+r)r_{\tau+1}.
\end{align*}
Recall that
\[
\gamma_{t}=\frac{\alpha_{1}}{G\sqrt{tw_{t}}}\land\frac{\alpha_{2}}{M_{t}},M_{t}=2G\lor\sigma(tw_{t}/\log(4/\delta)){}^{\frac{1}{p}},w_{t}\text{ is non-decreasing}.
\]
So we have
\[
2\gamma_{\tau}\sum_{t=1}^{\tau}r_{t}\Delta_{t}\leq\sum_{t=1}^{\tau}2r_{t}\gamma_{t}\Delta_{t}\leq O(d_{1}+r)r_{\tau+1}\Rightarrow F(\bar{x}_{\tau})-F(x_{*})\leq\frac{O(d_{1}+r)}{\gamma_{\tau}\sum_{t=1}^{\tau}\frac{r_{t}}{r_{\tau+1}}}
\]
where $\bar{x}_{\tau}=\frac{\sum_{t=1}^{\tau}r_{t}x_{t}}{\sum_{t=1}^{\tau}r_{t}}$.

Now we have with probability at least $1-\delta$, for any $T\geq1$
(relabeling $\tau$ by $T$)
\[
F(\bar{x}_{T})-F(x_{*})\leq\frac{O(d_{1}+r)}{\gamma_{T}\sum_{t=1}^{T}\frac{r_{t}}{r_{T+1}}}.
\]
Invoking Lemma \ref{lem:dog-sequnce} for $r_{t}$ to get
\[
\sum_{t=1}^{I(T)}\frac{r_{t}}{r_{I(T)+1}}\geq\frac{T}{\left(\frac{r_{T+1}}{r_{1}}\right)^{\frac{1}{T}}\left(1+\log\frac{r_{T+1}}{r_{1}}\right)}=\frac{T}{\left(\frac{r_{T+1}}{r}\right)^{\frac{1}{T}}\left(1+\log\frac{r_{T+1}}{r}\right)}\geq\frac{T}{O\left(\left(\frac{r+d_{1}}{r}\right)^{\frac{1}{T}}\left(1+\log\frac{r+d_{1}}{r}\right)\right)}
\]
where $I(T)\in\mathrm{argmax}_{t\in\left[T\right]}\sum_{s=1}^{t}\frac{r_{s}}{r_{t+1}}$
and the last inequality holds because $r_{T+1}=O(d_{1}+r)$ is uniformly
upper bounded. Then
\begin{align*}
F(\bar{x}_{I(T)})-F(x_{*})\leq & O\left(\left(1+\log\frac{r+d_{1}}{r}\right)\left(r+d_{1}\right)\frac{\left(\frac{r+d_{1}}{r}\right)^{\frac{1}{T}}}{T\gamma_{I(T)}}\right)\\
\leq & O\left(\left(1+\log\frac{r+d_{1}}{r}\right)\left(r+d_{1}\right)\frac{\left(\frac{r+d_{1}}{r}\right)^{\frac{1}{T}}}{T\gamma_{T}}\right).
\end{align*}
When $T\geq\Omega(\log\frac{r+d_{1}}{r})$, we have $\left(\frac{r+d_{1}}{r}\right)^{\frac{1}{T}}=O(1)$,
which implies
\[
F(\bar{x}_{I(T)})-F(x_{*})\leq O\left(\left(1+\log\frac{r+d_{1}}{r}\right)\left(r+d_{1}\right)\frac{1}{T\gamma_{T}}\right).
\]
By plugging in
\begin{align*}
M_{t}= & 2G\lor\sigma(tw_{t}/\log(4/\delta)){}^{\frac{1}{p}},\\
\gamma_{t}= & \frac{\alpha_{1}}{G\sqrt{tw_{t}}}\land\frac{\alpha_{2}}{M_{t}},\\
\alpha_{1}= & \frac{1}{\sqrt{32W}},\\
\alpha_{2}= & \frac{1}{8\left(\frac{16}{3}+8\sqrt{5W}+4W\right)\log\frac{4}{\delta}}\land\frac{1}{\sqrt{16\left(\frac{32}{3}+8\sqrt{5W}+80W\right)\log\frac{4}{\delta}}},
\end{align*}
we conclude the proof.
\end{proof}

\section{Additional Theoretical Analysis When $\mu>0$\label{sec:app-str}}

In this section, we aim to prove Theorems \ref{thm:str-prob} and
\ref{thm:str-exp}.

\subsection{High-Probability Analysis When $\mu>0$\label{subsec:app-str-prob}}

To start with, we introduce a basic inequality in Lemma \ref{lem:basic-str-prob}.
\begin{lem}
\label{lem:basic-str-prob}When $\mu>0$, under the choices of $M_{t}=2G\lor Mt^{\frac{1}{p}}$
and $\eta_{t}=\frac{4}{\mu(t+1)}$, for any $\tau\in\left[T\right]$,
we have
\begin{align*}
\frac{\mu(\tau+1)\tau}{8}d_{\tau+1}^{2}+\sum_{t=1}^{\tau}t\Delta_{t}\leq & \mathbb{D}_{\tau}\left(\sum_{t=1}^{\tau}t\left\langle \xi_{t}^{u},\frac{x_{*}-x_{t}}{\mathbb{D}_{t}}\right\rangle +\frac{8\left(\left\Vert \xi_{t}^{u}\right\Vert ^{2}-\E_{t}\left[\left\Vert \xi_{t}^{u}\right\Vert ^{2}\right]\right)}{\mu C}\right)\\
 & +320\mathbb{D}_{\tau}\left(M\sqrt{(\sigma/M)^{p}+(\sigma/M)^{2p}}T^{\frac{1}{p}}+G\sqrt{1+(\sigma/G)^{p}}\sqrt{T}\right)
\end{align*}
where
\begin{align*}
\mathbb{D}_{t} & \coloneqq C\lor\max_{s\in\left[t\right]}\sqrt{s(s-1)}d_{s};\\
C & \coloneqq\frac{G\sqrt{1+(\sigma/G)^{p}}}{\mu}\sqrt{T}\lor\frac{M\sqrt{(\sigma/M)^{p}+(\sigma/M)^{2p}}}{\mu}T^{\frac{1}{p}}\lor\frac{M_{T}}{\mu}.
\end{align*}
\end{lem}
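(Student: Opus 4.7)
I plan to start from Lemma \ref{lem:basic} with $\mu > 0$ and the choice $\eta_t = 4/(\mu(t+1))$ substituted, multiply the one-step bound by $t$, and sum over $t \in [\tau]$. Because $(\eta_t^{-1}-\mu)/2 = \mu(t-3)/8$ rather than the ``clean'' $\mu(t-1)/8$, the weighted telescoping does not vanish exactly: beyond the target $\mu \tau(\tau+1) d_{\tau+1}^2/8$, it leaves a non-negative quadratic reserve $\sum_{t=1}^\tau t \mu d_t^2/4$ on the left-hand side, which I keep---this reserve is precisely the room needed to absorb Young-type bias estimates below.

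Next I split $\xi_t = \xi_t^u + \xi_t^b$. Applying Young's inequality with scale $\mu/(4t)$ to the bias inner product yields $t\langle \xi_t^b, x_*-x_t\rangle \leq 2t\|\xi_t^b\|^2/\mu + t\mu d_t^2/8$, and the quadratic piece is absorbed by the reserve. For the stochastic term $2t\eta_t\|\xi_t^u\|^2$, I further split as $(\|\xi_t^u\|^2 - \E_t[\|\xi_t^u\|^2]) + \E_t[\|\xi_t^u\|^2]$. Lemma \ref{lem:err-bound} then bounds both the conditional expectation and $\|\xi_t^b\|^2$ by $10\sigma^p M_t^{2-p}$, while $2t\eta_t \leq 8/\mu$ controls the prefactors. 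Combining $M_t \leq 2G + M t^{1/p}$ with the closed-form summations $\sum_{t \leq T} t^{(2-p)/p} = O(T^{2/p})$ and $\sum_{t \leq T} 1 = T$, the aggregate deterministic contribution is, up to a generous multiplicative constant, $M\sqrt{(\sigma/M)^p + (\sigma/M)^{2p}} T^{1/p} + G\sqrt{1 + (\sigma/G)^p}\sqrt{T}$---and by construction this is $\leq \mu C \leq \mu \mathbb{D}_\tau$, which is exactly how $\mathbb{D}_\tau$ is factored out into the $320\mathbb{D}_\tau(\cdots)$ term. The fluctuation $\|\xi_t^u\|^2 - \E_t[\|\xi_t^u\|^2]$, weighted by $2t\eta_t \leq 8/\mu \leq 8\mathbb{D}_\tau/(\mu C)$ via $\mathbb{D}_\tau \geq C$, becomes the second martingale-difference term in the claim.

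For the unbiased inner product $\sum_{t=1}^\tau t\langle \xi_t^u, x_*-x_t\rangle$, I rewrite each summand as $t\mathbb{D}_t\langle \xi_t^u, (x_*-x_t)/\mathbb{D}_t\rangle$ and invoke summation by parts. Since $\mathbb{D}_t$ is non-decreasing with total variation $\mathbb{D}_\tau - \mathbb{D}_1 \leq \mathbb{D}_\tau$, Abel's identity recasts the sum as $\mathbb{D}_\tau \sum_{t=1}^\tau t\langle \xi_t^u, (x_*-x_t)/\mathbb{D}_t\rangle$ plus an Abel remainder whose partial-sum factors are weighted by the non-negative increments $\mathbb{D}_{t+1} - \mathbb{D}_t$; those increments telescope to at most $\mathbb{D}_\tau$, so the remainder too is absorbed into $320\mathbb{D}_\tau(\cdots)$.

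\textbf{Main obstacle.} The hardest step is this non-telescoping swap of $\mathbb{D}_t$ for $\mathbb{D}_\tau$ inside the signed martingale inner product: the naive replacement $\mathbb{D}_t \leq \mathbb{D}_\tau$ is unavailable because it requires non-negative summands, so summation by parts together with the monotonicity of the envelope $\mathbb{D}_t$ is essential. A secondary delicacy is the $t = 1$ boundary, where $\sqrt{t(t-1)} = 0$ makes the bound $d_t \leq \mathbb{D}_t/\sqrt{t(t-1)}$ vacuous for the bias term; this is resolved by the $\mathbb{D}_\tau \geq C \geq M_T/\mu$ lower bound baked into the definition of $C$, letting the single $t=1$ contribution be subsumed by the explicit $320\mathbb{D}_\tau(\cdots)$ constant.
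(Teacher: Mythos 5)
Your overall structure—splitting $\xi_t = \xi_t^u + \xi_t^b$, applying Young's inequality to the bias inner product, and using Lemma \ref{lem:err-bound} to bound the second-moment quantities—matches the paper's. The gap is in how you introduce the $\mathbb{D}_\tau$ factor, and it is fatal for the two signed terms.

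The paper multiplies the one-step inequality by $t/\mathbb{D}_t$ (not just $t$) before summing. This has two consequences you do not reproduce. First, the inner product becomes $t\langle\xi_t^u, (x_*-x_t)/\mathbb{D}_t\rangle$ directly at each step, so no re-normalization is needed afterwards. Second, the quadratic RHS terms become $\tfrac{8\|\xi_t^u\|^2+\cdots}{\mu\mathbb{D}_t}$; since the numerator is non-negative and $\mathbb{D}_t \ge C$, the paper first replaces $\mathbb{D}_t$ by $C$ in the denominator and only \emph{then} decomposes $\|\xi_t^u\|^2$ into fluctuation plus conditional mean. The fluctuation thus carries the uniform coefficient $8/(\mu C)$ throughout, and the final $\mathbb{D}_\tau$ factor is produced by lower-bounding the LHS: since $\Delta_t \ge 0$ and $\mathbb{D}_t \le \mathbb{D}_\tau$, one has $t\Delta_t/\mathbb{D}_t \ge t\Delta_t/\mathbb{D}_\tau$, and multiplying through by the positive scalar $\mathbb{D}_\tau$ closes the argument.

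Your proposal instead multiplies by $t$ only and tries to recover the $\mathbb{D}_\tau$ factor afterwards. For the fluctuation you write $2t\eta_t \le 8/\mu \le 8\mathbb{D}_\tau/(\mu C)$, but $\|\xi_t^u\|^2 - \E_t[\|\xi_t^u\|^2]$ is signed, so enlarging its coefficient does \emph{not} yield an upper bound whenever the fluctuation is negative. For the inner product you invoke Abel summation: $\sum_{t\le\tau}\mathbb{D}_t Z_t = \mathbb{D}_\tau S_\tau - \sum_{t<\tau}(\mathbb{D}_{t+1}-\mathbb{D}_t)S_t$ where $Z_t = t\langle\xi_t^u,(x_*-x_t)/\mathbb{D}_t\rangle$ and $S_t = \sum_{s\le t} Z_s$. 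You claim the remainder is "absorbed" because the increments telescope to at most $\mathbb{D}_\tau$, but the partial sums $S_t$ are signed random quantities with no deterministic bound; since Lemma \ref{lem:basic-str-prob} is a pathwise inequality, you cannot discharge $-\sum_{t<\tau}(\mathbb{D}_{t+1}-\mathbb{D}_t)S_t$ into the deterministic $320\mathbb{D}_\tau(\cdots)$ term. (Controlling $\max_t|S_t|$ is exactly the job of Lemma \ref{lem:inner-str} via Freedman, which lives one level up, not inside this lemma.) Both defects disappear once you divide per-step by $\mathbb{D}_t$.
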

\begin{proof}
We first invoke Lemma \ref{lem:basic} to get
\begin{align}
\Delta_{t}+\frac{\eta_{t}^{-1}}{2}d_{t+1}^{2}-\frac{\eta_{t}^{-1}-\mu}{2}d_{t}^{2}\leq & \langle\xi_{t},x_{*}-x_{t}\rangle+\eta_{t}\left(2\left\Vert \xi_{t}^{u}\right\Vert ^{2}+2\left\Vert \xi_{t}^{b}\right\Vert ^{2}+G^{2}\right)\nonumber \\
= & \langle\xi_{t}^{b},x_{*}-x_{t}\rangle+\langle\xi_{t}^{u},x_{*}-x_{t}\rangle+\eta_{t}\left(2\left\Vert \xi_{t}^{u}\right\Vert ^{2}+2\left\Vert \xi_{t}^{b}\right\Vert ^{2}+G^{2}\right)\nonumber \\
\overset{(a)}{\leq} & \frac{\left\Vert \xi_{t}^{b}\right\Vert ^{2}}{\mu}+\frac{\mu d_{t}^{2}}{4}+\langle\xi_{t}^{u},x_{*}-x_{t}\rangle+\eta_{t}\left(2\left\Vert \xi_{t}^{u}\right\Vert ^{2}+2\left\Vert \xi_{t}^{b}\right\Vert ^{2}+G^{2}\right)\nonumber \\
\Rightarrow\Delta_{t}+\frac{\eta_{t}^{-1}}{2}d_{t+1}^{2}-\frac{\eta_{t}^{-1}-\mu/2}{2}d_{t}^{2}\leq & \langle\xi_{t}^{u},x_{*}-x_{t}\rangle+\frac{\left\Vert \xi_{t}^{b}\right\Vert ^{2}}{\mu}+\eta_{t}\left(2\left\Vert \xi_{t}^{u}\right\Vert ^{2}+2\left\Vert \xi_{t}^{b}\right\Vert ^{2}+G^{2}\right)\label{eq:str-1}
\end{align}
where $(a)$ is by $\langle\xi_{t}^{b},x_{*}-x_{t}\rangle\leq\left\Vert \xi_{t}^{b}\right\Vert ^{2}/\mu+\mu\left\Vert x_{t}-x_{*}\right\Vert ^{2}/4=\left\Vert \xi_{t}^{b}\right\Vert ^{2}/\mu+\mu d_{t}^{2}/4$.
Now, plugging $\eta_{t}=\frac{4}{\mu(t+1)}$ into (\ref{eq:str-1})
and multiplying both sides by $t/\mathbb{D}_{t}$ to obtain
\begin{align}
\frac{t\Delta_{t}}{\mathbb{D}_{t}}+\frac{\mu(t+1)t}{8\mathbb{D}_{t}}d_{t+1}^{2}-\frac{\mu t(t-1)}{8\mathbb{D}_{t}}d_{t}^{2}\leq & t\left\langle \xi_{t}^{u},\frac{x_{*}-x_{t}}{\mathbb{D}_{t}}\right\rangle +\frac{8\left(\left\Vert \xi_{t}^{u}\right\Vert ^{2}+\left\Vert \xi_{t}^{b}\right\Vert ^{2}\right)+t\left\Vert \xi_{t}^{b}\right\Vert ^{2}+4G^{2}}{\mu\mathbb{D}_{t}}\nonumber \\
\leq & t\left\langle \xi_{t}^{u},\frac{x_{*}-x_{t}}{\mathbb{D}_{t}}\right\rangle +\frac{8\left(\left\Vert \xi_{t}^{u}\right\Vert ^{2}+\left\Vert \xi_{t}^{b}\right\Vert ^{2}\right)+t\left\Vert \xi_{t}^{b}\right\Vert ^{2}+4G^{2}}{\mu C}\nonumber \\
= & t\left\langle \xi_{t}^{u},\frac{x_{*}-x_{t}}{\mathbb{D}_{t}}\right\rangle +\frac{8\left(\left\Vert \xi_{t}^{u}\right\Vert ^{2}-\E_{t}\left[\left\Vert \xi_{t}^{u}\right\Vert ^{2}\right]\right)}{\mu C}\nonumber \\
 & +\frac{8\left(\E_{t}\left[\left\Vert \xi_{t}^{u}\right\Vert ^{2}\right]+\left\Vert \xi_{t}^{b}\right\Vert ^{2}\right)+t\left\Vert \xi_{t}^{b}\right\Vert ^{2}+4G^{2}}{\mu C}.\label{eq:str-2}
\end{align}

Next, we bound $\E_{t}[\|\xi_{t}^{u}\|^{2}]\le10\sigma^{p}M_{t}^{2-p}$,
$\|\xi_{t}^{b}\|^{2}\leq10\sigma^{p}M_{t}^{2-p}$ and $\|\xi_{t}^{b}\|\le2\sigma^{p}M_{t}^{1-p}$
by using Lemma \ref{lem:err-bound} to get
\begin{align}
 & \frac{8\left(\E_{t}\left[\left\Vert \xi_{t}^{u}\right\Vert ^{2}\right]+\left\Vert \xi_{t}^{b}\right\Vert ^{2}\right)+t\left\Vert \xi_{t}^{b}\right\Vert ^{2}+4G^{2}}{\mu C}\nonumber \\
\leq & \frac{160\sigma^{p}M_{t}^{2-p}+4t\sigma^{2p}M_{t}^{2-2p}+4G^{2}}{\mu C}\nonumber \\
= & \frac{160\sigma^{p}(2G\lor Mt^{\frac{1}{p}})^{2-p}+4t\sigma^{2p}(2G\lor Mt^{\frac{1}{p}})^{2-2p}+4G^{2}}{\mu C}\nonumber \\
\leq & \frac{\left(160\sigma^{p}M^{2-p}+4\sigma^{2p}M^{2-2p}\right)t^{\frac{2}{p}-1}+320\sigma^{p}G^{2-p}+4G^{2}}{\mu C}\nonumber \\
\leq & 160M\sqrt{(\sigma/M)^{p}+(\sigma/M)^{2p}}\cdot\frac{t^{\frac{2}{p}-1}}{T^{\frac{1}{p}}}+320G\sqrt{1+(\sigma/G)^{p}}\cdot\frac{1}{\sqrt{T}}\label{eq:str-3}
\end{align}

Combining (\ref{eq:str-2}) and (\ref{eq:str-3}) and summing up from
$t=1$ to $\tau$, we have
\begin{align*}
 & \frac{\mu(\tau+1)\tau}{8\mathbb{D}_{\tau}}d_{\tau+1}^{2}+\sum_{t=2}^{\tau}\left(\frac{1}{\mathbb{D}_{t-1}}-\frac{1}{\mathbb{D}_{t}}\right)\frac{\mu t(t-1)}{8}d_{t}^{2}+\sum_{t=1}^{\tau}\frac{t\Delta_{t}}{\mathbb{D}_{t}}\\
\leq & \sum_{t=1}^{\tau}t\left\langle \xi_{t}^{u},\frac{x_{*}-x_{t}}{\mathbb{D}_{t}}\right\rangle +\frac{8\left(\left\Vert \xi_{t}^{u}\right\Vert ^{2}-\E_{t}\left[\left\Vert \xi_{t}^{u}\right\Vert ^{2}\right]\right)}{\mu C}\\
 & +\sum_{t=1}^{\tau}160M\sqrt{(\sigma/M)^{p}+(\sigma/M)^{2p}}\cdot\frac{t^{\frac{2}{p}-1}}{T^{\frac{1}{p}}}+320G\sqrt{1+(\sigma/G)^{p}}\cdot\frac{1}{\sqrt{T}}\\
\leq & \sum_{t=1}^{\tau}t\left\langle \xi_{t}^{u},\frac{x_{*}-x_{t}}{\mathbb{D}_{t}}\right\rangle +\frac{8\left(\left\Vert \xi_{t}^{u}\right\Vert ^{2}-\E_{t}\left[\left\Vert \xi_{t}^{u}\right\Vert ^{2}\right]\right)}{\mu C}\\
 & +160M\sqrt{(\sigma/M)^{p}+(\sigma/M)^{2p}}\frac{\frac{p}{2}\cdot((\tau+1)^{\frac{2}{p}}-1)}{T^{\frac{1}{p}}}+320G\sqrt{1+(\sigma/G)^{p}}\frac{\tau}{\sqrt{T}}\\
\leq & \sum_{t=1}^{\tau}t\left\langle \xi_{t}^{u},\frac{x_{*}-x_{t}}{\mathbb{D}_{t}}\right\rangle +\frac{8\left(\left\Vert \xi_{t}^{u}\right\Vert ^{2}-\E_{t}\left[\left\Vert \xi_{t}^{u}\right\Vert ^{2}\right]\right)}{\mu C}\\
 & +320M\sqrt{(\sigma/M)^{p}+(\sigma/M)^{2p}}T^{\frac{1}{p}}+320G\sqrt{1+(\sigma/G)^{p}}\sqrt{T}.
\end{align*}
Finally, we use $\frac{1}{\mathbb{D}_{t-1}}-\frac{1}{\mathbb{D}_{t}}\geq0$
and $\mathbb{\mathbb{D}_{\tau}\geq}\mathbb{D}_{t},\forall t\leq\tau$
to finish the proof.
\end{proof}

The same as the case of $\mu=0$. Our goal is to find a high-probability
bound of $\sum_{t=1}^{\tau}t\left\langle \xi_{t}^{u},\frac{x_{*}-x_{t}}{\mathbb{D}_{t}}\right\rangle $
and $\sum_{t=1}^{\tau}\frac{8}{\mu C}(\|\xi_{t}^{u}\|^{2}-\E_{t}[\|\xi_{t}^{u}\|^{2}])$.
Note that both of them are martingale difference sequences, hence,
we can use Freedman's inequality again. The formal results are presented
in the following Lemmas \ref{lem:inner-str} and \ref{lem:xi-u-str}.
\begin{lem}
\label{lem:inner-str}When $\mu>0$, under the choice of $M_{t}=2G\lor Mt^{\frac{1}{p}}$,
we have with probability at least $1-\frac{\delta}{2}$, for any $\tau\in\left[T\right]$,
\[
\sum_{t=1}^{\tau}t\left\langle \xi_{t}^{u},\frac{x_{*}-x_{t}}{\mathbb{D}_{t}}\right\rangle \leq3G\log\frac{4}{\delta}+13\left(M\log\frac{4}{\delta}+\sqrt{\left(\sigma^{p}G^{2-p}+\sigma^{p}M^{2-p}\right)\log\frac{4}{\delta}}\right)T^{\frac{1}{p}}.
\]
\end{lem}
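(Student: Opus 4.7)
The plan is to realize the sum as a martingale and invoke the uniform Freedman bound (Corollary \ref{cor:ez-any-freedman-1}), exactly mirroring the strategy used in Lemma \ref{lem:lip-prob-concen}. Set $Z_t \coloneqq t\langle \xi_t^u, (x_* - x_t)/\mathbb{D}_t\rangle$. Since $\mathbb{D}_t \in \F_{t-1}$ and $\E_t[\xi_t^u] = 0$, the sequence $\{Z_t\}$ is a martingale difference sequence adapted to $\F_t$, so the task reduces to producing an almost-sure bound $R$ on $|Z_t|$ and a uniform bound $F$ on $\sum_{t=1}^T \E_t[Z_t^2]$.

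The key observation driving the almost-sure bound is that the normalization $\mathbb{D}_t$ was engineered precisely so that $t d_t/\mathbb{D}_t$ stays bounded. For $t \geq 2$ the definition gives $\mathbb{D}_t \geq \sqrt{t(t-1)}\, d_t$, hence $t d_t/\mathbb{D}_t \leq \sqrt{t/(t-1)} \leq \sqrt{2}$, which combined with $\|\xi_t^u\| \leq 2M_t$ from Lemma \ref{lem:err-bound} yields $|Z_t| \leq 2\sqrt{2}\, M_t \leq 2\sqrt{2}(2G + M T^{1/p})$. The exceptional case $t=1$ is where the only subtlety appears: $\sqrt{t(t-1)}\, d_t$ vanishes, so one must exploit $\mathbb{D}_1 = C \geq M_T/\mu \geq 2G/\mu$ together with Fact \ref{fact:fact}, which under $\mu>0$ gives $d_1 \leq G/\mu$. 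Then $d_1/\mathbb{D}_1 \leq 1/2$ and $|Z_1| \leq M_1 \leq 2G + M$, so the same uniform bound $R = O(G + MT^{1/p})$ holds. For the conditional second moment, by Lemma \ref{lem:err-bound} we have $\E_t[\|\xi_t^u\|^2] \leq 10\sigma^p M_t^{2-p}$ and the same manipulations give $\E_t[Z_t^2] \leq 20\sigma^p M_t^{2-p}$ for $t\geq 2$ (and a $\sigma^p M_1^{2-p}$ bound for $t=1$), which I then sum using $M_t^{2-p} \leq (2G)^{2-p} + M^{2-p} t^{(2-p)/p}$ and $\sum_{t=1}^T t^{(2-p)/p} \leq T^{2/p}$ to obtain $F = O\bigl(\sigma^p(G^{2-p} + M^{2-p}) T^{2/p}\bigr)$.

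Plugging $R$ and $F$ into Corollary \ref{cor:ez-any-freedman-1} with the choice $a = (2R/3)\log(4/\delta) + \sqrt{2F\log(4/\delta)}$, the $(2R/3)\log(4/\delta)$ term produces the $G\log(4/\delta) + M T^{1/p}\log(4/\delta)$ contributions while $\sqrt{2F\log(4/\delta)} = T^{1/p}\sqrt{O(\sigma^p G^{2-p} + \sigma^p M^{2-p})\log(4/\delta)}$ produces the final square-root term, matching the stated inequality up to the advertised absolute constants $3$ and $13$.

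I expect the main obstacle to be nothing beyond the careful handling of the $t=1$ boundary case, since the variance and second-moment computations otherwise follow the same template as Lemma \ref{lem:lip-prob-concen}. Invoking Fact \ref{fact:fact} (which requires $\mu>0$ and hence is available here) is what closes that gap and enables a single uniform bound $R$ that applies to every $t \in [T]$.
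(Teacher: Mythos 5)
Your proposal matches the paper's proof essentially exactly: define $Z_t = t\langle\xi_t^u,(x_*-x_t)/\mathbb{D}_t\rangle$, show it is a bounded martingale difference via $td_t/\mathbb{D}_t = O(1)$ (with the $t\geq 2$ case handled by the $\sqrt{t(t-1)}d_t$ component of $\mathbb{D}_t$ and the $t=1$ case by $d_1 \leq G/\mu$ from Fact \ref{fact:fact} together with $C\geq G/\mu$), bound the conditional variance with $\E_t[\|\xi_t^u\|^2]\leq 10\sigma^p M_t^{2-p}$, and apply Corollary \ref{cor:ez-any-freedman-1}. The only cosmetic difference is which component of $C$ you invoke in the $t=1$ edge case ($M_T/\mu$ rather than $G\sqrt{1+(\sigma/G)^p}\sqrt{T}/\mu$); both give the same conclusion.
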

\begin{proof}
We first note that $Z_{t}\coloneqq t\left\langle \xi_{t}^{u},\frac{x_{*}-x_{t}}{\mathbb{D}_{t}}\right\rangle \in\F_{t}$
is a martingale difference sequence. Next, observe that
\[
\frac{td_{t}}{\mathbb{D}_{t}}\leq\frac{td_{t}}{\sqrt{t(t-1)}d_{t}\lor\frac{G\sqrt{1+(\sigma/G)^{p}}}{\mu}\sqrt{T}}\leq\begin{cases}
\sqrt{\frac{t}{t-1}}\leq2 & t\geq2\\
\frac{d_{1}}{\frac{G}{\mu}}\leq2 & t=1
\end{cases}.
\]
Hence, we know
\[
\left|Z_{t}\right|\leq\left\Vert \xi_{t}^{u}\right\Vert \frac{td_{t}}{\mathbb{D}_{t}}\leq2M_{t}\leq2M_{T}=4G\lor2MT^{\frac{1}{p}}.
\]
Besides, we know
\begin{align*}
\sum_{t=1}^{\tau}\E_{t}\left[Z_{t}^{2}\right]\leq & \sum_{t=1}^{\tau}\E_{t}\left[\left\Vert \xi_{t}^{u}\right\Vert ^{2}\frac{t^{2}d_{t}^{2}}{\mathbb{D}_{t}^{2}}\right]\leq\sum_{t=1}^{\tau}10\sigma^{p}M_{t}^{2-p}\cdot4\\
= & \sum_{t=1}^{\tau}40\sigma^{p}M_{t}^{2-p}=\sum_{t=1}^{\tau}40\sigma^{p}\cdot\left(2G\lor Mt^{\frac{1}{p}}\right)^{2-p}\\
\leq & \sum_{t=1}^{\tau}80\sigma^{p}G^{2-p}+40\sigma^{p}M^{2-p}t^{\frac{2}{p}-1}\\
\leq & 80\sigma^{p}G^{2-p}\tau+40\sigma^{p}M^{2-p}\cdot\frac{p}{2}\left((\tau+1)^{\frac{2}{p}}-1\right)\\
\leq & 80\sigma^{p}G^{2-p}T+80\sigma^{p}M^{2-p}T^{\frac{2}{p}}\\
\leq & 80\left(\sigma^{p}G^{2-p}+\sigma^{p}M^{2-p}\right)T^{\frac{2}{p}}.
\end{align*}
Let $R=4G\lor2MT^{\frac{1}{p}}$, $F=80\left(\sigma^{p}G^{2-p}+\sigma^{p}M^{2-p}\right)T^{\frac{2}{p}}$.
By Freedman's inequality (Corollary \ref{cor:ez-any-freedman-1}),
with probability at least $1-\frac{\delta}{2}$, we have
\begin{align*}
\sum_{t=1}^{\tau}Z_{t}\leq & \frac{2R}{3}\log\frac{4}{\delta}+\sqrt{2F\log\frac{4}{\delta}}\\
= & \frac{8G\lor4MT^{\frac{1}{p}}}{3}\log\frac{4}{\delta}+\sqrt{160\left(\sigma^{p}G^{2-p}+\sigma^{p}M^{2-p}\right)\log\frac{4}{\delta}}T^{\frac{1}{p}}\\
\leq & 3G\log\frac{4}{\delta}+13\left(M\log\frac{4}{\delta}+\sqrt{\left(\sigma^{p}G^{2-p}+\sigma^{p}M^{2-p}\right)\log\frac{4}{\delta}}\right)T^{\frac{1}{p}}.
\end{align*}
\end{proof}

\begin{lem}
\label{lem:xi-u-str}When $\mu>0$, under the choice of $M_{t}=2G\lor Mt^{\frac{1}{p}}$,
we have with probability at least $1-\frac{\delta}{2}$, for any $\tau\in\left[T\right]$,
\[
\sum_{t=1}^{\tau}\frac{8\left(\left\Vert \xi_{t}^{u}\right\Vert ^{2}-\E_{t}\left[\left\Vert \xi_{t}^{u}\right\Vert ^{2}\right]\right)}{\mu C}\leq88G\log\frac{4}{\delta}+104\left(M\log\frac{4}{\delta}+\sqrt{\left(\sigma^{p}G^{2-p}+\sigma^{p}M^{2-p}\right)\log\frac{4}{\delta}}\right)T^{\frac{1}{p}}.
\]
\end{lem}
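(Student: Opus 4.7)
The proof will mirror closely the argument used for Lemma~\ref{lem:xi-u-lip} and Lemma~\ref{lem:inner-str}: identify a bounded martingale difference sequence with controlled conditional variance, then apply Freedman's inequality in the form of Corollary~\ref{cor:ez-any-freedman-1}. Set
\[
Z_t \coloneqq \frac{8\bigl(\|\xi_t^u\|^2 - \E_t[\|\xi_t^u\|^2]\bigr)}{\mu C},
\]
which is $\F_t$-measurable with $\E_t[Z_t] = 0$, hence a martingale difference sequence. The bound $\mu C \geq M_T$ built into the definition of $C$ in Lemma~\ref{lem:basic-str-prob} is exactly what we will use to turn pointwise and variance bounds on $\|\xi_t^u\|$ into dimension-free expressions in $G$ and $M$.

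For the almost-sure bound on $|Z_t|$, I would use $|A-B|\le A+B$ for nonnegative $A,B$ together with Lemma~\ref{lem:err-bound}, which yields $\|\xi_t^u\|^2 \leq 4M_t^2$ and hence $|\|\xi_t^u\|^2 - \E_t[\|\xi_t^u\|^2]| \leq 8M_t^2$. Then
\[
|Z_t| \leq \frac{64 M_t^2}{\mu C} \leq \frac{64 M_t^2}{M_T} \leq 64 M_t \leq 64 M_T \leq 128G + 64 M T^{\frac{1}{p}},
\]
so we may take $R = 128G + 64 M T^{\frac{1}{p}}$. For the conditional variance, use $\mathrm{Var} \leq \E_t[(\cdot)^2]$ and the elementary estimate $\E_t[\|\xi_t^u\|^4] \leq 4 M_t^2 \cdot \E_t[\|\xi_t^u\|^2] \leq 40 \sigma^p M_t^{4-p}$ (combining $\|\xi_t^u\|\le 2M_t$ with $\E_t[\|\xi_t^u\|^2]\le 10\sigma^p M_t^{2-p}$ from Lemma~\ref{lem:err-bound}). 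Using $\mu^2 C^2 \geq M_T^2$ and $M_t \leq M_T$ then collapses the $M_t^{4-p}/(\mu^2C^2)$ factor into $M_T^{2-p}$:
\[
\E_t[Z_t^2] \leq \frac{64 \cdot 40\, \sigma^p M_t^{4-p}}{\mu^2 C^2} \leq 2560\,\sigma^p M_T^{2-p}.
\]

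Bounding $M_T^{2-p} = (2G \lor MT^{\frac{1}{p}})^{2-p} \leq 2G^{2-p} + M^{2-p} T^{\frac{2-p}{p}}$ and summing $t = 1, \ldots, T$ gives
\[
\sum_{t=1}^T \E_t[Z_t^2] \leq 5120\,\sigma^p G^{2-p} T + 2560\,\sigma^p M^{2-p} T^{\frac{2}{p}} \leq 5120\bigl(\sigma^p G^{2-p} + \sigma^p M^{2-p}\bigr) T^{\frac{2}{p}},
\]
where I use $T \leq T^{\frac{2}{p}}$ for $p\in(1,2]$. Taking this as $F$, Corollary~\ref{cor:ez-any-freedman-1} yields with probability at least $1 - \delta/2$ that $\sum_{t=1}^\tau Z_t \leq \tfrac{2R}{3}\log\tfrac{4}{\delta} + \sqrt{2F\log\tfrac{4}{\delta}}$ uniformly in $\tau \in [T]$. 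Plugging in and bounding $\sqrt{10240} < 104$ and $\tfrac{256}{3} < 88$, $\tfrac{128}{3} < 104$ absorbs all the constants into the clean form stated in the lemma. The whole argument is routine bookkeeping; the only delicate point is seeing that the definition of $C$ is engineered precisely so that $\mu C \geq M_T$, letting both $R$ and $F$ depend only on $G$, $M$, $\sigma$, $T$ and not on $\mu$ or $C$ explicitly.
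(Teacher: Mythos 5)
Your proof is correct and takes essentially the same approach as the paper's: define the same martingale difference sequence (you just carry the factor $8/\mu C$ along from the start, whereas the paper multiplies through by $8$ at the end), bound it almost surely and in conditional variance via Lemma \ref{lem:err-bound} and the engineered inequality $\mu C \geq M_T$, and apply Freedman via Corollary \ref{cor:ez-any-freedman-1}. Your intermediate estimates are marginally looser (e.g., using $M_t \leq M_T$ before summing, and converting $\lor$ to $+$ in $R$) than the paper's, but the slack in the final constants $88$ and $104$ absorbs this.
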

\begin{proof}
We first note that $\frac{\left\Vert \xi_{t}^{u}\right\Vert ^{2}-\E_{t}\left[\left\Vert \xi_{t}^{u}\right\Vert ^{2}\right]}{\mu C}\in\F_{t}$
is a martingale difference sequence. Next, observe that
\[
\frac{\left|\left\Vert \xi_{t}^{u}\right\Vert ^{2}-\E_{t}\left[\left\Vert \xi_{t}^{u}\right\Vert ^{2}\right]\right|}{\mu C}\leq\frac{\left\Vert \xi_{t}^{u}\right\Vert ^{2}+\E_{t}\left[\left\Vert \xi_{t}^{u}\right\Vert ^{2}\right]}{\mu C}\leq\frac{8M_{t}^{2}}{\mu C}\leq\frac{8M_{t}^{2}}{M_{T}}\leq8M_{T}=16G\lor8MT^{\frac{1}{p}}.
\]
Besides, we know
\begin{align*}
\sum_{t=1}^{\tau}\E_{t}\left[\frac{\left(\left\Vert \xi_{t}^{u}\right\Vert ^{2}-\E_{t}\left[\left\Vert \xi_{t}^{u}\right\Vert ^{2}\right]\right)^{2}}{\mu^{2}C^{2}}\right]\leq & \sum_{t=1}^{\tau}\frac{\E_{t}\left[\left\Vert \xi_{t}^{u}\right\Vert ^{4}\right]}{\mu^{2}C^{2}}\leq\sum_{t=1}^{\tau}\frac{4M_{t}^{2}\cdot10\sigma^{p}M_{t}^{2-p}}{M_{T}^{2}}\\
= & \sum_{t=1}^{\tau}40\sigma^{p}M_{t}^{2-p}=\sum_{t=1}^{\tau}40\sigma^{p}\cdot\left(2G\lor Mt^{\frac{1}{p}}\right)^{2-p}\\
\leq & 80\left(\sigma^{p}G^{2-p}+\sigma^{p}M^{2-p}\right)T^{\frac{2}{p}}.
\end{align*}
Let $R=16G\lor8MT^{\frac{1}{p}}$, $F=80\left(\sigma^{p}G^{2-p}+\sigma^{p}M^{2-p}\right)T^{\frac{2}{p}}$.
By Freedman's inequality (Corollary \ref{cor:ez-any-freedman-1}),
with probability at least $1-\frac{\delta}{2}$, we have
\begin{align*}
\sum_{t=1}^{\tau}\frac{\left\Vert \xi_{t}^{u}\right\Vert ^{2}-\E_{t}\left[\left\Vert \xi_{t}^{u}\right\Vert ^{2}\right]}{\mu C}\leq & \frac{2R}{3}\log\frac{4}{\delta}+\sqrt{2F\log\frac{4}{\delta}}\\
= & \frac{32G\lor16MT^{\frac{1}{p}}}{3}\log\frac{4}{\delta}+\sqrt{160\left(\sigma^{p}G^{2-p}+\sigma^{p}M^{2-p}\right)\log\frac{4}{\delta}}T^{\frac{1}{p}}\\
= & 11G\log\frac{4}{\delta}+13\left(M\log\frac{4}{\delta}+\sqrt{\left(\sigma^{p}G^{2-p}+\sigma^{p}M^{2-p}\right)\log\frac{4}{\delta}}\right)T^{\frac{1}{p}}\\
\Rightarrow\sum_{t=1}^{\tau}\frac{8\left(\left\Vert \xi_{t}^{u}\right\Vert ^{2}-\E_{t}\left[\left\Vert \xi_{t}^{u}\right\Vert ^{2}\right]\right)}{\mu C}\leq & 88G\log\frac{4}{\delta}+104\left(M\log\frac{4}{\delta}+\sqrt{\left(\sigma^{p}G^{2-p}+\sigma^{p}M^{2-p}\right)\log\frac{4}{\delta}}\right)T^{\frac{1}{p}}.
\end{align*}
\end{proof}

With the above lemmas, we are able to prove Theorem \ref{thm:str-prob}.

\begin{proof}[Proof of Theorem \ref{thm:str-prob}]
We first define a constant $K$ as follows
\begin{align}
K\coloneqq & \frac{\mu C^{2}}{8}+\frac{16}{\mu}\left[91G\log\frac{4}{\delta}+117\left(M\log\frac{4}{\delta}+\sqrt{\left(\sigma^{p}G^{2-p}+\sigma^{p}M^{2-p}\right)\log\frac{4}{\delta}}\right)T^{\frac{1}{p}}\right]^{2}\nonumber \\
 & +\frac{16}{\mu}\left[320\left(M\sqrt{(\sigma/M)^{p}+(\sigma/M)^{2p}}T^{\frac{1}{p}}+G\sqrt{1+(\sigma/G)^{p}}\sqrt{T}\right)\right]^{2}\label{eq:str-prob-def-k}\\
= & O\left(\log^{2}(1/\delta)\left(\frac{G^{2}+\sigma^{2}}{\mu}T+\frac{M^{2}+\sigma^{2p}M^{2-2p}+\sigma^{p}G^{2-p}}{\mu}T^{\frac{2}{p}}\right)\right)\nonumber 
\end{align}
where $C=\frac{G\sqrt{1+(\sigma/G)^{p}}}{\mu}\sqrt{T}\lor\frac{M\sqrt{(\sigma/M)^{p}+(\sigma/M)^{2p}}}{\mu}T^{\frac{1}{p}}\lor\frac{M_{T}}{\mu}$
is defined in Lemma \ref{lem:basic-str-prob}.

We sart with Lemma \ref{lem:basic-str-prob} to get for any $\tau\in\left[T\right]$,
there is
\begin{align}
\frac{\mu(\tau+1)\tau}{8}d_{\tau+1}^{2}+\sum_{t=1}^{\tau}t\Delta_{t}\leq & \mathbb{D}_{\tau}\left(\sum_{t=1}^{\tau}t\left\langle \xi_{t}^{u},\frac{x_{*}-x_{t}}{\mathbb{D}_{t}}\right\rangle +\frac{8\left(\left\Vert \xi_{t}^{u}\right\Vert ^{2}-\E_{t}\left[\left\Vert \xi_{t}^{u}\right\Vert ^{2}\right]\right)}{\mu C}\right)\nonumber \\
 & +320\mathbb{D}_{\tau}\left(M\sqrt{(\sigma/M)^{p}+(\sigma/M)^{2p}}T^{\frac{1}{p}}+G\sqrt{1+(\sigma/G)^{p}}\sqrt{T}\right).\label{eq:str-prob}
\end{align}
\begin{itemize}
\item Bounding the term $\sum_{t=1}^{\tau}t\left\langle \xi_{t}^{u},\frac{x_{*}-x_{t}}{\mathbb{D}_{t}}\right\rangle $:
By Lemma \ref{lem:inner-str}, we have with probability at least $1-\frac{\delta}{2}$,
for any $\tau\in\left[T\right]$:
\begin{equation}
\sum_{t=1}^{\tau}t\left\langle \xi_{t}^{u},\frac{x_{*}-x_{t}}{\mathbb{D}_{t}}\right\rangle \leq3G\log\frac{4}{\delta}+13\left(M\log\frac{4}{\delta}+\sqrt{\left(\sigma^{p}G^{2-p}+\sigma^{p}M^{2-p}\right)\log\frac{4}{\delta}}\right)T^{\frac{1}{p}}.\label{eq:str-prob-inner}
\end{equation}
\item Bounding the term $\sum_{t=1}^{\tau}\frac{8\left(\left\Vert \xi_{t}^{u}\right\Vert ^{2}-\E_{t}\left[\left\Vert \xi_{t}^{u}\right\Vert ^{2}\right]\right)}{\mu C}$:
By Lemma \ref{lem:xi-u-str}, we have with probability at least $1-\frac{\delta}{2}$,
for any $\tau\in\left[T\right]$:
\begin{equation}
\sum_{t=1}^{\tau}\frac{8\left(\left\Vert \xi_{t}^{u}\right\Vert ^{2}-\E_{t}\left[\left\Vert \xi_{t}^{u}\right\Vert ^{2}\right]\right)}{\mu C}\leq88G\log\frac{4}{\delta}+104\left(M\log\frac{4}{\delta}+\sqrt{\left(\sigma^{p}G^{2-p}+\sigma^{p}M^{2-p}\right)\log\frac{4}{\delta}}\right)T^{\frac{1}{p}}.\label{eq:str-prob-xi-u-2}
\end{equation}
\end{itemize}
Combining (\ref{eq:str-prob}), (\ref{eq:str-prob-inner}), (\ref{eq:str-prob-xi-u-2}),
we have with probability at least $1-\delta$, for any $\tau\in\left[T\right]$:
\begin{align*}
\frac{\mu(\tau+1)\tau}{8}d_{\tau+1}^{2}+\sum_{t=1}^{\tau}t\Delta_{t}\leq & \mathbb{D}_{\tau}\left[91G\log\frac{4}{\delta}+117\left(M\log\frac{4}{\delta}+\sqrt{\left(\sigma^{p}G^{2-p}+\sigma^{p}M^{2-p}\right)\log\frac{4}{\delta}}\right)T^{\frac{1}{p}}\right]\\
 & +320\mathbb{D}_{\tau}\left(M\sqrt{(\sigma/M)^{p}+(\sigma/M)^{2p}}T^{\frac{1}{p}}+G\sqrt{1+(\sigma/G)^{p}}\sqrt{T}\right)\\
\leq & \frac{\mu\mathbb{D}_{\tau}^{2}}{16}+\frac{8}{\mu}\left[91G\log\frac{4}{\delta}+117\left(M\log\frac{4}{\delta}+\sqrt{\left(\sigma^{p}G^{2-p}+\sigma^{p}M^{2-p}\right)\log\frac{4}{\delta}}\right)T^{\frac{1}{p}}\right]^{2}\\
 & +\frac{8}{\mu}\left[320\left(M\sqrt{(\sigma/M)^{p}+(\sigma/M)^{2p}}T^{\frac{1}{p}}+G\sqrt{1+(\sigma/G)^{p}}\sqrt{T}\right)\right]^{2}\\
\overset{(a)}{\leq} & \frac{\mu\left(\max_{s\in\left[\tau\right]}s(s-1)d_{s}^{2}+C^{2}\right)}{16}\\
 & +\frac{8}{\mu}\left[91G\log\frac{4}{\delta}+117\left(M\log\frac{4}{\delta}+\sqrt{\left(\sigma^{p}G^{2-p}+\sigma^{p}M^{2-p}\right)\log\frac{4}{\delta}}\right)T^{\frac{1}{p}}\right]^{2}\\
 & +\frac{8}{\mu}\left[320\left(M\sqrt{(\sigma/M)^{p}+(\sigma/M)^{2p}}T^{\frac{1}{p}}+G\sqrt{1+(\sigma/G)^{p}}\sqrt{T}\right)\right]^{2}\\
\overset{(b)}{=} & \frac{\max_{s\in\left[\tau\right]}\frac{\mu s(s-1)}{8}d_{s}^{2}}{2}+\frac{K}{2}
\end{align*}
where $(a)$ is by $\mathbb{D}_{\tau}^{2}=(C\lor\max_{s\in\left[\tau\right]}\sqrt{s(s-1)}d_{s})^{2}\leq\max_{s\in\left[\tau\right]}s(s-1)d_{s}^{2}+C^{2}$;
$(b)$ is due to the definition of $K$ (see (\ref{eq:str-prob-def-k})).
Hence, by using $\Delta_{t}\geq0$, we have for any $\tau\in\left[T\right]$,
\[
\frac{\mu(\tau+1)\tau}{8}d_{\tau+1}^{2}\leq\frac{\max_{s\in\left[\tau\right]}\frac{\mu s(s-1)}{8}d_{s}^{2}}{2}+\frac{K}{2},
\]
which implies $\frac{\mu t(t-1)}{8}d_{t}^{2}\leq K$ for any $t\in\left[T+1\right]$
by simple induction.

Finally, we consider time $T$ to get with probability at least $1-\delta$
\[
\frac{\mu(T+1)T}{8}d_{T+1}^{2}+\sum_{t=1}^{\tau}t\Delta_{t}\leq K.
\]
Note that $F(\bar{x}_{T})-F(x_{*})\leq\frac{2}{T(T+1)}\sum_{t=1}^{T}t\Delta_{t}$
by the convexity of $F$ where $\bar{x}_{T}=\frac{2}{T(T+1)}\sum_{t=1}^{T}tx_{t}$,
we conclude that
\[
\frac{\mu(T+1)T}{8}d_{T+1}^{2}+F(\bar{x}_{T})-F(x_{*})\leq K.
\]
We get the desired result by plugging $K$.
\end{proof}

\subsection{In-Expectaion Analysis When $\mu>0$\label{subsec:app-str-exp}}

The proof of Theorem \ref{thm:str-exp} is inispired by \cite{zhang2020adaptive}.

\begin{proof}[Proof of Theorem \ref{thm:str-exp}]
We first invoke Lemma \ref{lem:basic} to get
\begin{align}
\Delta_{t}+\frac{\eta_{t}^{-1}}{2}d_{t+1}^{2}-\frac{\eta_{t}^{-1}-\mu}{2}d_{t}^{2}\leq & \langle\xi_{t},x_{*}-x_{t}\rangle+\eta_{t}\left(2\left\Vert \xi_{t}^{u}\right\Vert ^{2}+2\left\Vert \xi_{t}^{b}\right\Vert ^{2}+G^{2}\right)\nonumber \\
= & \langle\xi_{t}^{b},x_{*}-x_{t}\rangle+\langle\xi_{t}^{u},x_{*}-x_{t}\rangle+\eta_{t}\left(2\left\Vert \xi_{t}^{u}\right\Vert ^{2}+2\left\Vert \xi_{t}^{b}\right\Vert ^{2}+G^{2}\right)\nonumber \\
\overset{(a)}{\leq} & \frac{\left\Vert \xi_{t}^{b}\right\Vert ^{2}}{\mu}+\frac{\mu d_{t}^{2}}{4}+\langle\xi_{t}^{u},x_{*}-x_{t}\rangle+\eta_{t}\left(2\left\Vert \xi_{t}^{u}\right\Vert ^{2}+2\left\Vert \xi_{t}^{b}\right\Vert ^{2}+G^{2}\right)\nonumber \\
\Rightarrow\Delta_{t}+\frac{\eta_{t}^{-1}}{2}d_{t+1}^{2}-\frac{\eta_{t}^{-1}-\mu/2}{2}d_{t}^{2}\leq & \langle\xi_{t}^{u},x_{*}-x_{t}\rangle+\frac{\left\Vert \xi_{t}^{b}\right\Vert ^{2}}{\mu}+\eta_{t}\left(2\left\Vert \xi_{t}^{u}\right\Vert ^{2}+2\left\Vert \xi_{t}^{b}\right\Vert ^{2}+G^{2}\right)\label{eq:str-exp-1}
\end{align}
where $(a)$ is by $\langle\xi_{t}^{b},x_{*}-x_{t}\rangle\leq\left\Vert \xi_{t}^{b}\right\Vert ^{2}/\mu+\mu\left\Vert x_{t}-x_{*}\right\Vert ^{2}/4=\left\Vert \xi_{t}^{b}\right\Vert ^{2}/\mu+\mu d_{t}^{2}/4$.
Now, plugging $\eta_{t}=\frac{4}{\mu(t+1)}$ into (\ref{eq:str-exp-1})
and multiplying both sides by $t$ to obtain
\begin{align}
t\Delta_{t}+\frac{\mu(t+1)t}{8}d_{t+1}^{2}-\frac{\mu t(t-1)}{8}d_{t}^{2}\leq & t\langle\xi_{t}^{u},x_{*}-x_{t}\rangle+\frac{8\left(\left\Vert \xi_{t}^{u}\right\Vert ^{2}+\left\Vert \xi_{t}^{b}\right\Vert ^{2}\right)+t\left\Vert \xi_{t}^{b}\right\Vert ^{2}+4G^{2}}{\mu}\nonumber \\
\Rightarrow\E\left[t\Delta_{t}\right]+\frac{\mu(t+1)t}{8}\E\left[d_{t+1}^{2}\right]-\frac{\mu t(t-1)}{8}\E\left[d_{t}^{2}\right]\leq & \frac{\E\left[8\left(\left\Vert \xi_{t}^{u}\right\Vert ^{2}+\left\Vert \xi_{t}^{b}\right\Vert ^{2}\right)+t\left\Vert \xi_{t}^{b}\right\Vert ^{2}+4G^{2}\right]}{\mu}\nonumber \\
= & \frac{\E\left[8\left(\E_{t}\left[\left\Vert \xi_{t}^{u}\right\Vert ^{2}\right]+\left\Vert \xi_{t}^{b}\right\Vert ^{2}\right)+t\left\Vert \xi_{t}^{b}\right\Vert ^{2}+4G^{2}\right]}{\mu}.\label{eq:str-exp-2}
\end{align}

Next, we bound $\E_{t}[\|\xi_{t}^{u}\|^{2}]\le10\sigma^{p}M_{t}^{2-p}$,
$\|\xi_{t}^{b}\|^{2}\leq10\sigma^{p}M_{t}^{2-p}$ and $\|\xi_{t}^{b}\|\le2\sigma^{p}M_{t}^{1-p}$
by using Lemma \ref{lem:err-bound} to get
\begin{align}
 & \frac{8\left(\E_{t}\left[\left\Vert \xi_{t}^{u}\right\Vert ^{2}\right]+\left\Vert \xi_{t}^{b}\right\Vert ^{2}\right)+t\left\Vert \xi_{t}^{b}\right\Vert ^{2}+4G^{2}}{\mu}\nonumber \\
\leq & \frac{160\sigma^{p}M_{t}^{2-p}+4t\sigma^{2p}M_{t}^{2-2p}+4G^{2}}{\mu}\nonumber \\
= & \frac{160\sigma^{p}(2G\lor Mt^{\frac{1}{p}})^{2-p}+4t\sigma^{2p}(2G\lor Mt^{\frac{1}{p}})^{2-2p}+4G^{2}}{\mu}\nonumber \\
\leq & \frac{160\sigma^{p}M^{2-p}+4\sigma^{2p}M^{2-2p}}{\mu}t^{\frac{2}{p}-1}+\frac{320\sigma^{p}G^{2-p}+4G^{2}}{\mu}.\label{eq:str-exp-3}
\end{align}
Combining (\ref{eq:str-exp-2}) and (\ref{eq:str-exp-3}) and summing
up from $t=1$ to $\tau$, we have
\begin{align*}
 & \frac{\mu(\tau+1)\tau}{8}\E\left[d_{\tau+1}^{2}\right]+\sum_{t=1}^{\tau}\E\left[t\Delta_{t}\right]\\
\leq & \frac{160\sigma^{p}M^{2-p}+4\sigma^{2p}M^{2-2p}}{\mu}t^{\frac{2}{p}-1}+\frac{320\sigma^{p}G^{2-p}+4G^{2}}{\mu}\\
\leq & \frac{160\sigma^{p}M^{2-p}+4\sigma^{2p}M^{2-2p}}{\mu}\cdot\frac{p}{2}((\tau+1)^{\frac{2}{p}}-1)+\frac{320\sigma^{p}G^{2-p}+4G^{2}}{\mu}\tau\\
\leq & \frac{320\sigma^{p}M^{2-p}+8\sigma^{2p}M^{2-2p}}{\mu}\tau^{\frac{2}{p}}+\frac{320\sigma^{p}G^{2-p}+4G^{2}}{\mu}\tau\\
\leq & \frac{320\sigma^{p}M^{2-p}+8\sigma^{2p}M^{2-2p}}{\mu}\tau^{\frac{2}{p}}+\frac{320\sigma^{2}+324G^{2}}{\mu}\tau.
\end{align*}

Finally, choosing $\tau=T$ and using $\sum_{t=1}^{T}t\E\left[\Delta_{t}\right]\geq\frac{T(T+1)}{2}\E\left[F(\bar{x}_{T})-F(x_{*})\right]$
by the convexity of $F$ and $\bar{x}_{T}=\frac{2}{T(T+1)}\sum_{t=1}^{T}x_{t}$,
we conclude
\begin{align*}
\E\left[F(\bar{x}_{T})-F(x_{*})\right] & \leq O\left(\frac{G^{2}+\sigma^{2}}{\mu T}+\frac{\sigma^{p}M^{2-p}+\sigma^{2p}M^{2-2p}}{\mu T^{\frac{2(p-1)}{p}}}\right);\\
\E\left[\left\Vert x_{T+1}-x_{*}\right\Vert ^{2}\right] & \leq O\left(\frac{G^{2}+\sigma^{2}}{\mu^{2}T}+\frac{\sigma^{p}M^{2-p}+\sigma^{2p}M^{2-2p}}{\mu^{2}T^{\frac{2(p-1)}{p}}}\right).
\end{align*}
\end{proof}

\end{document}